\newtheorem{theorem}{Theorem}[section]
\newtheorem{lemma}[theorem]{Lemma}
\newtheorem{definition}[theorem]{Definition}
\newtheorem{corollary}[theorem]{Corollary}
\newtheorem{remark}[theorem]{Remark}
\newtheorem*{notation*}{Notation}
\numberwithin{equation}{section}
\begin{document}
\title[Fenchel-Willmore and Sobolev inequalities]{Fenchel-Willmore and Sobolev-type inequalities for submanifolds in non-negatively curved manifolds }

\author[M. Ji]{Meng Ji}
\address{School of Mathematics and Applied Statistics,
University of Wollongong\\
NSW 2522, Australia}
\email{\href{mailto:mengj@uow. edu.au}{mengj@uow.edu.au}}

\author[K.-K. Kwong]{Kwok-Kun Kwong}
\address{School of Mathematics and Applied Statistics,
University of Wollongong\\
NSW 2522, Australia}
\email{\href{mailto:kwongk@uow.edu.au}{kwongk@uow.edu.au}}
\keywords { Fenchel-Willmore inequality, logarithmic Sobolev inequality, Sobolev inequality, submanifold, mean curvature, non-negative sectional curvature}

\subjclass[2000]{53E10, 53A07 \and 53C42}


\begin{abstract}
In this paper, we uncover a novel connection between the Fenchel-Willmore inequality and a new logarithmic Sobolev inequality for mean-convex submanifolds immersed in non-negatively curved manifolds with Euclidean volume growth. Building on this connection, we establish extensions of the Fenchel-Willmore inequality to submanifolds with boundary and to complete non-compact submanifolds. In addition, we derive a sharp Sobolev-type inequality for submanifolds in the same setting. These Sobolev-type inequalities admit a number of applications, including topological consequences in the surface case.
\end{abstract}

\maketitle

\section{Introduction}
In recent decades, the study of geometric and analytic inequalities for submanifolds has become a prominent theme in differential geometry and geometric analysis. These inequalities provide powerful tools for connecting analytic properties with the intrinsic and extrinsic geometry of submanifolds, and they often serve as bridges between geometry, analysis, and topology.

Notable examples of such inequalities include the Sobolev and logarithmic Sobolev inequalities, which connect curvature conditions to analytic properties on submanifolds \cite{Brendle2022, Ecker2000, michael1973Sobolev}; the isoperimetric, Minkowski and Alexandrov-Fenchel inequalities, which relate surface area, enclosed volume, and (higher order) mean curvature integrals \cite{brendle2021isoperimetric, BrendleHungWang2016, GL09}; and the Fenchel-Willmore inequality, which provides a lower bound for an integral of a power of the mean curvature in terms of the ambient geometry \cite{AFM20, Chen1971, Fenchel29, Willmore68}.

There has been sustained effort devoted to taking inequalities such as those described above, originally proved in the Euclidean setting, and extending them to the Riemannian setting under curvature conditions.

For instance, in a recent paper \cite{JiKwong2025}, the authors proved that for a complete non-compact Riemannian manifold $(M, g)$ of dimension $n+m$ with nonnegative $k$-Ricci curvature (for some $k=k(m, n)$) and positive asymptotic volume ratio
$\displaystyle \theta:=\lim_{r \to \infty} \frac{|B(p, r)|}{|\mathbb{B}^{n+m}| r^{n+m}}$,
every closed $n$-dimensional submanifold $\Sigma$ immersed in $M$ satisfies the sharp Fenchel-Willmore inequality
\begin{equation}\label{ineq AFM}
\int_{\Sigma} |\sigma|^{n} \ge \theta |\mathbb{S}^{n}|,
\end{equation}
where $\sigma=\tfrac{1}{n}\, \mathrm{tr}\, \mathrm{II}$ is the mean curvature vector of $\Sigma$, with $\mathrm{II}$ denoting the second fundamental form. For more details, please see \cite[Theorem 1.3]{JiKwong2025}.
This result generalizes the relatively recent work of Agostiniani, Fogagnolo, and Mazzieri \cite{AFM20}, as well as the earlier foundational contributions of Chen \cite{Chen1971}, Fenchel \cite{Fenchel29}, and Willmore \cite{Willmore68}.

The inequality \eqref{ineq AFM} naturally gives rise to several further questions:\\
1. Can a version of the Fenchel-Willmore inequality be established for submanifolds with boundary?\\
2. What about submanifolds that are complete but non-compact?\\
3. Does the Fenchel-Willmore inequality admit an analytic counterpart? It is well known that the classical isoperimetric inequality is equivalent to the Sobolev inequality \cite[II.2]{Chavel2001}, highlighting the link between geometric and analytic aspects. In light of this, it is natural to ask whether the Fenchel-Willmore inequality has a similar interpretation in terms of an analytic inequality.

In the first part of this paper, we address the questions raised above by establishing new Fenchel-Willmore type inequalities for submanifolds that may be non-compact or may have boundary, and we further demonstrate that these inequalities can be derived from a logarithmic Sobolev type inequality for submanifolds.

Our first result extends the classical setting to compact submanifolds with boundary.

\begin{theorem}\label{thm fenchel willmore}
Let $n, m \in \mathbb{N}$, and $(M, g)$ be a complete non-compact Riemannian manifold of dimension $n+m$ with nonnegative sectional curvature and positive asymptotic volume ratio $\theta$. Suppose that $\Sigma$ is a compact $n$-dimensional submanifold immersed in $M$ (possibly with boundary $\partial \Sigma$) such that the normalized mean curvature vector $\sigma$ of $\Sigma$ is nowhere vanishing. Then
\begin{align}\label{ineq Fenchel-Willmore nonneg}
\int_{\Sigma}|\sigma|^n \ge \theta C_{n, m} e^{-\frac{|\partial \Sigma|}{\int_\Sigma |\sigma|}},
\end{align}
where
\begin{equation}\label{eq C nm}
C_{n, m}=\begin{cases}
\left|\mathbb{S}^n\right|\quad & \text{if }{m\le 3}\\
(n+1)\frac{\left|\mathbb{S}^{n+m-1}\right|}{\left|\mathbb{S}^{m-2}\right|}& \text{if }m{>} 3.
\end{cases}
\end{equation}
If {$m\le 3$}, the equality holds if and only if $\Sigma$ is {connected}, umbilical and with no boundary, and
$$
\int_{\Sigma} |\sigma|^n= \theta {|\mathbb S^n|}.
$$
\end{theorem}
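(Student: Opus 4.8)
The plan is to obtain \eqref{ineq Fenchel-Willmore nonneg} as the $f\equiv1$ specialization of a logarithmic Sobolev-type inequality for $\Sigma$, which I would in turn prove by an Alexandrov--Bakelman--Pucci (ABP) argument built on the normal exponential map of $\Sigma$ in $M$, in the spirit of Brendle's proofs of the Michael--Simon and logarithmic Sobolev inequalities. First I would fix the transport potential: since $\Sigma$ is compact and $\sigma$ is nowhere vanishing, solve the Neumann problem $\operatorname{div}_\Sigma(\nabla_\Sigma u)=n|\sigma|-\beta$ on $\Sigma$ with $\langle\nabla_\Sigma u,\eta\rangle=1$ on $\partial\Sigma$, where $\eta$ is the outward unit conormal; the compatibility condition forces $\beta=\big(n\int_\Sigma|\sigma|-|\partial\Sigma|\big)/|\Sigma|$, and standard elliptic theory gives $u\in C^{2,\alpha}(\Sigma)$. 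The Neumann datum $1$ makes the total boundary flux equal to $|\partial\Sigma|$, which is precisely the quantity that will be routed into the factor $e^{-|\partial\Sigma|/\int_\Sigma|\sigma|}$.

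Next I would introduce, for $r>0$, the map $\Phi_r(x,\xi)=\exp_x\!\big(r(\nabla_\Sigma u(x)+\xi)\big)$ with $x$ in the interior of $\Sigma$ and $\xi\in N_x\Sigma$ (the hypothesis $\sigma\neq0$ enters when rescaling the normal fibre and in normalizing the exponent), and let $U_r$ be the contact set of those pairs with $|\nabla_\Sigma u(x)|^2+|\xi|^2<1$ for which $z\mapsto\tfrac12 d(\Phi_r(x,\xi),z)^2-r\,u(z)$ attains its minimum over $\Sigma$ at the interior point $z=x$; the Neumann condition guarantees every such minimum is interior, so $\partial\Sigma$ enters the estimate only through $\beta$. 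A triangle-inequality argument would show $B\big(q,(1-o(1))r\big)\subseteq\Phi_r(U_r)$ for a fixed basepoint $q$ and $r$ large, so by Bishop volume comparison (nonnegative sectional curvature implies nonnegative Ricci) and the definition of $\theta$,
\begin{equation*}
\theta\,|\mathbb{B}^{n+m}|\,(1-o(1))^{n+m}r^{n+m}\ \le\ \big|B\big(q,(1-o(1))r\big)\big|\ \le\ \int_{U_r}\big|\det D\Phi_r\big|.
\end{equation*}

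Third — the heart of the matter — I would estimate $\det D\Phi_r$ on $U_r$. The second-order contact inequality bounds the tangential Hessian of $u$, and nonnegative sectional curvature bounds the normal Jacobi-field factors of $\det D\Phi_r$ by their flat counterparts, so $\det D\Phi_r$ splits into a tangential determinant whose trace is governed by $\Delta_\Sigma u=n|\sigma|-\beta$ (plus the term $n\langle\sigma,\xi\rangle$ coming from differentiating the geodesic in the normal direction) and a product of normal factors affine in $|\xi|$. Applying the arithmetic--geometric mean inequality to the tangential eigenvalues, integrating the normal factors over the fibre sphere $\mathbb{S}^{m-1}$, letting $r\to\infty$ and dividing by $r^{n+m}$ gives $\theta\,C_{n,m}\le\int_\Sigma|\sigma|^n$ when $\partial\Sigma=\varnothing$; in general the constant $\beta$ survives as a term $-\beta/n$ inside the bracket, and absorbing it via the elementary convexity estimate $(a-b)^n\le a^n e^{-nb/a}$ together with Jensen's inequality against the probability measure $|\sigma|^n\,d\mu_\Sigma/\!\int_\Sigma|\sigma|^n$ converts the polynomial deficit into the exponential $e^{-|\partial\Sigma|/\int_\Sigma|\sigma|}$, yielding \eqref{ineq Fenchel-Willmore nonneg}. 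The fibre integration over $\mathbb{S}^{m-1}$ is where the dichotomy in $C_{n,m}$ appears: a crude bound on the normal factors gives $(n+1)|\mathbb{S}^{n+m-1}|/|\mathbb{S}^{m-2}|$ for every $m$, whereas for $m\le3$ a sharper treatment of those factors recovers the optimal $|\mathbb{S}^n|$.

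Finally, for the equality case with $m\le3$: equality forces equality at every step, so Bishop comparison is saturated (the transported region of $M$ is flat), the arithmetic--geometric mean inequality is saturated (so $\nabla_\Sigma^2 u$ is pure trace on the contact set, forcing $\Sigma$ umbilical with $|\sigma|$ constant), the convexity/Jensen step is saturated and the boundary flux must vanish (so $\partial\Sigma=\varnothing$ — in particular equality cannot occur in the presence of boundary), and the covering has multiplicity one (so $\Sigma$ is connected); from the closed umbilical configuration one then obtains $\int_\Sigma|\sigma|^n=\theta|\mathbb{S}^n|$, and the converse is immediate. I expect the main obstacle to be the third step: controlling $\det D\Phi_r$ in higher codimension precisely enough both to route the boundary flux into the exponent with the correct normalization and to pin down the sharp constant exactly for $m\le3$. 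A secondary technical difficulty is establishing the covering $B\big(q,(1-o(1))r\big)\subseteq\Phi_r(U_r)$ with error terms controlled uniformly in $r$ using only the volume-growth hypothesis on $M$ rather than pointwise curvature decay.
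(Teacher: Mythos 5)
Your high-level plan — derive \eqref{ineq Fenchel-Willmore nonneg} as the $f\equiv 1$ specialization of a logarithmic Sobolev inequality for $\Sigma$, itself proved by an ABP/transport-map argument on the normal exponential map — is precisely the paper's approach; plugging $f\equiv1$ into \eqref{log sob intro} gives \eqref{ineq Fenchel-Willmore nonneg} by pure algebra. However, the transport PDE you write down is not the one the paper uses, and the difference is fatal to the computation. The paper solves
\begin{equation*}
\operatorname{div}_\Sigma\!\left(f\nabla^\Sigma u\right)=\tfrac{n}{n+1}f|\sigma|\log f-\tfrac{1}{2n}\tfrac{|\nabla^\Sigma f|^2}{f|\sigma|},\qquad \langle\nabla^\Sigma u,\eta\rangle=1\ \text{on } \partial\Sigma,
\end{equation*}
which for constant $f=c$ reduces to $\Delta_\Sigma u=\tfrac{n}{n+1}|\sigma|\log c$. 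Crucially, the right-hand side is a \emph{constant multiple of} $|\sigma|$, not ``$n|\sigma|$ minus a constant''. Using $\log\lambda\le\lambda-1$ and $\sqrt{1-\theta}\le1-\theta/2$, this produces the pointwise bound $\Delta_\Sigma u\le n|\sigma|\bigl(f^{1/(n+1)}-\sqrt{1-|\nabla^\Sigma u|^2}\bigr)$, hence $\operatorname{tr}A\le n|\sigma|\bigl(f^{1/(n+1)}-\sqrt{1-|\nabla^\Sigma u|^2}-t\bigr)$. Because the $t$-integral in the ABP step runs from $-\sqrt{1-|\nabla u|^2}$ (the domain constraint) to $f^{1/(n+1)}-\sqrt{1-|\nabla u|^2}+\tfrac{1}{r|\sigma|}$ (the positivity constraint), the $\sqrt{1-|\nabla u|^2}$ terms cancel from the interval length, and the $t$-integral of $r^m\bigl(1+r|\sigma|(\cdots-t)\bigr)^n$ equals $\tfrac{r^{m-1}}{(n+1)|\sigma|}\bigl(1+r|\sigma|f^{1/(n+1)}\bigr)^{n+1}$, which after $r\to\infty$ collapses to $\tfrac{1}{n+1}f|\sigma|^n$. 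This is exactly where the exponential factor is manufactured: $\log c$ in the PDE becomes $c^{1/(n+1)}$ in the Jacobian bound, which raised to the power $n+1$ in the $t$-integral gives back $c$.

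Your PDE $\Delta_\Sigma u=n|\sigma|-\beta$, $\beta=\bigl(n\int_\Sigma|\sigma|-|\partial\Sigma|\bigr)/|\Sigma|$, destroys this structure. Now $\operatorname{tr}A=n|\sigma|(1-t)-\beta$ cannot be written as $n|\sigma|\bigl(\text{const}(x)-t\bigr)$ in a way that cancels the lower endpoint $-\sqrt{1-|\nabla^\Sigma u|^2}$; carrying out the $t$-integral and sending $r\to\infty$ you instead get $\int_\Sigma\tfrac{\bigl(|\sigma|(1+\sqrt{1-|\nabla u|^2})-\beta/n\bigr)^{n+1}}{(n+1)|\sigma|}$, which depends on $|\nabla u|$ and does not reduce to $|\sigma|^n$ times an explicit constant. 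Your proposed endgame does not repair this: the elementary estimate $(a-b)^n\le a^ne^{-nb/a}$ gives an \emph{upper} bound on the quantity your ABP argument bounds from \emph{below} by $\theta C_{n,m}$, and Jensen (convexity of $e^x$) then produces a lower bound on the same quantity — the inequality chain runs in the wrong direction, and you cannot extract a lower bound on $\int_\Sigma|\sigma|^n$. In short, you need the specific $|\sigma|\cdot\log f$ structure in the PDE so that $\log\lambda\le\lambda-1$ converts the linear-in-$\log c$ normalization into the $c^{1/(n+1)}$ factor; a Poisson equation with constant deficit cannot do this. A secondary point: for the constant $C_{n,m}$ in the regime $m\le3$, the paper does not give a ``sharper treatment of the normal factors''; it simply takes the isometric product with $\mathbb{R}^{3-m}$ to reduce to codimension $m=3$, and computes that $C_{n,3}=|\mathbb{S}^n|$.
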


In general, for a non-compact submanifold $\Sigma$, no non-trivial lower bound for $\int_{\Sigma}|\sigma|^n$ can be obtained without additional assumptions, since submanifolds that are minimal or close to minimal provide counterexamples. Despite this obstruction, we establish two new Fenchel-Willmore type inequalities for complete non-compact submanifolds. The first connects the Fenchel-Willmore integral $\int_{\Sigma}|\sigma|^n$ to the topology of $\Sigma$ via the Cohn-Vossen deficit, while the second links it to the isoperimetric constant of $\Sigma$. To the best of our knowledge, such inequalities have not been previously formulated, and they represent the first extensions of the Fenchel-Willmore inequality to the non-compact setting.

\begin{theorem}\label{thm non-compact fenchel willmore}
Let $(M^{2+m}, g)$ be as in Theorem \ref{thm fenchel willmore}, but now assume $\Sigma$ is a complete non-compact surface immersed in $M$, with $\sigma$ nowhere vanishing. Suppose
\begin{enumerate}
\item $\limsup_{r \to \infty} \frac{1}{r} \int_{B_r} |\sigma| = C > 0$, where $B_r$ is the metric ball of radius $r$ (by the induced metric on $\Sigma$) centered at a fixed point of $\Sigma$.
\item $\int_\Sigma K^- < \infty$, where $K^-$ is the negative part of the Gaussian curvature of $\Sigma$.
\end{enumerate}
Then both $\chi(\Sigma)$ and $\int_\Sigma K$ are finite, and
\begin{equation}\label{ineq non-compact willmore}
\int_{\Sigma} |\sigma|^2 \ge \theta C_{2, m} \exp\left(-\frac{2\pi \chi(\Sigma) - \int_\Sigma K}{C} \right),
\end{equation}
where $C_{2, m}$ is given in \eqref{eq C nm}.
\end{theorem}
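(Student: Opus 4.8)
The plan is to exhaust $\Sigma$ by the compact sublevel sets of its intrinsic distance function, apply Theorem~\ref{thm fenchel willmore} to each such set (which is a compact surface with boundary), and then pass to the limit. First, the finiteness claims: since $\int_\Sigma K^- < \infty$ by hypothesis (2), Huber's theorem shows that $\Sigma$ is finitely connected, i.e.\ homeomorphic to a closed surface with finitely many points removed, so $\chi(\Sigma)$ is finite. Combined with the Cohn--Vossen inequality $\int_\Sigma K \le 2\pi\chi(\Sigma)$ this forces $\int_\Sigma K^+ < \infty$ as well, hence $K \in L^1(\Sigma)$, the total curvature $\int_\Sigma K$ is finite, and the Cohn--Vossen deficit $\delta := 2\pi\chi(\Sigma) - \int_\Sigma K$ is a well-defined nonnegative number.

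Fix $p \in \Sigma$, write $\rho = \operatorname{dist}_\Sigma(p, \cdot)$, $B_r = \{\rho < r\}$, and $L(r) = \mathcal{H}^1(\partial B_r)$. The key analytic input is the classical asymptotic length formula for complete open surfaces of finite total curvature (Cohn--Vossen, Fiala, Hartman, Shiohama; see the monograph of Shiohama--Shioya--Tanaka): since $\Sigma$ has finite topology and finite total curvature,
\[
\lim_{r \to \infty} \frac{L(r)}{r} \ =\ 2\pi \chi(\Sigma) - \int_\Sigma K \ =\ \delta .
\]
Heuristically this comes from integrating the first-variation/Gauss--Bonnet identity $L'(r) = 2\pi\chi(B_r) - \int_{B_r} K$, valid for a.e.\ $r$ away from the cut locus, together with $\chi(B_r) = \chi(\Sigma)$ for all large $r$ and $\int_{B_r} K \to \int_\Sigma K$ as $r \to \infty$ by dominated convergence.

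For a.e.\ $r > 0$ the closure $\overline{B_r}$ is a compact surface immersed in $M$, possibly with (Lipschitz) boundary $\partial B_r$ of length $L(r)$, on which $\sigma$ is nowhere vanishing; thus Theorem~\ref{thm fenchel willmore} gives
\[
\int_{B_r} |\sigma|^2 \ \ge\ \theta\, C_{2,m}\, \exp\!\left( -\frac{L(r)}{\int_{B_r} |\sigma|} \right).
\]
By hypothesis (1) choose regular values $r_j \to \infty$ with $\frac{1}{r_j}\int_{B_{r_j}} |\sigma| \to C$; since $\sigma$ never vanishes, $\int_{B_{r_j}}|\sigma| > 0$, and
\[
\frac{L(r_j)}{\int_{B_{r_j}} |\sigma|} \ =\ \frac{L(r_j)/r_j}{\frac{1}{r_j}\int_{B_{r_j}} |\sigma|} \ \longrightarrow\ \frac{\delta}{C}.
\]
On the other hand $\int_{B_{r_j}} |\sigma|^2 \to \int_\Sigma |\sigma|^2$ by monotone convergence. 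Letting $j \to \infty$ in the previous display yields
\[
\int_\Sigma |\sigma|^2 \ \ge\ \theta\, C_{2,m}\, e^{-\delta/C} \ =\ \theta\, C_{2,m}\, \exp\!\left( -\frac{2\pi\chi(\Sigma) - \int_\Sigma K}{C} \right),
\]
which is \eqref{ineq non-compact willmore}. (If $C = +\infty$ the exponent tends to $0$ and one recovers $\int_\Sigma|\sigma|^2 \ge \theta\, C_{2,m}$; if $\int_\Sigma|\sigma|^2 = \infty$ the inequality is trivial.)

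The only genuinely non-routine ingredient is the linear growth rate $L(r)/r \to \delta$ for the lengths of the geodesic circles of $\Sigma$: this rests on the structure theory of complete surfaces with finite total curvature (Huber, Cohn--Vossen, Fiala, Hartman, Shiohama) and on the careful treatment of the cut locus in the Gauss--Bonnet argument. Once this asymptotic is in hand, the remainder is a clean exhaustion argument layered on top of Theorem~\ref{thm fenchel willmore}; one could alternatively bypass Theorem~\ref{thm fenchel willmore} and argue directly from the logarithmic Sobolev inequality underlying it, but the exhaustion route is shorter.
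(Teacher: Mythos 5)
Your proposal is correct and follows essentially the same route as the paper: exhaust $\Sigma$ by geodesic balls $B_r$, invoke the Fiala--Hartman--Shiohama asymptotic $\lim_{r\to\infty}|\partial B_r|/r = 2\pi\chi(\Sigma)-\int_\Sigma K$, pick a sequence of good radii realizing the $\limsup$ in hypothesis (1), apply Theorem~\ref{thm fenchel willmore} to each $\overline{B_{r_j}}$, and pass to the limit by monotone convergence. The only difference is cosmetic: you derive the finiteness of $\chi(\Sigma)$ and $\int_\Sigma K$ from Huber's theorem and the Cohn--Vossen inequality, while the paper cites White's 1987 theorem and Hartman for the almost-everywhere regularity of $\partial B_r$; both give the same conclusions.
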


The condition on $\frac{1}{r} \int_{B_r}|\sigma|$ rules out the possibility that $\Sigma$ is close to a minimal surface. Furthermore, by the Cohn-Vossen theorem \cite{CohnVossen1935}, the quantity $2 \pi \chi(\Sigma)-\int_{\Sigma} K$, called the Cohn-Vossen deficit, that appears on the RHS of \eqref{ineq non-compact willmore} is non-negative.

If $\Sigma$ is a higher-dimensional non-compact submanifold, one can instead obtain a lower bound for the Fenchel-Willmore integral $\int_\Sigma |\sigma|^n$ in terms of the isoperimetric constant of $\Sigma$ (Definition \ref{def isop const}), provided that this constant is sufficiently small. Interestingly, the proof relies on two types of Sobolev inequalities: a logarithmic Sobolev inequality established in this work, which implies Theorem \ref{thm fenchel willmore} (as explained below), and a Michael-Simon type Sobolev inequality \eqref{brendle2} due to Brendle.

\begin{theorem}\label{thm noncopct willmore higher dim}
Let $(M^{n+m}, g)$ be as in Theorem \ref{thm fenchel willmore}, but now assume $\Sigma$ is a complete $n$-dimensional submanifold immersed in $M$ ($n\ge2$), with $\sigma$ nowhere vanishing. If the isoperimetric constant of $\Sigma$ satisfies $C_{\mathrm {iso }}(\Sigma)<n\left(\theta K_{n, m}\right)^{\frac{1}{n}}$,
then
$$ \int_{\Sigma} |\sigma|^n \ge \theta C_{n, m} \exp\left(-\frac{n C_{\mathrm{iso}}(\Sigma)}{n\left(\theta K_{n, m}\right)^{\frac{1}{n}} -C_{\mathrm{iso}}(\Sigma)} \right). $$
Here, $K_{n, m} = \dfrac{|\mathbb{S}^{n+m-1}|}{|\mathbb{S}^{m-1}|}$ and $C_{n, m}$ is given in \eqref{eq C nm}.

In particular, if $C_{\mathrm{iso}}(\Sigma)=0$, then the classical Fenchel-Willmore inequality $\int_\Sigma|\sigma|^n \ge \theta C_{n, m}$ holds.
\end{theorem}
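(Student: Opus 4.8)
The strategy is to exhaust $\Sigma$ by compact subdomains, apply Theorem~\ref{thm fenchel willmore} on each of them, and use Brendle's Michael--Simon--Sobolev inequality \eqref{brendle2} to convert the resulting boundary term into a quantity controlled by the isoperimetric constant. Write $a:=n(\theta K_{n,m})^{1/n}$; by hypothesis $C_{\mathrm{iso}}(\Sigma)<a$, so I may fix $\epsilon>0$ with $C_{\mathrm{iso}}(\Sigma)+\epsilon<a$. Using Definition~\ref{def isop const}, I would choose an exhaustion $\{\Omega_j\}$ of $\Sigma$ by precompact open sets with smooth boundary (for instance regular sublevel sets of a proper exhaustion function, via Sard's theorem) such that $|\partial\Omega_j|\le\big(C_{\mathrm{iso}}(\Sigma)+\epsilon\big)|\Omega_j|^{\frac{n-1}{n}}$ for all large $j$. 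Each $\Omega_j$ is then a compact submanifold with boundary, still immersed and with $\sigma$ nowhere vanishing, so Theorem~\ref{thm fenchel willmore} and \eqref{brendle2} both apply to it.

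The crucial step is a lower bound for $\int_{\Omega_j}|\sigma|$. Applying \eqref{brendle2} on $\Omega_j$ with the constant test function $f\equiv1$ — and accounting for the normalization $\sigma=\tfrac1n\,\mathrm{tr}\,\mathrm{II}$ — gives an inequality of the form
\[
n\int_{\Omega_j}|\sigma|+|\partial\Omega_j|\ \ge\ a\,|\Omega_j|^{\frac{n-1}{n}},
\]
whence $\int_{\Omega_j}|\sigma|\ge\tfrac1n\big(a-C_{\mathrm{iso}}(\Sigma)-\epsilon\big)|\Omega_j|^{\frac{n-1}{n}}>0$ for $j$ large. Dividing the bound for $|\partial\Omega_j|$ by this, the $|\Omega_j|^{(n-1)/n}$ factors cancel and
\[
\frac{|\partial\Omega_j|}{\int_{\Omega_j}|\sigma|}\ \le\ \frac{n\big(C_{\mathrm{iso}}(\Sigma)+\epsilon\big)}{a-C_{\mathrm{iso}}(\Sigma)-\epsilon}.
\]

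On the other hand, Theorem~\ref{thm fenchel willmore} applied to $\Omega_j$ reads $\int_{\Omega_j}|\sigma|^n\ge\theta C_{n,m}\exp\!\big(-|\partial\Omega_j|/\!\int_{\Omega_j}|\sigma|\big)$, and $\int_\Sigma|\sigma|^n\ge\int_{\Omega_j}|\sigma|^n$ since $\Omega_j\subseteq\Sigma$. Combining these with the previous display gives, for all large $j$,
\[
\int_\Sigma|\sigma|^n\ \ge\ \theta C_{n,m}\exp\!\left(-\frac{n\big(C_{\mathrm{iso}}(\Sigma)+\epsilon\big)}{a-C_{\mathrm{iso}}(\Sigma)-\epsilon}\right).
\]
Letting $\epsilon\downarrow0$ (the right-hand side being a continuous, monotone function of $\epsilon$) yields the asserted bound; when $C_{\mathrm{iso}}(\Sigma)=0$ the exponential factor is $1$, recovering $\int_\Sigma|\sigma|^n\ge\theta C_{n,m}$.

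Once Theorem~\ref{thm fenchel willmore} and \eqref{brendle2} are in hand the proof is short, so I do not expect a serious obstacle; the point that needs attention is the interface with Definition~\ref{def isop const}: verifying that domains realizing the isoperimetric constant up to $\epsilon$ can be taken compact with smooth boundary so that both cited inequalities genuinely apply on each $\Omega_j$, and keeping straight that the boundary term $|\partial\Omega_j|$ must be used as a \emph{lower} bound for $\int_{\Omega_j}|\sigma|$ through \eqref{brendle2} and as an \emph{upper} bound inside the exponent coming from Theorem~\ref{thm fenchel willmore}. (If $\int_\Sigma|\sigma|^n=\infty$ the statement is trivial, so this case needs no separate treatment.)
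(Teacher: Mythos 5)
Your proposal is correct and follows essentially the same route as the paper: test \eqref{brendle2} with $f\equiv 1$ on compact subdomains to lower-bound $\int_{\Omega_j}|\sigma|$ by a multiple of $|\Omega_j|^{(n-1)/n}$, feed the resulting bound on $|\partial\Omega_j|/\int_{\Omega_j}|\sigma|$ into Theorem~\ref{thm fenchel willmore}, take a minimizing sequence for the isoperimetric ratio, and pass to the limit. One small slip in wording: you do not need (and in general cannot arrange) the $\Omega_j$ to be an \emph{exhaustion} of $\Sigma$ while also realizing the isoperimetric infimum -- an arbitrary minimizing sequence of bounded $C^1$ domains, as in the paper, suffices, since all your argument uses is $\Omega_j\subseteq\Sigma$ together with the isoperimetric bound.
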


We now return to the earlier question of whether the Fenchel-Willmore inequality admits an analytic counterpart, in analogy with the classical equivalence between the isoperimetric and Sobolev inequalities. Remarkably, it turns out that the Fenchel-Willmore type inequality in Theorem \ref{thm fenchel willmore} can be derived as a corollary of a logarithmic Sobolev type inequality.
This shows a new connection between a Fenchel-Willmore type inequality and a logarithmic Sobolev inequality on submanifolds which involves the norm of the mean curvature vector.

Let us now state the sharp logarithmic Sobolev inequality that underlies this relation, which we establish for submanifolds (possibly with boundary) immersed in ambient manifolds of arbitrary codimension with nonnegative sectional curvature.

\begin{theorem}\label{thm log sob ineq}
Let $n, m \in \mathbb{N}$, and $(M, g)$ be a complete non-compact Riemannian manifold of dimension $n+m$ with nonnegative sectional curvature and asymptotic volume ratio $\theta>0$. Suppose that $\Sigma$ is a compact $n$-dimensional submanifold immersed in $M$ (possibly with boundary $\partial \Sigma$) such that the normalized mean curvature vector $\sigma$ of $\Sigma$ is nowhere vanishing. Let $f$ is a positive smooth function on $\Sigma$.
Then
\begin{equation}\label{log sob intro}
\int_{\Sigma} f|\sigma|\left(\log f+\log (\theta C_{n, m})\right)-\left(\int_{\Sigma} f|\sigma|\right) \log \left(\int_{\Sigma} f|\sigma|^n\right) \le \frac{n+1}{2 n^2} \int_{\Sigma} \frac{\left|\nabla^{\Sigma} f\right|^2}{f|\sigma|}+\int_{\partial \Sigma} f,
\end{equation}
where $C_{n, m}$ is given by \eqref{eq C nm}.
\end{theorem}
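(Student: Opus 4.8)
The plan is to carry out S.\ Brendle's Alexandrov--Bakelman--Pucci (ABP) / optimal-transport argument in the weighted setting, but to choose the source term of the governing elliptic equation so that the Jacobian estimate produces \emph{logarithmic} rather than power-type quantities. Informally, this is the submanifold counterpart of the mass-transport proof of the Euclidean Gaussian logarithmic Sobolev inequality: one transports the weighted measure $f|\sigma|\, d\mu$ (together with Gaussian weights on the normal fibres) to a Gaussian measure on $M$, rather than transporting $f^{n/(n-1)}d\mu$ to the uniform measure on a ball as in the Michael--Simon--Brendle Sobolev inequality.

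First I would set $b=\int_\Sigma f|\sigma|$ and solve a Neumann problem $\operatorname{div}_\Sigma(f\,\nabla^\Sigma u)=h$ on $\Sigma$ with $\partial_\nu u=1$ on $\partial\Sigma$, where $h$ is chosen so that (i) the compatibility condition $\int_\Sigma h=\int_{\partial\Sigma}f$ holds, which is what ultimately yields the unadorned boundary term $\int_{\partial\Sigma}f$ in \eqref{log sob intro}, and (ii) $h/f$ equals, modulo terms that vanish as a large scale parameter $r\to\infty$, a constant plus a multiple of $\log\bigl(f|\sigma|^n/\!\int_\Sigma f|\sigma|^n\bigr)$ (here $\sigma$ being nowhere vanishing is used to make sense of the source term and of the weight $1/|\sigma|$). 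Standard elliptic theory provides a smooth $u$. One then considers the map $\Phi(x,y)=\exp^M_x(\nabla^\Sigma u(x)+y)$, $x\in\Sigma$, $y\in N_x\Sigma$, weights the target by a Gaussian of width $\sim r$ about a fixed point, and shows via the usual ``touching from below'' argument (the Neumann condition forcing the relevant minima to be interior) that $\Phi$, restricted to the lower contact set $A\subseteq\Sigma$ where the tangential Hessian $\nabla^2_\Sigma u-\langle\mathrm{II},y\rangle$ is positive semidefinite, essentially covers $M$ with this weight. Since $(M,g)$ has Euclidean volume growth, the weighted volume of $M$ is asymptotic to $\theta\,(2\pi r^2)^{(n+m)/2}$, and this is how $\theta$ enters.

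On $A$ the differential $D\Phi$ splits into the tangential block $\nabla^2_\Sigma u-\langle\mathrm{II},y\rangle\ge 0$ and a normal block coming from $d\exp^M$; nonnegative sectional curvature bounds the latter from above by its Euclidean value up to the first focal point, and the focal-point analysis in the angular $(m-1)$ directions of the normal space is exactly what produces the dichotomy in \eqref{eq C nm} and the constant $C_{n,m}$ (the ``$n+1$'' in the $m>3$ case reflecting the $n$ tangential plus one radial normal direction). Rather than arithmetic--geometric mean in the form $\det\le(\operatorname{tr}/n)^n$, I would take logarithms of the transport (Monge--Amp\`ere-type) identity pointwise and use $\log\det\le\operatorname{tr}-n$ to linearize; integrating the resulting inequality against the source measure over $A$, carrying out the Gaussian integral in the normal fibres, and integrating by parts to move the derivatives from $u$ onto $f$, one arrives at an inequality of the shape of \eqref{log sob intro} but carrying an extra term $\int_\Sigma\langle\nabla^\Sigma f,\nabla^\Sigma u\rangle$ together with a term $\int_\Sigma f|\sigma|\,|\nabla^\Sigma u|^2$ (the latter with a favourable sign inherited from the Gaussian). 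Young's inequality applied to the first, with the constant optimized (rescaling $u$ if needed), converts it into $\tfrac{n+1}{2n^2}\int_\Sigma\frac{|\nabla^\Sigma f|^2}{f|\sigma|}$ while absorbing or discarding the second; letting $r\to\infty$ then gives \eqref{log sob intro}. Setting $f\equiv 1$ recovers Theorem \ref{thm fenchel willmore}, which is a useful consistency check on the normalization.

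The hard part is the bookkeeping required to make all constants sharp simultaneously: the source term $h$ and the Gaussian scale must be tuned so that, after logarithms and $\log\det\le\operatorname{tr}-n$, the $n$ tangential eigenvalues recombine precisely into $\log(f|\sigma|^n)$ normalized against $\int_\Sigma f|\sigma|^n$ and $\int_\Sigma f|\sigma|$; the Young step must reproduce exactly $\tfrac{n+1}{2n^2}$; and, in the large-codimension regime $m>3$, the focal-point estimate for $d\exp^M$ must be carried out sharply to yield $C_{n,m}=(n+1)|\mathbb{S}^{n+m-1}|/|\mathbb{S}^{m-2}|$. The remaining ingredients---solvability of the Neumann problem, the contact-set covering, and the volume asymptotics as $r\to\infty$---are by now standard in this circle of ideas.
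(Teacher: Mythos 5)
Your proposal captures the overall ABP/transport strategy (solve a Neumann problem, push forward via the normal exponential map, bound the Jacobian, take a large-scale limit) but diverges from the paper in several substantive ways, and some of your specific claims are incorrect.

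\emph{Gaussian versus annulus.} The paper does not weight the target by a Gaussian and does not transport to a Gaussian measure. It uses the scaled map $\Phi_r(x,y,t)=\exp_x(r(\nabla^\Sigma u+y+t\,\sigma/|\sigma|))$ and bounds the Lebesgue volume of the shell $\{\,p: \alpha r<d(x,p)<r \text{ for all } x\in\Sigma\,\}$ by the integral of $|\det D\Phi_r|$ over the contact set. The asymptotic volume ratio $\theta$ enters when one divides by $r^{n+m}$ and lets $r\to\infty$, and the constant $C_{n,m}$ comes from the second limit $\alpha\to 1$. This annulus device is not decorative: the constraint $\alpha^2<|\nabla^\Sigma u|^2+|y|^2+t^2<1$ localizes the admissible $(y,t)$ to a thin spherical shell $Y_{\alpha,x,t}$, and the estimate $|Y_{\alpha,x,t}|\le\frac{m-1}{2}|\mathbb B^{m-1}|(1-\alpha^2)$ is precisely what produces the codimension constant. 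With a global Gaussian weight this localization disappears, and it is not clear how one would recover the sharp constant in the normal directions; you would need new lemmas to replace Lemmas~\ref{lem 2.2} and~\ref{lem 2.3} in that weighted setting.

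\emph{Where the dichotomy in $C_{n,m}$ comes from.} You attribute the $m\le 3$ versus $m>3$ split to focal-point analysis for $d\exp^M$; this is not correct. In the paper the split comes from the elementary inequality $b^{(m-1)/2}-a^{(m-1)/2}\le\frac{m-1}{2}(b-a)$ for $0\le a\le b<1$, which is valid only for $m\ge 3$ and is used to bound the shell area $|Y_{\alpha,x,t}|$. For $m=1,2$ the paper raises the codimension to $3$ by replacing $M$ with $M\times\mathbb R^{3-m}$, which is why the constant stabilizes at $|\mathbb S^n|$ for $m\le 3$; the focal-point/Jacobi-field comparison (nonnegative sectional curvature, Lemma~\ref{lem 2.4}) is the same in all codimensions.

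\emph{Where the linearization happens.} You propose $\log\det\le\operatorname{tr}-n$ at the end, plus a Young inequality to get $\tfrac{n+1}{2n^2}$. In the paper, the linearization occurs much earlier, in Lemma~\ref{2.1}: the pointwise inequalities $\log\lambda\le\lambda-1$ and $\sqrt{1-\theta}\le 1-\theta/2$ turn the elliptic equation into $\Delta_\Sigma u\le n|\sigma|\bigl(f^{1/(n+1)}-\sqrt{1-|\nabla^\Sigma u|^2}\bigr)$, and a pointwise completing-the-square (not Young at the end) eliminates the cross term $\langle\nabla^\Sigma f,\nabla^\Sigma u\rangle$. The constant $\tfrac{n+1}{2n^2}$ is already encoded in the chosen \emph{normalization} of $f$, namely $\tfrac{n}{n+1}\int_\Sigma f|\sigma|\log f=\tfrac{1}{2n}\int_\Sigma\tfrac{|\nabla^\Sigma f|^2}{f|\sigma|}+\int_{\partial\Sigma}f$, which also furnishes the compatibility condition for the Neumann problem with source $\operatorname{div}_\Sigma(f\nabla^\Sigma u)=\tfrac{n}{n+1}f|\sigma|\log f-\tfrac{1}{2n}\tfrac{|\nabla^\Sigma f|^2}{f|\sigma|}$. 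The Jacobian bound is then obtained by the monotonicity-along-geodesics argument (the ratio $\det P(s)/[s^m(1+s\,|\sigma|\,(\cdots))^n]$ is nonincreasing), which uses concavity of $z\mapsto z/(1+z)$ on the eigenvalues, not $\log\det\le\operatorname{tr}-n$.

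\emph{Minor omission.} The paper first proves the inequality for connected $\Sigma$ and then handles disconnected $\Sigma$ with a separate algebraic lemma (Lemma~\ref{lem alg ineq}); this step does not appear in your outline but is needed for the statement as written.

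In short, the Gaussian-transport version you sketch is a genuinely different route; it might conceivably be made to work, but as written it does not reproduce the shell-area estimate that yields $C_{n,m}$, it misattributes the source of the dichotomy in $C_{n,m}$, and it places the linearization and Young step in the wrong spot. The paper's argument is the annulus variant with the normalization and completing-the-square built in at the start.
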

We have the following characterization of the equality case of \eqref{log sob intro}.
\begin{theorem}\label{thm log sob intro equality}
With the same assumptions and notations as in Theorem \ref{thm log sob ineq} and suppose that $m\le 3$, the equality in \eqref{log sob intro} holds if and only if
$f$ is constant, $\Sigma$ is umbilical and with no boundary, and
$$
\int_{\Sigma} |\sigma|^n= \theta |\mathbb{S}^{n}|.
$$
\end{theorem}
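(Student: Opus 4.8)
The plan is to handle the two implications separately. The reverse implication is a direct computation: if $f\equiv c$ is a positive constant, $\partial\Sigma=\emptyset$, and $\int_\Sigma|\sigma|^n=\theta|\mathbb S^n|$ --- which, since $m\le 3$, equals $\theta C_{n,m}$ --- then the left-hand side of \eqref{log sob intro} collapses to $c\big(\int_\Sigma|\sigma|\big)\log\frac{\theta C_{n,m}}{\int_\Sigma|\sigma|^n}=0$ (the two $\log c$ contributions cancel) and the right-hand side is $0$, so equality holds.

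For the forward implication, the key observation is that \emph{it is enough to prove that $f$ is constant.} Indeed, once $f\equiv c$, the equality in \eqref{log sob intro} becomes $c\big(\int_\Sigma|\sigma|\big)\log\frac{\theta C_{n,m}}{\int_\Sigma|\sigma|^n}=c\,|\partial\Sigma|$, which after dividing by $c\int_\Sigma|\sigma|$ and exponentiating is precisely the equality case of \eqref{ineq Fenchel-Willmore nonneg}; since $m\le 3$, the rigidity part of Theorem~\ref{thm fenchel willmore} then forces $\Sigma$ to be connected, umbilical, with $\partial\Sigma=\emptyset$, and $\int_\Sigma|\sigma|^n=\theta|\mathbb S^n|$. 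Thus everything reduces to showing that equality in \eqref{log sob intro} implies $\nabla^\Sigma f\equiv 0$.

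To prove $\nabla^\Sigma f\equiv 0$ I would revisit the proof of Theorem~\ref{thm log sob ineq}. Following Brendle's ABP scheme, that proof solves a Neumann problem $\mathrm{div}_\Sigma(f\,\nabla^\Sigma u)=(\text{prescribed right-hand side})$ on $\Sigma$, forms the transport map $\Phi(x,y)=\exp^M_x\!\big(\nabla^\Sigma u(x)+y\big)$ on a portion of the normal disk bundle, shows that $\Phi$ covers geodesic balls $B_r\subset M$ of all radii $r>0$, and then bounds $|B_r|$ above by $\int|\mathrm{Jac}\,\Phi|$ using: (a) the arithmetic--geometric-mean inequality applied to the eigenvalues of $\nabla^2_\Sigma u(x)+\mathrm{II}(x)\!\cdot\!y$; (b) a Jacobi-field comparison along the normal geodesics, which uses nonnegative sectional curvature; and (c) a completing-the-square (Young) estimate that produces the term $\frac{n+1}{2n^2}\int_\Sigma\frac{|\nabla^\Sigma f|^2}{f|\sigma|}$; the term $\int_{\partial\Sigma}f$ enters through the Neumann data, and the final lower bound for $|B_r|/r^{n+m}$ uses the asymptotic volume ratio $\theta$. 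If equality holds in \eqref{log sob intro}, each of these is an equality. Equality in (c) forces $\nabla^\Sigma u$ to be pointwise a (nonzero, explicit) multiple of $\nabla^\Sigma f$ wherever $\nabla^\Sigma f\ne 0$; equality in (a), compared across the different normal directions $y$, forces both $\nabla^2_\Sigma u=\lambda\,g$ and umbilicity $\mathrm{II}=\sigma\,g$ on the contact set; equality in (b) forces the ambient metric to be flat along the relevant normal geodesics; and sharpness of the covering forces the contact set to exhaust $\Sigma$. Letting the radius $r\to 0^+$, the contact set shrinks to the zero set of $\nabla^\Sigma u$, so this exhaustion forces $\nabla^\Sigma u\equiv 0$ on $\Sigma$; combining with the proportionality from (c) gives $\nabla^\Sigma f\equiv 0$, as desired.

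The step I expect to be the main obstacle is extracting $\nabla^\Sigma u\equiv 0$: equality in the Young inequality alone only yields $\nabla^\Sigma u\parallel\nabla^\Sigma f$, so one must carefully combine it with the exhaustion of the contact set and with the a priori bound $|\nabla^\Sigma u(x)|\le r$ that enters the covering argument, and then verify that these are incompatible with a nonconstant $u$ on the closed manifold $\Sigma$. A subsidiary technical issue is that the pointwise equalities (umbilicity, flatness, the gradient identity) are initially known only on the $r$-dependent contact sets, so one needs a continuity/unique-continuation argument to propagate them to all of $\Sigma$; and throughout one must keep track of the hypothesis $m\le 3$, which is exactly what makes $C_{n,m}=|\mathbb S^n|$ and hence aligns the resulting rigidity with that of Theorem~\ref{thm fenchel willmore}.
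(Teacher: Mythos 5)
Your reverse implication is fine, but the forward implication has several significant gaps, and the route you sketch does not match what the actual argument requires.

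First, the reduction to ``it suffices to prove $f$ is constant'' via the rigidity part of Theorem~\ref{thm fenchel willmore} is circular-in-spirit: the paper does not give a separate proof of Theorem~\ref{thm fenchel willmore}'s equality case --- that statement is obtained from Theorem~\ref{thm log sob ineq} by taking $f=1$, so its rigidity is exactly the $f\equiv 1$ instance of Theorem~\ref{thm log sob intro equality}. You are deferring the hard part (umbilicity, $\partial\Sigma=\emptyset$, the value of $\int_\Sigma|\sigma|^n$) to a statement whose only proof is the theorem you are trying to prove. Nothing is saved: the $f\equiv 1$ case is not appreciably easier.

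Second, and more seriously, the heart of the paper's proof is entirely absent from your sketch. You assert that ``if equality holds in \eqref{log sob intro}, each of these is an equality,'' but the inequality \eqref{log sob intro} is a single integrated estimate obtained after sending $r\to\infty$ and $\alpha\to 1$, and it is not automatic that equality there forces pointwise equality in the Jacobian comparison of Lemma~\ref{lem 2.4}. The paper proves this via Lemma~\ref{lem xyt=1}: for every $(x,y,t)$ on the boundary sphere $|\nabla^\Sigma u|^2+|y|^2+t^2=1$, the Jacobian $|\det D\Phi_r|$ attains the comparison value from below. The proof of that lemma is a nontrivial contradiction argument --- if the lower bound failed at some point, one shows (by carefully estimating the extra term $I(\alpha,r)$ and tracking the double limit $r\to\infty$, $\alpha\to 1$) that \eqref{(2.2')} would be strict. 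Your proposal does not identify this lemma, let alone prove it, and there is no obvious substitute.

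Third, once the pointwise Jacobian equality is known, the paper deduces $f\equiv 1$ and $\nabla^\Sigma u\equiv 0$ not from ``equality in the Young step $\Rightarrow\nabla^\Sigma u\parallel\nabla^\Sigma f$'' but from the fact that the two elementary inequalities $\log\lambda\le\lambda-1$ and $\sqrt{1-\theta}\le 1-\theta/2$ in Lemma~\ref{2.1} must be equalities, which directly force $f=1$ and $\nabla^\Sigma u=0$ (on the dense open set $\Omega$, hence everywhere by continuity). Your proposal's route through ``proportionality plus exhaustion of the contact set as $r\to 0^+$'' does not correspond to anything in the actual argument: the contact set $A_r$ lives in $\widetilde T^\perp\Sigma\times\mathbb R$, not in $\Sigma$; the radius $r$ is taken to $\infty$, not $0$; and the mechanism that controls $\nabla^\Sigma u$ is equality in $\sqrt{1-|\nabla^\Sigma u|^2}\le 1-\tfrac12|\nabla^\Sigma u|^2$, not a shrinking contact set. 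The claim that equality in the Jacobi-field comparison ``forces the ambient metric to be flat along the relevant normal geodesics'' is also not used or needed in the paper's proof. As written, your proposal would not yield a correct argument without first supplying the missing lemma and then repairing the equality chain.
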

By choosing $f = 1$ in the logarithmic Sobolev inequality, we immediately recover the Fenchel-Willmore type inequality \eqref{ineq Fenchel-Willmore nonneg} for submanifolds with boundary of arbitrary codimension immersed in ambient spaces with nonnegative curvature. Except for the added assumption on the mean curvature vector, this provides the first extension of the Fenchel-Willmore inequality to settings where the submanifold is non-compact or has boundary.

This connection between the Fenchel-Willmore inequality and the logarithmic Sobolev inequalities for submanifolds is quite surprising to us.
Given this connection, we now briefly review some classical and recent developments in the theory of logarithmic Sobolev inequalities
with particular emphasis on results in the submanifold setting.

Logarithmic Sobolev inequalities (not restricted to the submanifold setting) form an important class of inequalities and have attracted significant attention due to their wide applicability. They have found applications, for instance, in entropy monotonicity along Ricci flow \cite{Perelman2002} and for the heat equation \cite{Ni2004}, monotonicity formula for mean curvature flow \cite{Huisken1990}, concentration of measure \cite{Ledoux2001}, and information theory \cite{Stam1959}.
The classical $L^p$-logarithmic Sobolev inequality in Euclidean space was first established by Gross \cite{Gross1975} and Weissler \cite{Weissler1978} for the case $p=2$, and later generalized to all $1<p<n$ by Del Pino and Dolbeault \cite{DelPinoDolbeault2003}.


In the setting of submanifolds in Euclidean space, the analysis of logarithmic Sobolev inequalities is more delicate. The first foundational contribution in this direction was due to Ecker \cite{Ecker2000}, who obtained a codimension-free but non-sharp $L^2$-logarithmic Sobolev inequality for Euclidean submanifolds. This was later sharpened by Brendle \cite{Brendle2022}, who established a codimension-free version with optimal constants, using techniques inspired by the Alexandrov-Bakelman-Pucci maximum principle \cite{cabre2008elliptic} and the optimal transport approach to the isoperimetric inequality \cite{BrendleEichmair2023}.
Later, following the strategies of Brendle, Pham \cite{Pham25} proved a sharp logarithmic Sobolev inequality for closed $n$-dimensional submanifolds $\Sigma$ in Riemannian manifolds ($M^{n+m}, g$) with non-negative sectional curvature, though under the additional assumption that the normalized mean curvature vector satisfies $|\sigma|=1$ everywhere on $\Sigma$, together with a characterization of the equality case.

%

His result can be stated as follows:
Let $n, m \in \mathbb{N}$, and let $(M, g)$ be a complete non-compact Riemannian manifold of dimension $n+m$ with nonnegative sectional curvature and asymptotic volume ratio $\theta > 0$. Suppose $\Sigma$ is a closed $n$-dimensional submanifold of $M$ with normalized mean curvature vector $\sigma$ satisfying $|\sigma| = 1$. Let $f$ be a positive smooth function on $\Sigma$. Then
\begin{equation*}
\int_{\Sigma} f\left(\log f + \log(\theta C_{n, m})\right) - \left(\int_{\Sigma} f\right) \log \left(\int_{\Sigma} f \right) \le \frac{n+1}{2 n^2} \int_{\Sigma} \frac{|\nabla^{\Sigma} f|^2}{f},
\end{equation*}
where $C_{n, m}$ is given by \eqref{eq C nm}.

Although Pham's inequality is sharp under the assumption that $|\sigma|$ is constant, this condition is restrictive. This provides another motivation for proving Theorem \ref{thm log sob ineq}, in addition to establishing the Fenchel-Willmore type inequality. Allowing $|\sigma|$ to vary broadens the class of submanifolds to which the inequality applies, thereby leading to the more general logarithmic Sobolev inequality in Theorem \ref{thm log sob ineq}.


In the second part of this paper, we establish another sharp Sobolev-type inequality for compact submanifolds immersed in ambient manifolds with nonnegative sectional curvature.

This is motivated by the results of Brendle \cite{brendle2021isoperimetric, Brendle2023}.
For a compact $n$-dimensional submanifold $\Sigma$ with (possibly non-empty) boundary in a non-negatively curved ambient manifold $M^{n+m}$, and for every smooth positive function $f$ on $\Sigma$, Brendle proved the inequality
\begin{equation}\label{brendle2}
n\left(\frac{(n+m)\left|\mathbb B^{n+m}\right|}{m\left|\mathbb B^m\right|}\right)^{\frac{1}{n}} \theta^{\frac{1}{n}}\left(\int_{\Sigma} f^{\frac{n}{n-1}}\right)^{\frac{n-1}{n}}
\le\int_{\Sigma} \sqrt{\left|\nabla^{\Sigma} f\right|^2+f^2|H|^2}+\int_{\partial \Sigma} f.
\end{equation}
Equality occurs if and only if $f$ is constant, $m=1$ or $2$, $M$ is the Euclidean space and $\Sigma$ is a flat round ball.

This result sharpens the classical Michael-Simon Sobolev inequality \cite{michael1973Sobolev}, and it reduces to the isoperimetric-type inequality when $f=1$. In particular, it confirms a longstanding conjecture asserting that the classical isoperimetric inequality in the Euclidean space remains valid on minimal submanifolds of codimension at most two in the Euclidean space.

On the other hand, the inequality \eqref{brendle2} is not sharp on any closed submanifold, even in the Euclidean space. It is therefore desirable to derive a Sobolev-type inequality which is attainable for closed submanifolds $\Sigma$.

In this regard, we are able to prove the following result.
\begin{theorem}\label{thm nonneg Sobolev}
Let $n, m \in \mathbb{N}$ and $(M, g)$ be a complete non-compact Riemannian manifold of dimension $n+m$ with nonnegative sectional curvature and asymptotic volume ratio $\theta>0$. Suppose $\Sigma$ is a compact $n$-dimensional submanifold immersed in $M$ (possibly with boundary $\partial \Sigma$) such that the mean curvature vector $H$ of $\Sigma$ is nowhere vanishing. Let $\beta\in \mathbb R$ and $f$ be a positive smooth function on $\Sigma$.
Then
\begin{equation}\label{nonneg Sobolev m ge 3}
\theta C_{n, m}\left(\int_{\Sigma} n f^\beta\right)^{n+1} \le n \left(\int_{\Sigma}\left(\left|\nabla^{\Sigma} f\right|+f|H|\right)+\int_{\partial \Sigma} f\right)^{n+1} \int_{\Sigma} \frac{f^{(n+1)(\beta-1)}}{|H|},
\end{equation}
where $C_{n, m}$ is given by \eqref{eq C nm}.
\end{theorem}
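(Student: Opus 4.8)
\emph{Strategy.} The plan is to deduce Theorem~\ref{thm nonneg Sobolev} from a single sharp ``unweighted'' Sobolev-type inequality, namely
\begin{equation}\label{eq:sharpMS}
n^{\frac{n}{n+1}}\left(\theta\,C_{n,m}\right)^{\frac{1}{n+1}}\left(\int_{\Sigma}f^{\frac{n+1}{n}}|H|^{\frac{1}{n}}\right)^{\frac{n}{n+1}}\ \le\ \int_{\Sigma}\left(|\nabla^{\Sigma}f|+f|H|\right)+\int_{\partial\Sigma}f ,
\end{equation}
valid for every positive smooth $f$, together with Hölder's inequality. Raising \eqref{eq:sharpMS} to the power $n+1$ gives $n^{n}\theta C_{n,m}\big(\int_{\Sigma}f^{\frac{n+1}{n}}|H|^{\frac1n}\big)^{n}\le\big(\int_{\Sigma}(|\nabla^{\Sigma}f|+f|H|)+\int_{\partial\Sigma}f\big)^{n+1}$, while Hölder's inequality with exponents $n+1$ and $\tfrac{n+1}{n}$ applied to the splitting $f^{\beta}=\big(f^{\beta-1}|H|^{-\frac{1}{n+1}}\big)\big(f|H|^{\frac{1}{n+1}}\big)$ gives $\big(\int_{\Sigma}f^{\beta}\big)^{n+1}\le\big(\int_{\Sigma}f^{(n+1)(\beta-1)}|H|^{-1}\big)\big(\int_{\Sigma}f^{\frac{n+1}{n}}|H|^{\frac1n}\big)^{n}$. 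Combining the two displays and multiplying by $n$ reproduces exactly \eqref{nonneg Sobolev m ge 3}; the entire dependence on $\beta$ resides in the factor $\int_{\Sigma}f^{(n+1)(\beta-1)}|H|^{-1}$, which is why $\beta\in\mathbb{R}$ may be taken arbitrary. (Taking $f\equiv1$ on a round geodesic sphere in $\mathbb{R}^{n+1}$ shows that \eqref{eq:sharpMS}, hence \eqref{nonneg Sobolev m ge 3}, is attained, so the constant cannot be improved.)

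\emph{Proof of \eqref{eq:sharpMS}.} Here the plan is to run the Alexandrov--Bakelman--Pucci / optimal transport argument of Brendle, paralleling the proof of Theorem~\ref{thm log sob ineq}, but with the reference weight dictated by the exponents in \eqref{eq:sharpMS}. Set $c_{0}:=\int_{\Sigma}f|H|+\int_{\partial\Sigma}f$ and solve the Neumann problem
\[
\mathrm{div}_{\Sigma}\!\left(f\,\nabla^{\Sigma}u\right)=c_{0}\,\frac{f^{\frac{n+1}{n}}|H|^{\frac1n}}{\int_{\Sigma}f^{\frac{n+1}{n}}|H|^{\frac1n}}-f|H|\ \text{ on }\Sigma,\qquad \langle\nabla^{\Sigma}u,\nu\rangle=1\ \text{ on }\partial\Sigma ,
\]
which is solvable since the right-hand side integrates to $\int_{\partial\Sigma}f$ (here $|H|>0$ is used). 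Let $\Phi(x,y)=\exp_{x}\!\big(\nabla^{\Sigma}u(x)+y\big)$ for $y$ in the normal bundle $N_{x}\Sigma$, and let $A$ be the usual contact set of pairs $(x,y)$ (with $x$ interior and $|y|$ below a threshold) at which the ABP concavity conditions hold; on $A$ the symmetric endomorphism $\mathrm{Hess}_{\Sigma}\,u(x)+\langle\mathrm{II}(x),y\rangle$ is nonnegative and $|\nabla^{\Sigma}u(x)|$ is controlled. Because $M$ is complete non-compact with asymptotic volume ratio $\theta>0$ and $|H|$ is nowhere vanishing, every point of $M$ sufficiently far from a fixed base point lies in $\Phi(A)$ once the fibre radius is large; combining this with the area formula and the Bishop volume comparison for $\mathrm{Ric}\ge0$, and sending the fibre radius to infinity, yields after normalization an inequality of the form $\theta\,C_{n,m}\,(\cdots)\le\int_{A}|\det D\Phi|$.

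\emph{Main obstacle.} This is the sharp Jacobian estimate. On $A$ one bounds $|\det D\Phi|$ by a radial factor in $|y|$ times, via the arithmetic--geometric mean inequality applied to the eigenvalues of $\mathrm{Hess}_{\Sigma}\,u+\langle\mathrm{II},y\rangle$, the quantity $\big(\tfrac1n(\Delta_{\Sigma}u+\langle H,y\rangle)\big)^{n}$; integrating $y$ over the normal ball then forces the angular integral $\int_{\mathbb{S}^{m-1}}\big(a+b\,\langle\omega,H/|H|\rangle\big)_{+}^{n}\,d\omega$, whose exact evaluation produces the optimal constant $C_{n,m}$ — and the dichotomy $m\le3$ versus $m>3$ in \eqref{eq C nm} arises precisely according to whether $a\ge b$ or $a<b$ in that integral. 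One then substitutes $\Delta_{\Sigma}u$ from the PDE, estimates $\langle\nabla^{\Sigma}f,\nabla^{\Sigma}u\rangle$ on $A$ (where $|\nabla^{\Sigma}u|$ is controlled) so as to absorb $\int_{\Sigma}|\nabla^{\Sigma}f|$ into the constant, and applies Young's inequality to optimize, arriving at \eqref{eq:sharpMS}. The delicate point is to track the weight $f^{\frac{n+1}{n}}|H|^{\frac1n}$ through all these estimates so that the powers of $n$, $\theta$, $C_{n,m}$, and of the various integrals come out exactly as in \eqref{eq:sharpMS}; the rest is the standard ABP bookkeeping already carried out for Theorem~\ref{thm log sob ineq}.
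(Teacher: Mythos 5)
Your reduction of Theorem~\ref{thm nonneg Sobolev} to the single $\beta$-free Sobolev inequality \eqref{eq:sharpMS} via H\"older with exponents $n+1$ and $\tfrac{n+1}{n}$ is correct, and I verified that the constants reproduce \eqref{nonneg Sobolev m ge 3} exactly. This is a genuinely different organisation of the proof than the paper's: the paper builds the parameter $\beta$ into the forcing term of the Neumann problem ($\operatorname{div}_\Sigma(f\nabla^\Sigma u)=nf^\beta-|\nabla^\Sigma f|-f|H|$ after a metric rescaling) and so the $\beta$-dependent weight $f^{(\beta-1)(n+1)}/|\sigma|$ emerges directly from the Jacobian integration, whereas you prove one canonical inequality and obtain the whole one-parameter family by interpolation. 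Your observation that all the $\beta$-dependence can be factored out through H\"older is a nice structural remark the paper does not make explicit.

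However, there is a real gap in your proof of \eqref{eq:sharpMS}: the Neumann problem you pose is missing the $-|\nabla^\Sigma f|$ term in the forcing, and correspondingly your $c_0=\int_\Sigma f|H|+\int_{\partial\Sigma}f$ is wrong (it should be $c_0=\int_\Sigma(|\nabla^\Sigma f|+f|H|)+\int_{\partial\Sigma}f$, which is precisely the right-hand side of \eqref{eq:sharpMS}). The $-|\nabla^\Sigma f|$ term is not optional bookkeeping: with it, the identity $f\Delta_\Sigma u=\operatorname{div}_\Sigma(f\nabla^\Sigma u)-\langle\nabla^\Sigma f,\nabla^\Sigma u\rangle$ and the contact-set bound $|\nabla^\Sigma u|<1$ give $-|\nabla^\Sigma f|-\langle\nabla^\Sigma f,\nabla^\Sigma u\rangle\le 0$ pointwise, so the $\nabla^\Sigma f$ contribution cancels cleanly in the upper bound for $\Delta_\Sigma u$ (as in the paper's Lemma~\ref{lem 2.4'}). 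With your PDE one is instead left with a positive residual $+|\nabla^\Sigma f|/f$ that cannot be ``absorbed into the constant'' at the pointwise stage of the ABP argument; the later appeal to Young's inequality is not a substitute for the correct forcing. Secondly, the explanation of the $m\le 3$ versus $m>3$ dichotomy in $C_{n,m}$ as arising from ``whether $a\ge b$ or $a<b$'' in a spherical integral is not what drives the split here: in the paper's parametrisation of the normal bundle as $\widetilde T^\perp\Sigma\oplus\mathbb{R}\cdot H/|H|$, the dichotomy comes from the concavity estimate $b^{\frac{m-1}{2}}-a^{\frac{m-1}{2}}\le\tfrac{m-1}{2}(b-a)$, valid only for $m\ge 3$; for $m\le 2$ one enlarges the ambient space by taking the product with $\mathbb{R}^{3-m}$ and uses $C_{n,3}=|\mathbb{S}^n|$. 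With the PDE corrected, the rest of your ABP outline does go through and yields \eqref{eq:sharpMS}, so the overall strategy is salvageable.
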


\begin{theorem}\label{thm nonneg Sobolev equality}
With the same assumptions and notations as in Theorem \ref{thm nonneg Sobolev} and suppose that $1\le m\le3$, the equality in \eqref{nonneg Sobolev m ge 3} holds if and only if
$f$ {and $|H|$ are constant}, $\Sigma$ is umbilical and with no boundary, and
$|\Sigma|=\theta |\mathbb S^n| n^n|H|^{-n}$.
\end{theorem}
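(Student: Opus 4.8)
The plan is to revisit the proof of Theorem~\ref{thm nonneg Sobolev} and record exactly where each inequality enters, and then to impose that all of them hold with equality simultaneously. Writing $|H| = n|\sigma|$, the inequality \eqref{nonneg Sobolev m ge 3} is obtained by a transport/ABP argument in the spirit of Brendle and Pham (the same machinery that underlies Theorems~\ref{thm fenchel willmore} and \ref{thm log sob ineq}): one solves a Neumann problem on $\Sigma$ for a potential $u$ whose data are built from $f$, $\nabla^\Sigma f$ and $|H|$, forms the normal-bundle map $\Phi(x,\xi) = \exp_x\!\big(\nabla^\Sigma u(x) + \xi\big)$ on a suitable subset of the normal disc bundle, bounds the volume of its image below by $\theta$ times a model volume (by monotonicity of the volume ratio and the definition of $\theta$) and above by $\int |\det D\Phi|$, in which the Jacobian is controlled via the arithmetic--geometric mean inequality applied to $\nabla^2 u$ and to the shape operator of $\Sigma$ --- the nonnegativity of the ambient sectional curvature being used to discard the curvature terms --- and finally a Hölder inequality inserts the exponent $\beta$ and the weight $f^{(n+1)(\beta-1)}|H|^{-1}$.

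I would then analyze these steps in turn. Equality in the Hölder step forces a pointwise proportionality between a power of $f$ and a power of $|H|$. Equality in the AM--GM estimate for $\det D\Phi$ forces the shape operator of $\Sigma$ in the relevant normal direction to be a multiple of the identity, i.e.\ $\Sigma$ is umbilical, and forces $\nabla^2 u$ to be a scalar multiple of the identity. Equality in the volume-covering step forces the boundary contribution $\int_{\partial\Sigma} f$ to vanish, hence $\partial\Sigma = \emptyset$ since $f>0$, forces the normal exponential map of $\Sigma$ to fill $M$ up to a null set with the ambient metric flat along the relevant normal geodesics, and forces the monotone volume ratio to be identically $\theta$. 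Tracing these conditions back through the potential equation --- the contact set must then be all of $\Sigma$ and the transport part of $\Phi$ is rigid --- shows that $f$ must be constant, and then, via the Hölder proportionality, that $|H|$ is constant as well; the exactness of the covering then reads $|\Sigma| = \theta C_{n,m}\,n^n|H|^{-n}$, which, since $m\le 3$ gives $C_{n,m}=|\mathbb S^n|$, is precisely $|\Sigma| = \theta|\mathbb S^n|\,n^n|H|^{-n}$, equivalently $\int_\Sigma |\sigma|^n = \theta|\mathbb S^n|$ --- the equality case of the Fenchel--Willmore inequality of Theorem~\ref{thm fenchel willmore}, whose rigidity statement also re-derives umbilicity once $|H|$ is constant.

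For the converse, assume $f\equiv c$ and $|H|$ are constant, $\partial\Sigma=\emptyset$, and $|\Sigma| = \theta|\mathbb S^n|\,n^n|H|^{-n}$. Then the left side of \eqref{nonneg Sobolev m ge 3} equals $\theta|\mathbb S^n|\,(n c^\beta|\Sigma|)^{n+1}$ and the right side equals $n\big(c\,|H|\,|\Sigma|\big)^{n+1}\,\frac{c^{(n+1)(\beta-1)}|\Sigma|}{|H|} = n\,c^{(n+1)\beta}|H|^{n}|\Sigma|^{n+2}$, and the two agree exactly because $|H|^{n}|\Sigma| = \theta|\mathbb S^n|\,n^n$; the umbilicity hypothesis is not needed for this direction but is recorded because it is implied. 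The main obstacle will be the rigidity extraction inside the ABP argument: one must argue that pointwise equality in the Jacobian/AM--GM estimates on the contact set propagates to all of the (necessarily connected) $\Sigma$, which is where smoothness and an open--closed/continuity argument enter; handle the degenerate value $\beta=1$ of the Hölder step, where the weight $f^{(n+1)(\beta-1)}$ becomes trivial and the constancy of $f$ has to be recovered from the ABP rigidity alone; rule out the boundary; and verify that the constant actually attained is the sphere constant $C_{n,m}=|\mathbb S^n|$, so that the hypothesis $1\le m\le 3$ is genuinely needed.
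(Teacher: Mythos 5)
Your high-level strategy --- trace the inequalities used in the proof of Theorem~\ref{thm nonneg Sobolev} and impose equality in each --- is the correct one, and your verification of the converse direction (the algebra showing that $f$ constant, $|H|$ constant, $\partial\Sigma=\emptyset$, $|\Sigma|=\theta|\mathbb S^n|n^n|H|^{-n}$ forces equality) is correct. However, there are two substantive gaps in your description of the forward direction.

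First, there is no H\"older inequality anywhere in the proof of Theorem~\ref{thm nonneg Sobolev}, so the claim that a H\"older step ``forces a pointwise proportionality between a power of $f$ and a power of $|H|$'' does not correspond to anything in the argument and cannot be where that information comes from. In the paper, the exponent $\beta$ and the weight $f^{(n+1)(\beta-1)}/|H|$ arise mechanically from the $t$-integration of the Jacobian bound together with the metric normalization $\int_\Sigma(|\nabla^\Sigma f|+f|H|)+\int_{\partial\Sigma}f=\int_\Sigma nf^\beta$. The constancy of $f$ is extracted not from H\"older but from the rigidity of the trace estimate $\operatorname{tr}\mathrm A\le n(f^{\beta-1}-|\sigma|-t|\sigma|)$: equality there forces $\tfrac{1}{f}|\nabla^\Sigma f|+\tfrac{1}{f}\langle\nabla^\Sigma f,\nabla^\Sigma u\rangle=0$, and since $|\nabla^\Sigma u|<1$ on $\Omega$, Cauchy--Schwarz gives $\nabla^\Sigma f\equiv0$ on the dense set $\Omega$. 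The constancy of $|H|$ then follows from the PDE $\operatorname{div}_\Sigma(f\nabla^\Sigma u)=nf^\beta-|\nabla^\Sigma f|-f|H|$ once $f$ and $u$ are constant (and $\partial\Sigma=\emptyset$ is obtained from $\nabla^\Sigma u\equiv0$ being incompatible with the Neumann condition $\langle\nabla^\Sigma u,\eta\rangle=1$, not from ``forcing the boundary contribution to vanish'' directly).

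Second, and more seriously, you have not addressed the key technical obstacle: one needs to upgrade equality in the \emph{integrated} inequality to \emph{pointwise} equality in the Jacobian estimate, which is not automatic. The paper's mechanism for this is Lemma~\ref{lem xyt=1'}, a \emph{reverse} Jacobian bound $|\det D\Phi_r(x,y,t)|\ge r^m(1+r(f^{\beta-1}-|\sigma|-t|\sigma|))^n$ valid on the sphere $|\nabla^\Sigma u|^2+|y|^2+t^2=1$, proved by a delicate contradiction argument: a strict pointwise drop in the Jacobian on an open neighborhood $V$ would, after integrating over $V$ and estimating $\lim_{\alpha\to1}\lim_{r\to\infty}\tfrac{1}{(1-\alpha)r^{n+m}}I(\alpha,r)>0$ from below, yield a strict improvement of the inequality, contradicting the assumed equality. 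Once this lemma is in hand, combining the two Jacobian bounds gives exact equality in all the $\operatorname{tr}Q(s)$ estimates for small $s$, and then the rigidity statements (all $\lambda_i$ equal from concavity of $z/(1+z)$, $\nabla^\Sigma f=0$, etc.) follow. Your phrase ``propagation from the contact set to all of $\Sigma$'' misidentifies the issue; the issue is going from integral to pointwise equality, which is where Lemma~\ref{lem xyt=1'} does the real work. Without this step your outline is incomplete.
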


The proofs of Theorems \ref{thm log sob ineq} and Theorem \ref{thm nonneg Sobolev} are inspired by the method developed in \cite{Brendle2023}, which involves constructing a suitable ``transport map'' from a subset of the normal bundle of $\Sigma$ into the ambient space $M$ by solving a certain linear elliptic equation with Neumann boundary condition. By estimating the Jacobian determinant of this map, we obtain the desired geometric inequalities. However, our analysis necessarily differs from Brendle's approach: his estimates are carried out so that sharpness is attained by the flat disk with boundary, whereas ours are carried out so that sharpness occurs precisely when the submanifold is closed and umbilical when $m\le 3$.


We also present several applications of Theorem \ref{thm nonneg Sobolev} in the Section \ref{sec some applications}. These include a sharp Sobolev inequality for submanifolds with constant $|\sigma|$, as well as geometric inequalities involving the integral $\int_\Sigma \frac{1}{|\sigma|}$; see Corollaries \ref{cor sharp classical Sobolev} and \ref{cor 2}. We further examine the case where $\Sigma$ is a complete non-compact immersed surface. Under a growth condition on $\left(\int_{B_r}|\sigma|\right)^3 \int_{B_r} \frac{1}{|\sigma|}$, our result yields information about the topology of $\Sigma$, in particular its Cohn-Vossen deficit; see Corollary \ref{cor3}.

The rest of the paper is organized as follows. In Section \ref{sec proof non compact fenchel willmore}, we establish two versions of the Fenchel-Willmore inequality for complete non-compact submanifolds $\Sigma$: one in the surface case and the other in higher dimensions (Theorems \ref{thm non-compact fenchel willmore} and \ref{thm noncopct willmore higher dim}), assuming the validity of Theorem \ref{thm log sob ineq}. In Sections \ref{sec proof thm log sob ineq} and \ref{sec proof thm log sob ineq equality case}, we prove the logarithmic Sobolev inequality (Theorem \ref{thm log sob ineq}) and characterize its equality case (Theorem \ref{thm log sob intro equality}), respectively. Section \ref{sec proof thm nonneg Sobolev} presents a sharp Sobolev-type inequality (Theorem \ref{thm nonneg Sobolev}) that is attained for closed umbilical submanifolds $\Sigma$. In Section \ref{sec some applications}, we discuss some applications of Theorem \ref{thm nonneg Sobolev}.
Finally, in Section \ref{sec pf thm nonneg Sobolev equality}, we prove the Theorem \ref{thm nonneg Sobolev equality}, the equality case of Theorem \ref{thm nonneg Sobolev}.

\begin{notation*}
Throughout the paper, $(M, g)$ denotes a complete, noncompact Riemannian manifold of dimension $n+m$, and $\Sigma \subset M$ is an immersed submanifold of dimension $n$, possibly with boundary. The Levi-Civita connection of $(M, g)$ is denoted by $\bar{\nabla }$. On $\Sigma$, we denote the Levi-Civita connection, Laplacian, and divergence by $\nabla^{\Sigma}, \Delta_{\Sigma}$, and $\operatorname{div}_{\Sigma}$, respectively. The second fundamental form of $\Sigma$, denoted by II, is a symmetric bilinear form on the tangent bundle of $\Sigma$ that takes values in the normal bundle $T^{\perp} \Sigma$. At a point $x \in \Sigma$, for tangent vector fields $X, Y$ and a normal vector field $V$, the second fundamental form satisfies $\langle\mathrm{II}(X, Y), V\rangle=\left\langle\bar \nabla_X Y, V\right\rangle$. The mean curvature vector and the normalized mean curvature vector of $\Sigma$ are defined by $H=\operatorname{tr}\mathrm{II}$ and $\sigma=\tfrac{1}{n}H$, respectively.
\end{notation*}

\section*{Acknowledgements}
The second named author is grateful to Ben Andrews, Robert McCann, Chao Xia, and Yong Wei for valuable discussions. He was supported by the University of Wollongong Early-Mid Career Researcher Enabling Grant and the UOW Advancement and Equity Grant Scheme for Research 2024.

\section{Proofs of non-compact Fenchel-Willmore inequalities}\label{sec proof non compact fenchel willmore}

In this section, we prove the two versions of Fenchel-Willmore inequality for complete non-compact $\Sigma$: Theorem \ref{thm non-compact fenchel willmore} and Theorem \ref{thm noncopct willmore higher dim}, assuming the validity of Theorem \ref{thm log sob ineq} (and hence of Theorem \ref{thm fenchel willmore}). We remark that while Theorem \ref{thm noncopct willmore higher dim} holds (with no assumption on the isoperimetric constant, as it is automatic) also for closed $\Sigma$, it is already included in Theorem \ref{thm fenchel willmore}.

\begin{proof}[Proof of Theorem \ref{thm non-compact fenchel willmore}]
Hartman \cite[Theorem 7.1]{Hartman1964} proved that for almost every $r$, the boundary $\partial B_r$ is a piecewise smooth, embedded closed curve. In particular, the length $|\partial B_r|$ is well-defined for almost every $r$. By \cite[Theorem 1]{White1987}, we have $\int_\Sigma K < \infty$, and $\Sigma$ is homeomorphic to $\bar{\Sigma} \setminus \{p_1, \ldots, p_l\}$, where $\bar{\Sigma}$ is a closed 2-manifold and $\{p_i\}_{i=1}^{l}$ is a finite subset of $\bar{\Sigma}$.
By \cite[Theorem A]{Shiohama1985}, it follows that
$$\lim_{r \rightarrow \infty} \frac{|\partial B_r|}{r} = 2\pi \chi(\Sigma) - \int_\Sigma K. $$
Thus, we can take a sequence $r_i \to \infty$ such that $|\partial B_{r_i}|$ is defined and
$$\lim_{i \to \infty} \frac{\int_{B_{r_i}} |\sigma|}{r_i} = \limsup_{r \to \infty} \frac{\int_{B_r} |\sigma|}{r} = C. $$
Then,
$$\lim_{i \to \infty} \frac{|\partial B_{r_i}|}{\int_{B_{r_i}} |\sigma|} = \frac{2\pi \chi(\Sigma) - \int_\Sigma K}{C}. $$
In view of Theorem \ref{thm fenchel willmore}, we conclude that
\begin{align*}
\int_\Sigma |\sigma|^2 = \lim_{i \to \infty} \int_{B_{r_i}} |\sigma|^2
& \ge \lim_{i \to \infty} \theta C_{2, m} e^{ -\frac{|\partial B_{r_i}|}{\int_{B_{r_i}} |\sigma|} } \\
& = \theta C_{2, m} e^{ -\frac{2\pi \chi(\Sigma) - \int_\Sigma K}{C} }.
\end{align*}
\end{proof}

\begin{remark}
\begin{enumerate}
\item
It is not hard to see that $\limsup_{r\to\infty}\frac{\int_{B_r}|\sigma|}{r}$ is independent of the choice of the center of $B_r$.
\item
By \cite[Theorem A]{Shiohama1985}, $C$ can also be expressed as $\limsup_{r\to \infty}\frac{\left|\partial B_r\right|}{2\left|B_r\right|} \int_{B_r}|\sigma|$.
\item
By Cohn-Vossen theorem \cite{CohnVossen1935}, the quantity $2 \pi \chi(\Sigma)-\int_{\Sigma} K$ that appears on the RHS of \eqref{ineq non-compact willmore} is non-negative.
\end{enumerate}
\end{remark}

We now turn to a Fenchel-Willmore inequality which holds for higher dimensional $\Sigma$.
Motivated by the Euclidean isoperimetric inequality, we define the isoperimetric constant as follows.
\begin{definition}\label{def isop const}
Let $N$ be an $n$-dimensional Riemannian manifold.
The isoperimetric constant of $N$ is defined by
$$
C_{\mathrm{iso}}(N)=\inf_{\Omega} \frac{ |\partial \Omega| }{ |\Omega|^{\frac{n-1}{n}}}
$$
where $\Omega$ ranges over all $C^1$ open bounded subsets of $N$.
\end{definition}

For an $n$-dimensional ($n>1$) submanifold $\Sigma$ in $M$ with asymptotic volume ratio $\theta$, it is more natural to consider the quantity $\theta^{-\frac{1}{ n }} C_{\mathrm{iso}}(\Sigma)$. Intuitively, a small value of $C_{\mathrm{iso}}(\Sigma)$ means that some relatively large volumes can be enclosed by relatively small boundaries. For instance, when $\Sigma$ contains a relatively narrow ``bottleneck'' which encloses a large region, or when $\Sigma$ is simply a closed manifold (in which case $C_{\mathrm{iso}}=0$). Theorem \ref{thm noncopct willmore higher dim} says that if $\Sigma$ has a small isoperimetric constant, then the Fenchel-Willmore integral $\int_{\Sigma} |\sigma|^n$ admits a non-trivial lower bound. By Federer-Fleming theorem \cite[Theorem II.2.1]{Chavel2001}, the isoperimetric constant is also equal to the Sobolev constant $C_{\mathrm{S}}=\inf_f \frac{\|{\nabla } f\|_1}{\|f\|_{\frac{n}{n-1}}}$, where $f$ ranges over $C_c^{1}(N)$. Therefore, the constant $C_{\mathrm{iso}}$ in Theorem \ref{thm noncopct willmore higher dim} can also be replaced the Sobolev constant $C_{\mathrm{S}}$.

\begin{proof}[Proof of Theorem \ref{thm noncopct willmore higher dim}]
Take a sequence of bounded open sets $\Omega_i$ in $\Sigma$ such that
$\lim_{i\to \infty}\frac{|\partial \Omega_i|}{|\Omega_i|^{\frac{n-1}{n}}} =C_{\mathrm{iso}}(\Sigma)$
and $\left(\theta K_{n, m}\right)^{\frac{1}{n}} \frac{\left|\Omega_i\right|^{\frac{n-1}{n}}}{\left|\partial \Omega_i\right|}>\frac{1}{n}$ for all $i$.

By setting $f = 1$ in \eqref{brendle2}, we obtain, for all $i$,
\begin{align*}
0<\left(\theta K_{n, m}\right)^{\frac{1}{n}}
\frac{|\Omega_i|^{\frac{n-1}{n}}}{|\partial\Omega_i|} -\frac{1}{n} \le \frac{1}{|\partial\Omega_i|}\int_{\Omega_i}|\sigma|.
\end{align*}
Then by \eqref{ineq Fenchel-Willmore nonneg},
\begin{equation*}
\begin{aligned}
\int_{\Sigma} |\sigma|^n
\ge \int_{\Omega_i} |\sigma|^n
& \ge \theta C_{n, m} \exp\left(-\frac{|\partial \Omega_i|}{\int_{\Omega_i} |\sigma|} \right) \\
& \ge \theta C_{n, m} \exp\left(-\left((\theta K_{n, m})^{\frac{1}{n}} \frac{|\Omega_i|^{\frac{n-1}{n}}}{|\partial \Omega_i|} - \frac{1}{n} \right)^{-1} \right).
\end{aligned}
\end{equation*}

Letting $i\to \infty$ would give the result.
\end{proof}

\section{Proof of Theorem \ref{thm log sob ineq}}\label{sec proof thm log sob ineq}
In this section, we will prove inequality \eqref{log sob intro}. We are going to assume first $m\ge3$.

We will show that it suffices to prove the result in the case where $\Sigma$ is connected, by appealing to a simple algebraic inequality (Lemma \ref{lem alg ineq}). Accordingly, from now until the end of the proof of Theorem \ref{thm log sob ineq}, we assume that $\Sigma$ is connected. At the very end of the proof, we will return to address the general case where $\Sigma$ may have multiple connected components.

Let $f$ be a positive smooth function on $\Sigma$. Since the inequality is invariant under scaling $f$ by a positive constant, we may assume, without loss of generality, that $f$ satisfies the following normalization:
\begin{equation}\label{normalization log Sobolev}
\frac{n}{n+1} \int_{\Sigma} f | \sigma| \log f= \frac{1}{2 n} \int_{\Sigma} \frac{\left|\nabla^{\Sigma} f\right|^2}{f|{\sigma}|}+\int_{\partial \Sigma}f.
\end{equation}
Therefore, to prove \eqref{log sob intro}, it is equivalent to show that
\begin{equation}\label{(2.2)}
\theta(n+1) \frac{\left|\mathbb{S}^{n+m-1}\right|}{\left|\mathbb{S}^{m-2}\right|} \le \int_{\Sigma} f |\sigma|^n.
\end{equation}
For the given function $f$, let us consider the elliptic equation
\begin{equation}\label{eq: u}
\begin{cases}
\mathrm{div}\left(f \nabla^{\Sigma} u\right)= \frac{n}{n+1} f|\sigma| \log f-\frac{1}{2 n} \frac{\left|\nabla^{\Sigma} f\right|^2}{f|\sigma|}
\text { on } \Sigma\\
\langle \nabla^{\Sigma} u, \eta\rangle =1
\text { on } \partial\Sigma \text { if } \partial \Sigma \ne \emptyset.
\end{cases}
\end{equation}
Here, $\eta$ denotes the co-normal to $\partial \Sigma$.
Since $\Sigma$ is connected, the condition \eqref{normalization log Sobolev} ensures the existence of such a solution, which is unique up to an additive constant. By standard elliptic regularity theory, $u$ is $C^{2, \gamma}$ for any $0<\gamma<1$ (\cite[Theorem 6.30]{GilbargTrudinger}).

Denote by $T_x^{\perp} \Sigma$ the space of normal vectors at $x$ and $\widetilde{T}_x^{\perp} \Sigma:=\{V \in T_x^{\perp} \Sigma:\langle V, \sigma(x)\rangle=0\}$.
We define
\begin{equation}\label{def Omega U Ar}
\begin{aligned}
\Omega &:= \left\{ x \in \Sigma \setminus \partial \Sigma: \left| \nabla^{\Sigma} u(x) \right| < 1 \right\}, \\
U &:= \left\{ (x, y, t):
x \in \Sigma \setminus \partial \Sigma, \;
y \in \widetilde{T}_x^{\perp} \Sigma, \;
t \in \mathbb{R}
\text{ such that }
\left| \nabla^{\Sigma} u(x) \right|^2 + |y|^2 + t^2 < 1
\right\}, \\
A_r &:= \left\{ (x, y, t) \in U:
\begin{array}{l}
r u(z) + \tfrac{1}{2} d\big(z, \exp_x(r \nabla^{\Sigma} u(x) + r y + r t \tfrac{\sigma}{|\sigma|})\big)^2 \\
\quad \ge r u(x) + \tfrac{1}{2} r^2 \left(\left| \nabla^{\Sigma} u(x) \right|^2 + |y|^2 + t^2 \right) \\
\text{for all } z \in \Sigma
\end{array}
\right\}.
\end{aligned}
\end{equation}
Also define the map $\Phi_r: \widetilde {T}^{\perp} \Sigma \times \mathbb{R} \rightarrow M$ by
\begin{equation}\label{def Phi r}
\Phi_r(x, y, t)=\exp_x\left(r \left(\nabla^{\Sigma} u(x)+ y+ t \frac{\sigma(x)}{|\sigma(x)|}\right)\right).
\end{equation}
\begin{lemma}\label{2.1}
For every $x \in \Omega$, we have
$$
\Delta_{\Sigma} u(x) \le n|\sigma|\left(f(x)^{\frac{1}{n+1}}-\sqrt{1-\left|\nabla^{\Sigma} u(x)\right|^2}\right).
$$
\end{lemma}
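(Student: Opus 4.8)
The plan is to convert the statement into a purely pointwise scalar inequality on $\Omega$, in which the ambient geometry plays no role whatsoever (it will enter only later, in the Jacobian estimate for $\Phi_r$). First I would apply the product rule $\operatorname{div}(f\nabla^\Sigma u)=f\,\Delta_\Sigma u+\langle\nabla^\Sigma f,\nabla^\Sigma u\rangle$ together with the defining equation \eqref{eq: u} to obtain, at every interior point,
\[
\Delta_\Sigma u=\frac{n}{n+1}|\sigma|\log f-\frac{1}{2n}\frac{|\nabla^\Sigma f|^2}{f^2|\sigma|}-\frac{1}{f}\langle\nabla^\Sigma f,\nabla^\Sigma u\rangle .
\]
Dividing by $n|\sigma|$ (legitimate since $\sigma$ is nowhere vanishing), the asserted bound becomes equivalent to
\[
\frac{1}{n+1}\log f-\frac{1}{2n^2}\frac{|\nabla^\Sigma f|^2}{f^2|\sigma|^2}-\frac{1}{n|\sigma|}\Bigl\langle\frac{\nabla^\Sigma f}{f},\nabla^\Sigma u\Bigr\rangle\le f^{\frac{1}{n+1}}-\sqrt{1-|\nabla^\Sigma u|^2}.
\]

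The next step is to complete the square in the two terms involving $\nabla^\Sigma f$. Setting $v:=\frac{\nabla^\Sigma f}{n|\sigma|f}$, these terms equal $-\tfrac12|v|^2-\langle v,\nabla^\Sigma u\rangle=-\tfrac12|v+\nabla^\Sigma u|^2+\tfrac12|\nabla^\Sigma u|^2\le\tfrac12|\nabla^\Sigma u|^2$, so it suffices to prove the decoupled inequality
\[
f^{\frac{1}{n+1}}-\frac{1}{n+1}\log f\ \ge\ \tfrac12|\nabla^\Sigma u|^2+\sqrt{1-|\nabla^\Sigma u|^2}.
\]
For the left-hand side, writing $s=\tfrac1{n+1}\log f$ and using $e^s\ge1+s$ gives $f^{1/(n+1)}-\tfrac1{n+1}\log f=e^s-s\ge1$. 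For the right-hand side, with $a:=|\nabla^\Sigma u|^2\in[0,1)$ on $\Omega$, the function $a\mapsto\tfrac a2+\sqrt{1-a}$ has derivative $\tfrac12-\tfrac12(1-a)^{-1/2}\le0$, hence is at most its value $1$ at $a=0$ (equivalently $\sqrt{1-a}\le1-\tfrac a2$, which follows by squaring). Combining the two estimates closes the argument.

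I do not expect a genuine obstacle here: the content of the lemma is precisely that the right-hand side of \eqref{eq: u} was engineered so that the completion of the square and the two elementary scalar inequalities — convexity of the exponential and the concavity bound $\sqrt{1-a}\le1-\tfrac a2$ — fit together exactly. The only points demanding some care are the bookkeeping of signs when substituting the PDE, the legitimacy of dividing by $|\sigma|$ (guaranteed by the nowhere-vanishing hypothesis on $\sigma$), and the fact that $u\in C^{2,\gamma}$, so that $\Delta_\Sigma u(x)$ is meaningful pointwise on $\Omega$.
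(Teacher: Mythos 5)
Your proof is correct and follows essentially the same route as the paper's: substitute the PDE to get the pointwise expression for $\Delta_\Sigma u$, complete the square to absorb the $\langle\nabla^\Sigma f,\nabla^\Sigma u\rangle$ cross term at the cost of a $\tfrac12|\nabla^\Sigma u|^2$, then invoke the tangent-line bound for the exponential/logarithm together with $\sqrt{1-a}\le 1-\tfrac a2$. The only differences are cosmetic — you normalize by $n|\sigma|$ before completing the square, and you combine the two elementary scalar inequalities through the intermediate bound $\ge 1\ge$, whereas the paper adds them directly — but these are the same estimates.
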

\begin{proof}

For every point $x \in \Sigma$, the equation \eqref{eq: u} of $u$ implies
\begin{align*}
\Delta_{\Sigma} u & =\frac{n}{n+1} |\sigma|\log f-\frac{1}{2 n} \frac{\left|\nabla^{\Sigma} f\right|^2}{f^2|\sigma|}-\left\langle\frac{\nabla^{\Sigma} f}{f}, \nabla^{\Sigma} u\right\rangle \\
& =\frac{n}{n+1}|\sigma| \log f+\frac{n}{2}|\sigma|\left|\nabla^{\Sigma} u\right|^2-\frac{1}{2}\left|\frac{1}{\sqrt{n}} \frac{\nabla^{\Sigma} f}{f\sqrt{|\sigma|}}+\sqrt{n} \sqrt{|\sigma|}\nabla^{\Sigma} u\right|^2 \\
& \le \frac{n}{n+1} |\sigma|\log f+\frac{n}{2}|\sigma|\left|\nabla^{\Sigma} u\right|^2.
\end{align*}
Using the inequalities $\log \lambda \le \lambda-1$ for $\lambda>0$ and $\sqrt{1-\theta} \le 1-\frac{\theta}{2}$ for $0 \le \theta \le 1$, we obtain, for all $x \in \Omega$,
\begin{equation}\label{ineq Delta u1}
\frac{1}{n+1} \log f \le f^{\frac{1}{n+1}}-1 \text { and } \sqrt{1-\left|\nabla^{\Sigma} u\right|^2} \le 1-\frac{\left|\nabla^{\Sigma} u\right|^2}{2}.
\end{equation}
Therefore, for $x\in \Omega$,
\begin{equation}\label{ineq Delta u2}
\Delta_{\Sigma} u \le n|\sigma|\left(\left(f^{\frac{1}{n+1}}-1\right)+\frac{\left|\nabla^{\Sigma} u\right|^2}{2}\right) \le n|\sigma|\left(f^{\frac{1}{n+1}} -\sqrt{1-\left|\nabla^{\Sigma} u\right|^2}\right).
\end{equation}
\end{proof}

The following two lemmas are taken from \cite[Lemma 4.2, Lemma 4.4]{Brendle2023} respectively.
\begin{lemma}\label{lem 2.2}
For every $0 \le \alpha<1$ and $r>0$,
$$
\{p \in M: \alpha r<d(x, p)<r \text { for all } x \in \Sigma\}
$$
is contained in
$$
\left\{\Phi_r(x, y, t):(x, y, t) \in A_r \text { and }\left|\nabla^{\Sigma} u(x)\right|^2+|y|^2+t^2>\alpha^2\right\}.
$$
\end{lemma}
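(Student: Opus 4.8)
The plan is to run the standard Alexandrov--Bakelman--Pucci / optimal transport argument with the squared distance as cost function. Fix a point $p$ belonging to the set on the left, so that $\alpha r < d(x,p) < r$ for every $x \in \Sigma$. Consider the continuous function $\phi(z) = r u(z) + \tfrac12 d(z,p)^2$ on the compact manifold $\Sigma$, and let $x$ be a point where $\phi$ attains its minimum. The goal is to produce $y \in \widetilde{T}_x^{\perp}\Sigma$ and $t \in \mathbb{R}$ with $(x,y,t) \in A_r$, $|\nabla^{\Sigma} u(x)|^2 + |y|^2 + t^2 > \alpha^2$, and $\Phi_r(x,y,t) = p$.

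The first and main point is that $x \notin \partial \Sigma$. The squared distance $d(\cdot, p)^2$ need not be smooth at $x$, so I would use a barrier: pick a minimizing geodesic $\gamma : [0,1] \to M$ from $x$ to $p$ (which exists since $M$ is complete), set $q = \gamma(\varepsilon)$ for small $\varepsilon > 0$, and note that by the triangle inequality $\tfrac12 d(z,p)^2 \le \tfrac12 \bigl( d(z,q) + (1-\varepsilon) d(x,p) \bigr)^2 =: h_\varepsilon(z)$, with equality at $z = x$. Since $q$ lies in the interior of a minimizing segment issuing from $x$, the point $x$ is not in the cut locus of $q$, so $h_\varepsilon$ is smooth near $x$; hence $r u + h_\varepsilon$ also attains its minimum over $\Sigma$ at $x$, and a direct computation gives $\nabla^{\Sigma} h_\varepsilon(x) = -\bigl(\exp_x^{-1}(p)\bigr)^{\top}$. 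If $x \in \partial\Sigma$, minimality forces the derivative of $r u + h_\varepsilon$ in the inward conormal direction $-\eta$ to be nonnegative, i.e. $r\langle \nabla^{\Sigma} u(x), \eta\rangle - \langle \exp_x^{-1}(p), \eta\rangle \le 0$; combined with the Neumann condition $\langle \nabla^{\Sigma} u(x), \eta\rangle = 1$ this yields $r \le \langle \exp_x^{-1}(p), \eta\rangle \le |\exp_x^{-1}(p)| = d(x,p) < r$, a contradiction. This is precisely where the boundary hypothesis enters, and I expect it to be the main obstacle to a fully rigorous write-up, since the non-smoothness of the distance function and the boundary comparison must be handled simultaneously.

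With $x$ in the interior, the same barrier comparison together with the first-order condition $\nabla^{\Sigma}(r u + h_\varepsilon)(x) = 0$ gives $\bigl(\exp_x^{-1}(p)\bigr)^{\top} = r \nabla^{\Sigma} u(x)$. Since $\sigma(x) \ne 0$ by hypothesis, split the normal component as $\bigl(\exp_x^{-1}(p)\bigr)^{\perp} = r y + r t \, \sigma(x)/|\sigma(x)|$ with $y \in \widetilde{T}_x^{\perp}\Sigma$ and $t \in \mathbb{R}$; then $\exp_x^{-1}(p) = r\bigl(\nabla^{\Sigma} u(x) + y + t\, \sigma(x)/|\sigma(x)|\bigr)$, so $p = \Phi_r(x,y,t)$ and, because this vector realizes the minimizing geodesic from $x$ to $p$, also $d(x,p)^2 = r^2\bigl(|\nabla^{\Sigma} u(x)|^2 + |y|^2 + t^2\bigr)$.

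It remains to verify the three membership conditions. From $d(x,p) < r$ we get $|\nabla^{\Sigma} u(x)|^2 + |y|^2 + t^2 < 1$, so $(x,y,t) \in U$. Substituting $d(x,p)^2 = r^2\bigl(|\nabla^{\Sigma} u(x)|^2 + |y|^2 + t^2\bigr)$ into the minimality inequality $r u(z) + \tfrac12 d(z,p)^2 \ge r u(x) + \tfrac12 d(x,p)^2$ valid for all $z \in \Sigma$ reproduces verbatim the defining inequality of $A_r$, so $(x,y,t) \in A_r$. Finally $d(x,p) > \alpha r$ gives $|\nabla^{\Sigma} u(x)|^2 + |y|^2 + t^2 > \alpha^2$. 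Hence $p = \Phi_r(x,y,t)$ with $(x,y,t)$ in the claimed set, which establishes the inclusion.
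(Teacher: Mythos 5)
Your proof is correct. The paper itself does not prove this lemma---it simply cites \cite[Lemma~4.2]{Brendle2023}---and your argument is a faithful reconstruction of Brendle's barrier/ABP approach: minimize $\phi(z)=ru(z)+\tfrac12 d(z,p)^2$, use the smooth upper barrier $h_\varepsilon$ obtained from an intermediate point on a minimizing geodesic to derive a first-order condition without differentiating the squared distance directly, rule out $x\in\partial\Sigma$ via the Neumann condition $\langle\nabla^\Sigma u,\eta\rangle=1$, and then read off $y$, $t$ from the orthogonal decomposition of $\exp_x^{-1}(p)$. One small point worth making explicit is the logic of the barrier: since $ru+h_\varepsilon\ge\phi$ with equality at $x$ and $\phi$ is minimized at $x$, the smooth function $ru+h_\varepsilon$ is also minimized at $x$, which is what licenses taking the first-order condition; you state this but could spell it out. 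Also, since the splitting of the normal component into $y$ and the $\sigma/|\sigma|$ direction requires $\sigma(x)\ne0$, it is good that you flagged where that hypothesis enters.
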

\begin{lemma}\label{lem 2.3}
For every $(x, y, t) \in A_r$, we have
$$
g_{\Sigma}(x)+r\left(D_{\Sigma}^2 u(x)-\langle\mathrm{II}(x), y\rangle-t\left\langle\mathrm{II}(x), \frac{\sigma}{|\sigma|}\right\rangle\right) \ge 0.
$$
\end{lemma}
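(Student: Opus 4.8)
The plan is to recognise the set $A_r$ as the locus where a suitable auxiliary function attains an interior minimum over $\Sigma$, and then to read off the claimed inequality from the second-order optimality condition at that minimum. The two external inputs are the Hessian comparison theorem for the squared distance function (this is where the hypothesis of nonnegative sectional curvature enters) and the Gauss relation between the intrinsic Hessian $D^2_\Sigma$ and the ambient Hessian $\bar D^2$ (this is where the second fundamental form enters).

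\emph{Step 1: unpacking $A_r$.} Fix $(x,y,t)\in A_r$, set $v:=r\big(\nabla^\Sigma u(x)+y+t\,\tfrac{\sigma(x)}{|\sigma(x)|}\big)\in T_xM$, and $q:=\Phi_r(x,y,t)=\exp_x(v)$. Since $\nabla^\Sigma u(x)\in T_x\Sigma$, $y\in\widetilde T^\perp_x\Sigma$ and $\tfrac{\sigma(x)}{|\sigma(x)|}\in T^\perp_x\Sigma$ are mutually orthogonal, $|v|^2=r^2\big(|\nabla^\Sigma u(x)|^2+|y|^2+t^2\big)$, so the defining condition of $A_r$ reads $r u(z)+\tfrac12 d(z,q)^2\ge r u(x)+\tfrac12|v|^2$ for every $z\in\Sigma$. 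Plugging in $z=x$ together with $d(x,q)\le|v|$ forces $d(x,q)=|v|$, hence the geodesic $s\mapsto\exp_x(sv)$, $s\in[0,1]$, is minimising. Consequently the function $\psi(z):=r u(z)+\tfrac12 d(z,q)^2$ on $\Sigma$ attains its minimum at $x$, and $x$ lies in the interior $\Sigma\setminus\partial\Sigma$.

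\emph{Step 2: the second-order condition.} One would like to write $D^2_\Sigma(\psi|_\Sigma)(x)\ge 0$, but $\tfrac12 d(\cdot,q)^2$ may fail to be twice differentiable at $x$ if $q$ lies on the cut locus of $x$. To bypass this I would use the semiconcavity of the squared distance in nonnegative curvature (the barrier form of Hessian comparison): since $s\mapsto\exp_x(sv)$ is minimising, there is a $C^2$ function $\Psi$ near $x$ in $M$ with $\Psi\ge\tfrac12 d(\cdot,q)^2$, $\Psi(x)=\tfrac12 d(x,q)^2$, $\bar\nabla\Psi(x)=-v$ and $\bar D^2\Psi(x)\le g(x)$ on $T_xM$; concretely one may take $\Psi(z)=\tfrac12|v|^2-\langle v,\exp_x^{-1}(z)\rangle+\tfrac12 d(z,x)^2$, whose validity as an upper support is precisely the Toponogov hinge comparison for $\sec\ge 0$ (and for which $\bar D^2\Psi(x)=g(x)$). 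Then $r u+\Psi\ge\psi\ge\psi(x)=r u(x)+\Psi(x)$ with equality at $x$, so $x$ is an interior minimum of the $C^2$ function $(r u+\Psi)|_\Sigma$, whence $r\,D^2_\Sigma u(x)+D^2_\Sigma(\Psi|_\Sigma)(x)\ge 0$.

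\emph{Step 3: identifying the terms, and the main obstacle.} For a function $\phi$ defined near $x$ in $M$ and $X,Y\in T_x\Sigma$, the Gauss formula gives $D^2_\Sigma(\phi|_\Sigma)(X,Y)=\bar D^2\phi(X,Y)+\langle\mathrm{II}(X,Y),\bar\nabla\phi\rangle$. Applying this with $\phi=\Psi$: the normal component of $\bar\nabla\Psi(x)=-v$ is $-r\big(y+t\,\tfrac{\sigma}{|\sigma|}\big)$ and $\mathrm{II}(X,Y)$ is normal, so $\langle\mathrm{II}(X,Y),\bar\nabla\Psi(x)\rangle=-r\langle\mathrm{II}(x)(X,Y),y\rangle-rt\,\langle\mathrm{II}(x)(X,Y),\tfrac{\sigma}{|\sigma|}\rangle$; and $\bar D^2\Psi(x)(X,Y)\le g(x)(X,Y)=g_\Sigma(x)(X,Y)$ by Step 2. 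Substituting into $r\,D^2_\Sigma u(x)+D^2_\Sigma(\Psi|_\Sigma)(x)\ge 0$ yields, for all $X,Y\in T_x\Sigma$,
$$ g_\Sigma(x)(X,Y)+r\Big(D^2_\Sigma u(x)(X,Y)-\langle\mathrm{II}(x)(X,Y),y\rangle-t\,\langle\mathrm{II}(x)(X,Y),\tfrac{\sigma}{|\sigma|}\rangle\Big)\ge 0, $$
which is the assertion. Steps 1 and 3 are routine; the one delicate point is the non-smoothness of $\tfrac12 d(\cdot,q)^2$ at $x$ in Step 2, handled by the semiconcavity/barrier device, and it is exactly there — together with the Hessian bound $\bar D^2\Psi(x)\le g(x)$ — that nonnegative sectional curvature is used. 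This is the content imported from \cite{Brendle2023}.
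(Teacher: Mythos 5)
Your proof is correct and takes essentially the route of the cited argument (the paper itself does not prove this lemma but refers to \cite{Brendle2023}, Lemma~4.4): second-order minimality of $z\mapsto ru(z)+\tfrac12 d\bigl(z,\Phi_r(x,y,t)\bigr)^2$ at its interior minimizer $x$, a nonnegative-curvature upper barrier for the squared distance to handle possible non-smoothness at the cut locus, and the Gauss formula $D^2_\Sigma(\phi|_\Sigma)=\bar D^2\phi+\langle\mathrm{II},\bar\nabla\phi\rangle$ to convert the ambient Hessian to the intrinsic one. Your explicit Toponogov-hinge barrier $\Psi$ with $\bar D^2\Psi(x)=g(x)$ and $\bar\nabla\Psi(x)=-v$ is a clean packaging of the comparison step.
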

\begin{lemma}\label{lem 2.4}
Let $(x, y, t) \in A_r$. We have
\begin{equation}\label{(2.5)}
1+r|\sigma|\left(f(x)^{\frac{1}{n+1}}-\sqrt{1-\left|\nabla^{\Sigma} u(x)\right|^2}-t\right) \ge 0
\end{equation}
and
\begin{equation}\label{2.4'}
\left|\det D \Phi_r(x, y, t)\right| \le r^m\left(1+r|\sigma(x)|\left(f(x)^{\frac{1}{n+1}}-\sqrt{1-\left|\nabla^{\Sigma} u(x)\right|^2}-t\right)\right)^n.
\end{equation}
\end{lemma}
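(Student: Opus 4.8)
The plan is to combine the two facts already established — the pointwise Laplacian bound of Lemma~\ref{2.1} and the positive semi-definiteness in Lemma~\ref{lem 2.3} — with a Jacobian comparison for $\Phi_r$ that exploits the nonnegativity of the sectional curvature of $M$. Fix $(x,y,t)\in A_r$ and set $W=\nabla^{\Sigma}u(x)+y+t\,\tfrac{\sigma(x)}{|\sigma(x)|}\in T_xM$, so that $\Phi_r(x,y,t)=\exp_x(rW)$. Differentiating $\Phi_r$ and splitting its domain $\widetilde T^{\perp}\Sigma\times\mathbb R$ near $(x,y,t)$ into the $n$ directions tangent to $\Sigma$, the $m-1$ directions in $\widetilde T_x^{\perp}\Sigma$, and the single $t$-direction, one expresses $D\Phi_r$ through Jacobi fields along the geodesic $s\mapsto\exp_x(srW)$, $s\in[0,1]$. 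The $m$ Jacobi fields associated with the $y$- and $t$-directions vanish at $s=0$ and have initial covariant derivative of norm $r$ in the relevant unit direction; by the Jacobi field comparison in nonnegative sectional curvature, each has norm at most $r$ at $s=1$, contributing a factor $r^m$. The $n$ Jacobi fields coming from the tangential directions take the value $e_i$ at $s=0$ for an orthonormal basis $\{e_i\}$ of $T_x\Sigma$, with initial covariant derivatives governed by $D_{\Sigma}^2u(x)$ and by the second fundamental form terms $-\langle\mathrm{II}(x),y\rangle-t\langle\mathrm{II}(x),\tfrac{\sigma}{|\sigma|}\rangle$. Using nonnegative sectional curvature once more, together with the fact (Lemma~\ref{lem 2.3}) that the operator
\[
P:=g_{\Sigma}(x)+r\left(D_{\Sigma}^2u(x)-\langle\mathrm{II}(x),y\rangle-t\left\langle\mathrm{II}(x),\tfrac{\sigma}{|\sigma|}\right\rangle\right)
\]
is positive semi-definite on $A_r$, this yields $|\det D\Phi_r(x,y,t)|\le r^m\det P$. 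This is precisely the Jacobi field computation carried out in \cite{Brendle2023}; it is the only place the curvature hypothesis is used, and it is the main technical obstacle, although it is essentially routine once Lemma~\ref{lem 2.3} is available.

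Next I would take the trace of $P$ with respect to $g_{\Sigma}(x)$. Since the traces of $D_{\Sigma}^2u(x)$, $\langle\mathrm{II}(x),y\rangle$ and $\langle\mathrm{II}(x),\tfrac{\sigma}{|\sigma|}\rangle$ as endomorphisms of $T_x\Sigma$ equal $\Delta_{\Sigma}u(x)$, $\langle H(x),y\rangle$ and $\langle H(x),\tfrac{\sigma}{|\sigma|}\rangle$ respectively, and since $H=n\sigma$ with $y$ orthogonal to $\sigma$ (so that $\langle H(x),y\rangle=0$ and $\langle H(x),\tfrac{\sigma}{|\sigma|}\rangle=n|\sigma|$), we get
\[
\operatorname{tr}_{g_{\Sigma}(x)}P=n+r\left(\Delta_{\Sigma}u(x)-tn|\sigma|\right).
\]
Because $x\in\Omega$, Lemma~\ref{2.1} applies and gives
\[
\operatorname{tr}_{g_{\Sigma}(x)}P\le n\left(1+r|\sigma|\left(f(x)^{\frac{1}{n+1}}-\sqrt{1-|\nabla^{\Sigma}u(x)|^2}-t\right)\right).
\]
On the other hand $P\ge 0$ forces $\operatorname{tr}_{g_{\Sigma}(x)}P\ge 0$, so the right-hand side above is nonnegative; dividing by $n>0$ yields \eqref{(2.5)}.

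Finally, applying the arithmetic--geometric mean inequality to the positive semi-definite operator $P$ gives $\det P\le\left(\tfrac1n\operatorname{tr}_{g_{\Sigma}(x)}P\right)^n$. Combining this with the trace bound of the previous paragraph — whose right-hand side is nonnegative by \eqref{(2.5)}, so that raising it to the $n$-th power preserves the inequality — and with the estimate $|\det D\Phi_r(x,y,t)|\le r^m\det P$ from the first paragraph, we obtain
\[
|\det D\Phi_r(x,y,t)|\le r^m\left(1+r|\sigma(x)|\left(f(x)^{\frac{1}{n+1}}-\sqrt{1-|\nabla^{\Sigma}u(x)|^2}-t\right)\right)^n,
\]
which is \eqref{2.4'}.
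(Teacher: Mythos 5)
Your proof is correct, and the underlying engine is the same as the paper's: both rest on the matrix ODE/Jacobi field estimate from Brendle (the inequality $\operatorname{tr} Q(s) \le \tfrac{m}{s} + \sum_i \tfrac{\lambda_i}{1 + s\lambda_i}$, where $\lambda_i$ are the eigenvalues of $\mathrm{A} = D_\Sigma^2 u - \langle \mathrm{II}, y\rangle - t\langle \mathrm{II}, \sigma/|\sigma|\rangle$), combined with Lemma~\ref{2.1} and Lemma~\ref{lem 2.3}. The derivation of \eqref{(2.5)} is literally identical. For \eqref{2.4'}, though, you reorganize the algebra: you first integrate Brendle's differential inequality to obtain the intermediate bound $|\det D\Phi_r| \le r^m \det(g_\Sigma + r\mathrm{A}) = r^m\prod_i(1 + r\lambda_i)$, and only then invoke AM-GM on the nonnegative eigenvalues $1 + r\lambda_i$ together with the trace estimate. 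The paper instead pushes the concavity and monotonicity of $z \mapsto \tfrac{z}{1+z}$ into the differential inequality itself, obtaining $\operatorname{tr} Q(s) \le \tfrac{m}{s} + \tfrac{n|\sigma|(f^{1/(n+1)} - \sqrt{1 - |\nabla^\Sigma u|^2} - t)}{1 + s|\sigma|(f^{1/(n+1)} - \sqrt{1 - |\nabla^\Sigma u|^2} - t)}$, and integrates that directly; this is recorded as the monotonicity of the ratio in \eqref{non-inc}. The two routes are logically equivalent (AM-GM is exactly the integrated form of the paper's concavity step), and yours is arguably slightly more transparent. One reason the paper prefers its packaging is that the monotone quantity \eqref{non-inc} is reused verbatim in the equality analysis (Lemma~\ref{lem xyt=1}); your intermediate factorization would need to be adapted there.

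Two small caveats. First, your heuristic that "each Jacobi field has norm at most $r$ at $s = 1$, contributing a factor $r^m$" is looser than what is actually proved: the determinant of $D\Phi_r$ is not a product of norms of images of basis vectors, and the genuine argument goes through the full $(n+m)\times(n+m)$ matrix $P(s)$ of Jacobi field components and the trace of $Q(s) = P(s)^{-1}P'(s)$. You correctly defer to \cite{Brendle2023} for this, so it's not a gap, but the sketch oversimplifies the actual mechanism. Second, to make the AM-GM step airtight you need $1 + r\lambda_i \ge 0$ for all $i$, which you correctly source from Lemma~\ref{lem 2.3}; the paper uses the same fact (in the form $1 + s\lambda_i > 0$ for $s \in (0, r)$) to justify the concavity step.
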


\begin{proof}
Fix a point $(x, y, t) \in A_r$ and define $\mathrm{A}=D_{\Sigma}^2 u(x)-\langle\mathrm{II}(x), y\rangle-$ $t\langle\mathrm{II}(x), \frac{\sigma}{|\sigma|}\rangle$. We first show \eqref{(2.5)}. By Lemma \ref{2.1}, we have
\begin{equation}\label{(2.6)}
\operatorname{tr} \mathrm{A}=\Delta_{\Sigma} u(x)-n t|\sigma| \le n|\sigma|\left(f^{\frac{1}{n+1}}-\sqrt{1-\left|\nabla^{\Sigma} u\right|^2}-t\right).
\end{equation}
Since $g_{\Sigma}(x)+r \mathrm{A} \ge 0$ by Lemma \ref{lem 2.3}, we take its trace and apply \eqref{(2.6)} to obtain \eqref{(2.5)}:
$$
0 \le n+r \operatorname{tr} \mathrm{A} \le n+n r|\sigma|\left(f^{\frac{1}{n+1}} -\sqrt{1-\left|\nabla^{\Sigma} u\right|^2}-t\right).
$$

We now prove \eqref{2.4'}. We first claim that
the function
\begin{equation}\label{non-inc}
s \longmapsto \frac{\left|\det D\Phi_s(x, y, t)\right|}{s^m\left(1+s|\sigma|\left(f(x)^{\frac{1}{n+1}}-\sqrt{1-\left|\nabla^{\Sigma} u(x)\right|^2}-t\right)\right)^n}
\end{equation}
is non-increasing on ($0, r$).

Choose a positively oriented local orthonormal frame in a neighborhood of $x$ such that $\{e_i\}_{i=1}^{n} \subset T_x \Sigma$, $\{\nu_\alpha\}_{\alpha=n+1}^{n+m} \subset T_x^{\perp} \Sigma$, and $\left\langle\bar{\nabla}_{e_i} \nu_\alpha, \nu_\beta\right\rangle = 0$ at the point $x$. We define the geodesic $\gamma(s) = \exp_x\left(s\left(\nabla^{\Sigma} u(x) + y + t \frac{\sigma}{|\sigma|}\right)\right)$ for $s \in [0, r]$, and denote by $E_i(s)$ and $N_\alpha(s)$ the parallel transports of $e_i$ and $\nu_\alpha$, respectively, along $\gamma$.

Define the Jacobi fields $X_i(s)$ and $Y_\alpha(s)$ along $\gamma$, determined by the initial conditions:
$$
\begin{cases}
X_i(0) = e_i \\
\langle D_s X_i(0), e_j \rangle = \mathrm{A}(e_i, e_j) \\
\langle D_s X_i(0), \nu_\beta \rangle = \langle \mathrm{II}(e_i, \nabla^\Sigma u(x)), \nu_\beta \rangle
\end{cases}
\quad \text{and} \quad
\begin{cases}
Y_\alpha(0) = 0 \\
D_s Y_\alpha(0) = \nu_\alpha.
\end{cases}
$$

Let $P(s)$ be the $(n+m) \times (n+m)$ matrix defined on $[0, r]$ by
\begin{align*}
P_{i j}(s) & =\left\langle X_i(s), E_j(s)\right\rangle, & & P_{i \beta}(s)=\left\langle X_i(s), N_\beta(s)\right\rangle, \\
P_{\alpha j}(s) & =\left\langle Y_\alpha(s), E_j(s)\right\rangle, & & P_{\alpha \beta}(s)=\left\langle Y_\alpha(s), N_\beta(s)\right\rangle.
\end{align*}

By the argument in \cite[Proposition 4.6]{Brendle2022}, it follows that $\det P(s) > 0$ for all $s \in (0, r)$, $\left|\det D\Phi_s(x, y, t)\right| = \det P(s)$, and
\begin{equation}\label{P0}
\lim_{s \rightarrow 0^{+}} s^{-m} \det P(s) = 1
\end{equation}
for every $s \in (0, r)$. Moreover, the matrix $Q(s):= P(s)^{-1} P^{\prime}(s)$
satisfies
$$
\operatorname{tr} Q(s) \le \frac{m}{s}+\sum_{i=1}^n \frac{\lambda_i}{1+s \lambda_i}
$$
for $s \in(0, r)$, where $\lambda_1, \ldots, \lambda_n$ are the eigenvalues of $\mathrm{A}$ w.r.t. $g_\Sigma$. By Lemma \ref{lem 2.3}, $1+s\lambda_i>0$ for $s\in (0, r)$. So by the concavity and monotonicity of the function $\frac{z}{1+z}$ for $z>-1$ and the estimate \eqref{(2.6)} for $\operatorname{tr} \mathrm{A}$, we have
\begin{equation}\label{(2.8)}
\begin{aligned}
\operatorname{tr} Q(s) \le \frac{m}{s}+\sum_{i=1}^n \frac{\lambda_i}{1+s \lambda_i}
\le& \frac{m}{s}+\frac{\sum_{i=1}^{n}\lambda_i}{1+\frac{s}{n} \sum_{i=1}^{n}\lambda_i}\\
=& \frac{m}{s}+\frac{\operatorname{tr} \mathrm{A}}{1+\frac{s}{n} \operatorname{tr} \mathrm{A}}\\
\le& \frac{m}{s}+\frac{n|\sigma|\left(f^{\frac{1}{n+1}}-\sqrt{1-\left|\nabla^{\Sigma} u\right|^2}-t\right)}{1+{s}|\sigma|\left(f^{\frac{1}{n+1}}-\sqrt{1-\left|\nabla^{\Sigma} u\right|^2}-t\right)}.
\end{aligned}
\end{equation}

Since $\left|\det \Phi_s(x, y, t)\right|=\det P(s)$ for $s\in\left(0, r\right)$ and $\frac{d}{d s} \log \det P(s)=\operatorname{tr} Q(s)$, it follows that
$$
\frac{d}{d s} \left(\frac{\det P(s)}{s^m\left(1+s|\sigma|(f^{\frac{1}{n+1}}-\sqrt{1-\left|\nabla^{\Sigma} u\right|^2}-t)\right)^n}\right) \le 0. \quad \text { on }(0, r).
$$
This proves \eqref{non-inc}.
Together with the fact that
$\lim_{s \rightarrow 0^{+}} s^{-m}\left|\det D \Phi_s(x, y, t)\right|=1$, we conclude that
$$
\left|\det D \Phi_r(x, y, t)\right| \le r^m\left(1+r|\sigma(x)|\left(f(x)^{\frac{1}{n+1}}-\sqrt{1-\left|\nabla^{\Sigma} u(x)\right|^2}-t\right)\right)^n.
$$
\end{proof}

\begin{lemma}\label{lem alg ineq}
Let $1<l\in \mathbb N$. If $a_i>0$ and $A_i>0$ for $i=1, \cdots, l$. Let $h$ be a strictly increasing concave function on $(0, \infty)$, then
$$\sum_{i=1}^{l}a_ih (A_i)<\left(\sum_{j=1}^{l}a_j\right) h\left(\sum_{i=1}^{l}A_i\right). $$
In paricular, this holds for the functions $\log x$ and $x^{\frac{1}{n+1}}$.
\end{lemma}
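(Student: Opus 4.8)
The plan is to observe that the inequality follows already from the strict monotonicity of $h$ alone, with concavity playing no essential role in the stated estimate. The key point is that, since $l \ge 2$ and every $A_j$ is strictly positive, each summand $A_i$ is strictly smaller than the total $S := \sum_{j=1}^l A_j$; indeed $S - A_i = \sum_{j \ne i} A_j > 0$. Because $h$ is strictly increasing on $(0,\infty)$ and both $A_i$ and $S$ lie in $(0,\infty)$, this gives $h(A_i) < h(S)$ for every $i$.

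Next I would multiply this strict inequality by $a_i > 0$ — which preserves the strict inequality regardless of the sign of $h(A_i)$ or $h(S)$ — and sum over $i = 1, \dots, l$, obtaining
$$\sum_{i=1}^l a_i h(A_i) < \sum_{i=1}^l a_i h(S) = \Bigl(\sum_{j=1}^l a_j\Bigr) h(S) = \Bigl(\sum_{j=1}^l a_j\Bigr) h\Bigl(\sum_{i=1}^l A_i\Bigr),$$
which is exactly the claim. For the ``in particular'' assertion, one just notes that $x \mapsto \log x$ and $x \mapsto x^{1/(n+1)}$ are both strictly increasing on $(0,\infty)$ (they are moreover concave, matching the hypotheses verbatim), so the inequality applies to them directly.

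There is essentially no obstacle here; the only minor point to keep in mind is that $h$ may take negative values (as $\log$ does on $(0,1)$), but this is harmless, since we only ever multiply the inequality $h(A_i) < h(S)$ by the positive number $a_i$ and never by a quantity of uncertain sign. Concavity would become relevant only if one wanted a quantitative refinement — for instance bounding the deficit $(\sum_j a_j)h(\sum_i A_i) - \sum_i a_i h(A_i)$ from below via a Jensen-type argument — but that strengthening is not needed for the reduction to the connected case used in the proof of Theorem \ref{thm log sob ineq}.
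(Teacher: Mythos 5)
Your proof is correct, and it takes a genuinely different (and simpler) route than the paper. The paper normalizes the weights $w_i = a_i/\sum_j a_j$, applies Jensen's inequality to the concave function $h$ to get $\sum_i w_i h(A_i) \le h(\sum_i w_i A_i)$, and then uses strict monotonicity together with $\sum_i w_i A_i < \sum_i A_i$ to conclude. You bypass Jensen entirely: since $l \ge 2$ and all $A_j > 0$, each $A_i$ is strictly less than $S = \sum_j A_j$, so $h(A_i) < h(S)$ by strict monotonicity alone, and multiplying by $a_i > 0$ and summing finishes the argument. Your observation that the concavity hypothesis is actually superfluous for the stated conclusion is accurate and is a genuine simplification; the paper's concavity route would matter only if one wanted the sharper intermediate bound $\sum_i a_i h(A_i) \le (\sum_j a_j) h\bigl(\sum_i w_i A_i\bigr)$, which is not used downstream. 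Both proofs are complete and correct.
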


\begin{proof}
Let $w_i=\frac{a_i}{\sum_{j=1}^l a_j}$. Then by concavity and monotonicity of $h$,
$$
\sum_{i=1}^l w_i h (A_i)\le h \left(\sum_{i=1}^l w_i A_i\right)<h \left(\sum_{i=1}^l A_i\right).
$$
Multiplying both sides by $\sum_{j=1}^l a_j$ then gives the result.
\end{proof}

We are now ready to prove Theorem \ref{thm log sob ineq}.
\begin{proof}[Proof of Theorem \ref{thm log sob ineq}]
From \eqref{(2.5)}, we note that for $(x, y, t) \in A_r$, then $t$ satisfies
\begin{equation}\label{ineq range t}
-\sqrt{1-\left|\nabla^{\Sigma} u(x)\right|^2}<t \le f(x)^{\frac{1}{n+1}}-\sqrt{1-\left|\nabla^{\Sigma} u(x)\right|^2}+\frac{1}{r|\sigma|}.
\end{equation}
Therefore,
\begin{equation}\label{ineq 1.14}
\begin{aligned}
& |\{p \in M: \alpha r<d(x, p)<r \text { for all } x \in \Sigma\}| \\
\le& \int_{\Omega} \int_{-\sqrt{1-\left|\nabla^{\Sigma} u\right|^2}}^{f^{\frac{1}{n+1}}-\sqrt{1-\left|\nabla^{\Sigma} u\right|^2}+\frac{1}{r|\sigma|}} \int_{Y_{\alpha, x, t} }
\left|\det D \Phi_r(x, y, t)\right| 1_{A_r}(x, y, t) d y d t d \mathrm{vol}_{\Sigma} \\
\le& \int_{\Omega} \int_{-\sqrt{1-\left|\nabla^{\Sigma} u\right|^2}}^{f^{\frac{1}{n+1}}-\sqrt{1-\left|\nabla^{\Sigma} u\right|^2}+\frac{1}{r|\sigma|}} \int_{Y_{\alpha, x, t} }
r^m\left(1+r|\sigma|\left(f^{\frac{1}{n+1}}-\sqrt{1-\left|\nabla^{\Sigma} u\right|^2}-t\right)\right)^n d y d t d \mathrm{vol}_{\Sigma},
\end{aligned}
\end{equation}
where $Y_{\alpha, x, t}:= \left\{ y \in \widetilde{T}_x^{\perp} \Sigma: \alpha^2 < \left| \nabla^{\Sigma} u(x) \right|^2 + |y|^2 + t^2 < 1 \right\}$.

By the inequality $b^{\frac{m-1}{2}}-a^{\frac{m-1}{2}} \le \frac{m-1}{2}(b-a)$ for $0 \le a \le b<1$ and $m \ge 3$, for every $x \in \Omega$, we have
\begin{equation}\label{ineq 1-alpha^2}
\begin{aligned}
|Y_{\alpha, x, t} |
=& \left|\mathbb{B}^{m-1}\right|\left(\left(1-\left|\nabla^{\Sigma} u\right|^2-t^2\right)_{+}^{\frac{m-1}{2}}-\left(\alpha^2-\left|\nabla^{\Sigma} u\right|^2-t^2\right)_{+}^{\frac{m-1}{2}}\right) \\
\le& \frac{m-1}{2}\left|\mathbb{B}^{m-1}\right|\left(1-\alpha^2\right).
\end{aligned}
\end{equation}
Therefore,
\begin{equation}\label{ineq 1.14'}
\begin{aligned}
& |\{p \in M: \alpha r<d(x, p)<r \text { for all } x \in \Sigma\} |\\
\le & \frac{m-1}{2}\left|\mathbb{B}^{m-1}\right|\left(1-\alpha^2\right) \int_{\Omega} \int_{-\sqrt{1-\left|\nabla^{\Sigma} u\right|^2}}^{f^{\frac{1}{n+1}}-\sqrt{1-\left|\nabla^{\Sigma} u\right|^2}+\frac{1}{r|\sigma|}}
r^m\left(1+r|\sigma|\left(f^{\frac{1}{n+1}}-\sqrt{1-\left|\nabla^{\Sigma} u\right|^2}-t\right)\right)^n d t d \mathrm{vol}_{\Sigma} \\
=& \frac{m-1}{2}\left|\mathbb{B}^{m-1}\right|\left(1-\alpha^2\right) \int_{\Omega}\frac{r^{m-1}}{(n+1)|\sigma|}\left(1+r|\sigma| f^{\frac{1}{n+1}}\right)^{n+1}d \mathrm{vol}_{\Sigma}.
\end{aligned}
\end{equation}

Dividing the above inequality by $r^{n+m}$ and letting $r \rightarrow \infty$, we conclude that
$$
\theta\left|\mathbb{B}^{n+m}\right|\left(1-\alpha^{n+m}\right) \le \frac{m-1}{2(n+1)}\left|\mathbb{B}^{m-1}\right|\left(1-\alpha^2\right) \int_{\Omega} f(x)|\sigma(x)|^n d \mathrm{vol}_{\Sigma}(x).
$$

Finally, we divide the previous inequality by $1-\alpha$ and let $\alpha \rightarrow 1$ to obtain
\begin{equation}\label{(2.11)}
\theta(n+m)\left|\mathbb{B}^{n+m}\right|
\le \frac{(m-1)\left|\mathbb{B}^{m-1}\right|}{n+1} \int_{\Omega} f|\sigma|^n
\le \frac{(m-1)\left|\mathbb{B}^{m-1}\right|}{n+1} \int_{\Sigma} f|\sigma|^n.
\end{equation}
This is equivalent to \eqref{(2.2)}. Note also that when $m=3$, by the identity $\left|\mathbb{S}^{n+2}\right|=\frac{2 \pi}{n+1}\left|\mathbb{S}^n\right|$, the constant $(n+1) \frac{\left|\mathbb{S}^{n+m-1}\right|}{\left|\mathbb{S}^{m-2}\right|}$ is simply $|\mathbb S^n|$.

Now, we suppose that $\Sigma$ is disconnected. Since \eqref{log sob intro} holds on each individual component $\Sigma_i$ of $\Sigma$, we take the sum over them and use Lemma \ref{lem alg ineq} for $h=\log$, $a_i=\int_{\Sigma_i}f |\sigma|$ and $A_i=\int_{\Sigma_i}f |\sigma|^n$ to finish the proof. The resulting inequality is strict.

Let us now consider the case where $m=1$ or $2$. In this case, by taking the product of $M^{n+m}$ with $\mathbb{R}^{3-m}$, we can view $\Sigma$ as a codimension $3$ submanifold. The right-hand side of the inequality remains unchanged, so we only need to determine the constant after $\theta$.
In this case, the constant is given by $C_{n, 3}=|\mathbb S^n|$.
\end{proof}

\section{Proof of Theorem \ref{thm log sob intro equality}}\label{sec proof thm log sob ineq equality case}
In this section, we examine the equality case of \eqref{log sob intro} when $m\le 3$.

As explained in the last paragraph of the proof of Theorem \ref{thm log sob ineq}, we can assume $m = 3$.
From the proof of inequality \eqref{log sob intro}, we know that $\Sigma$ has only one component. As in the proof, by rescaling $f$, we can assume that the normalization condition \eqref{normalization log Sobolev} holds, and $u$ is as defined in \eqref{eq: u}. Moreover, we are going to use the same notation and definitions as in the proof of \eqref{log sob intro}. Therefore, \eqref{(2.2)} is now an equality, i.e. we have
\begin{equation}\label{(2.2')}
\theta(n+1) \frac{\left|\mathbb{S}^{n+m-1}\right|}{\left|\mathbb{S}^{m-2}\right|} =\int_{\Sigma} f|\sigma|^n.
\end{equation}
It follows from \eqref{(2.11)} that $\Omega$ is dense in $\Sigma$.
\begin{lemma}\label{lem xyt=1}
For every $r>0, x \in \Omega, y \in \widetilde{T}_x^{\perp} \Sigma$ and $t \in[-1, 1]$ satisfying $\left|\nabla^{\Sigma} u(x)\right|^2+$ $|y|^2+t^2=1$, we have
$$
\left|\det D \Phi_r(x, y, t)\right| \ge r^m\left(1+r|\sigma(x)|\left(f(x)^{\frac{1}{n+1}}-\sqrt{1-\left|\nabla^{\Sigma} u(x)\right|^2}-t\right)\right)^n.
$$
\end{lemma}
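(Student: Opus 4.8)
The inequality is the reverse of the Jacobian bound \eqref{2.4'} of Lemma \ref{lem 2.4}, along the fibre-sphere $\{|\nabla^{\Sigma} u(x)|^2+|y|^2+t^2=1\}$, and it should be read as an equality-case statement built on \eqref{(2.2')}. I would argue by contradiction using the monotone quantity from the proof of Lemma \ref{lem 2.4}: set $\psi(s):=|\det D\Phi_s(x,y,t)|\big/\big(s^m(1+s|\sigma(x)|(f(x)^{\frac1{n+1}}-\sqrt{1-|\nabla^{\Sigma}u(x)|^2}-t))^n\big)$, which is non-increasing in $s$ with $\psi(0^{+})=1$, so $\psi(s)\le1$; the claim is exactly $\psi(r)\ge1$, i.e. $\psi\equiv1$, at fibre-sphere points with $x\in\Omega$. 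Suppose instead $\psi(r_0)<1$ at some such point $(x_0,y_0,t_0)$. By monotonicity $\psi(r)<1$ for all $r\ge r_0$, and by continuity of $\psi$ in $(x,y,t)$ there are $\varepsilon>0$ and a fibre-neighbourhood $W$ of $(x_0,y_0,t_0)$ with $|\det D\Phi_r|\le(1-\varepsilon)\,r^m(1+r|\sigma|(f^{\frac1{n+1}}-\sqrt{1-|\nabla^{\Sigma}u|^2}-t))^n$ on $W$ for all large $r$.

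I would then re-run the chain \eqref{ineq 1.14}--\eqref{(2.11)} keeping the indicator $1_{A_r}$ and the exact Jacobian, splitting off the contribution of $W$. Equality \eqref{(2.2')} forces the defects discarded in the original chain — the covering/injectivity defect of $\Phi_r|_{A_r}$ and the Jacobian defect $r^m(1+\cdots)^n-|\det D\Phi_r|$ on $A_r$ (the slice step $|Y_{\alpha,x,t}|\le\frac{m-1}{2}|\mathbb{B}^{m-1}|(1-\alpha^2)$ being an equality once one reduces to $m=3$ as in the proof of \eqref{(2.2')}) — to be negligible relative to $r^{n+m}(1-\alpha)$ as $r\to\infty$ then $\alpha\to1$. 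The decisive point is a concentration phenomenon: the $t$-weight $r^m(1+r|\sigma|(f^{\frac1{n+1}}-\sqrt{1-|\nabla^{\Sigma}u|^2}-t))^n\,dt$ concentrates, as $r\to\infty$, at $t=-\sqrt{1-|\nabla^{\Sigma}u(x)|^2}$ — exactly the value forcing $|\nabla^{\Sigma}u(x)|^2+t^2=1$ — while the surviving slices $Y_{\alpha,x,t}$ collapse onto the fibre-sphere as $\alpha\to1$; hence a fibre-neighbourhood such as $W$ carries a fixed positive fraction of the leading term $r^{n+m}(1-\alpha)$ of the estimate. The extra factor $(1-\varepsilon)$ on the $W$-part then survives the division by $r^{n+m}(1-\alpha)$ and the limits, turning \eqref{(2.11)} into a strict inequality $\theta(n+1)\frac{|\mathbb{S}^{n+m-1}|}{|\mathbb{S}^{m-2}|}<\int_{\Sigma}f|\sigma|^n$, contradicting \eqref{(2.2')}.

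I expect the main obstacle to be the concentration/localization step: one must quantify, uniformly for $x$ in compact subsets of $\Omega$, the concentration of the $t$-integral as $r\to\infty$ and the collapse of $Y_{\alpha,x,t}$ as $\alpha\to1$, and interchange the two limits so that a prescribed fibre-neighbourhood genuinely contributes at leading order. A related point needing care is that $\psi$ and the trace estimate \eqref{(2.6)} behind its monotonicity are naturally attached to points of $A_r$; at a true fibre-sphere point one must therefore extend the monotonicity to the boundary by continuity, or approximate $(x_0,y_0,t_0)$ by interior points of $A_r$ — legitimate because \eqref{(2.2')} forces $\Phi_r|_{A_r}$ to fill the shells $\{p\in M:\alpha r<d(x,p)<r\text{ for all }x\in\Sigma\}$ up to a null set, whence (via a Vitali-type covering) such approximating points cluster on the fibre-sphere. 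The rest is bookkeeping already in the proof of \eqref{(2.2)}, carried out in the spirit of Brendle's equality analysis in \cite{Brendle2023}. Once the lemma holds, $\psi\equiv1$ forces equality throughout \eqref{(2.6)}--\eqref{(2.8)} — flatness of $M$ along the relevant geodesics, coincidence of all eigenvalues of $\mathrm{A}$, and equality in Lemma \ref{2.1} — which drives the deduction of umbilicity, $\partial\Sigma=\emptyset$, constancy of $f$, and $\int_{\Sigma}|\sigma|^n=\theta|\mathbb{S}^n|$ in the remainder of Section \ref{sec proof thm log sob ineq equality case}.
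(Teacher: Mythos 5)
Your overall strategy matches the paper's: define the ratio $\psi$, note it is non-increasing with $\psi(0^{+})=1$, assume $\psi(r_0)<1$ at some fibre-sphere point, transfer the strict bound by continuity to a neighbourhood $V$ with a factor $(1-\varepsilon)$, and then show that the $V$-contribution survives the limits $r\to\infty$, $\alpha\to 1$ at the order $r^{n+m}(1-\alpha)$, contradicting \eqref{(2.2')}. This is exactly the architecture of the paper's proof of Lemma \ref{lem xyt=1}, and your accounting of the equality chain \eqref{ineq 1.14}--\eqref{(2.11)} (in particular that the slice bound is an identity once reduced to $m=3$) is also right.

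However, the step you flag as ``decisive'' — the concentration phenomenon — is wrong, in two ways. First, the rescaled $t$-weight does \emph{not} concentrate at $t=-\sqrt{1-|\nabla^{\Sigma}u(x)|^{2}}$. Writing $\tau=f^{\frac{1}{n+1}}-\sqrt{1-|\nabla^{\Sigma}u|^{2}}$, one has
$r^{-n-m}\,r^m\bigl(1+r|\sigma|(\tau-t)\bigr)^n \to |\sigma|^n(\tau-t)_{+}^{\,n}$
locally uniformly, so after normalization the $t$-measure converges to the density proportional to $(\tau-t)^n$ spread over the \emph{entire} $O(1)$ interval $(-\sqrt{1-|\nabla^\Sigma u|^2},\,\tau)$; no Dirac mass forms. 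Second, $t=-\sqrt{1-|\nabla^{\Sigma}u(x)|^{2}}$ is precisely the degenerate pole of the fibre-sphere where the only point with $|\nabla^{\Sigma}u|^{2}+|y|^{2}+t^{2}=1$ is $y=0$, so the annulus $Y_{\alpha,x,t}$ is empty there and a neighbourhood of such a point contributes \emph{nothing}. Your heuristic therefore localizes at the single place where the argument cannot close. The paper's mechanism is the opposite of concentration: it first assumes without loss of generality that $y_0\neq 0$ (an open condition), so the slice $\{|y|=|y_0|\}$ of the fibre-sphere at level $t_0$ is a genuine $(m-2)$-sphere; this gives $|V\cap Y_{\alpha,x_0,t_0}|\gtrsim (1-\alpha)$. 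For the $t$-direction, one only needs that the non-negative, monotone-in-$t$ integrand makes \emph{any} fixed $\delta_2$-interval inside $T_{x,r}$ contribute at least the tail estimate $\frac{1}{r|\sigma|(n+1)}\bigl((1+r|\sigma|\delta_2)^{n+1}-1\bigr)=O(r^{n})$, which is the same order as the total — no concentration is needed or true. Together with a small $\rho$-ball in $x$, these factors produce the positive lower bound for $\lim_{\alpha\to1}\lim_{r\to\infty}\frac{I(\alpha,r)}{r^{n+m}(1-\alpha)}$ and the contradiction. As written, your proposal does not provide a valid route to this lower bound.

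A smaller remark: your worry about extending the monotonicity of $\psi$ to fibre-sphere boundary points is not a genuine obstacle. The paper never applies the monotonicity at a boundary point; it passes to the open neighbourhood $V$ by continuity of $|\det D\Phi_{r_0}|$ in $(x,y,t)$ and then uses Lemma \ref{lem 2.4} only on $V\cap A_r$, where all hypotheses hold. The Vitali-covering/approximation machinery you suggest is unnecessary.
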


\begin{proof}
Assume on the contrary that there exists $x_{0}\in \Omega$, $y_{0}\in \widetilde{T}_{x}^{\perp}\Sigma$ and $t_{0}\in [-1, 1]$ that satisfy $|\nabla^\Sigma u(x_{0})|^{2}+|y_{0}|^{2}+t_{0}^{2}=1$, such that
$$
\left|\det D \Phi_{r_{0}}(x_{0}, y_{0}, t_{0})\right| < r_{0}^m\left(1+r_{0}|\sigma(x_{0})|\left(f(x_{0})^{\frac{1}{n+1}}-\sqrt{1-\left|\nabla^{\Sigma} u(x_{0})\right|^2}-t_{0}\right)\right)^n
$$
for some $r_{0}>0$. Since this is an open condition, we can without loss of generality assume that $y_0\ne0$. Then by continuity, there exists $\varepsilon\in(0, 1)$ and a neighborhood $V$ of $(x_{0}, y_{0}, t_{0})$ in $\widetilde T^\perp \Sigma\times \mathbb R$, such that
$$
\left|\det D \Phi_{r_{0}}(x, y, t)\right| < (1-\varepsilon)r_{0}^m\left(1+r_{0}|\sigma(x)|\left(f(x)^{\frac{1}{n+1}}-\sqrt{1-\left|\nabla^{\Sigma} u(x)\right|^2}-t\right)\right)^n \text{ on } V.
$$
It then follows from Lemma \ref{lem 2.4} that for every $r > r_{0}$,
$$
\left|\det D \Phi_{r}(x, y, t)\right| < (1-\varepsilon)r^m\left(1+ r |\sigma(x)| \left(f(x)^{\frac{1}{n+1}}-\sqrt{1-\left|\nabla^{\Sigma} u(x)\right|^2}-t\right)\right)^n \text{ on } V\cap A_{r}.
$$
Let $Y_\alpha=Y_{\alpha, x, t}:=\left\{y \in \widetilde{T}_x^{\perp} \Sigma: \alpha^2<\left|\nabla^{\Sigma} u(x)\right|^2+|y|^2+t^2<1\right\}$, regarded as a subset of $\widetilde T^\perp \Sigma\times \mathbb R$.
Consequently, by applying Lemma \ref{lem 2.2} and \eqref{ineq range t}, and following the reasoning in \eqref{ineq 1.14} and \eqref{ineq 1.14'}, we have
\begin{equation}\label{ineq est equality case}
\begin{aligned}
& |\{p \in M: \alpha r<d(x, p)<r \text { for all } x \in \Sigma\}| \\
\le& \int_{\Omega} \int_{-\sqrt{1-\left|\nabla^{\Sigma} u\right|^2}}^{f^{\frac{1}{n+1}}-\sqrt{1-\left|\nabla^{\Sigma} u\right|^2}+\frac{1}{r|\sigma|}} \int_{Y_\alpha}
\left|\det D \Phi_r(x, y, t)\right| 1_{A_r}(x, y, t) d y d t d \mathrm{vol}_{\Sigma} \\
\le& \int_{\Omega} \int_{-\sqrt{1-\left|\nabla^{\Sigma} u\right|^2}}^{f^{\frac{1}{n+1}}-\sqrt{1-\left|\nabla^{\Sigma} u\right|^2}+\frac{1}{r|\sigma|}} \int_{Y_\alpha}
\big(1-\varepsilon\cdot 1_{V}(x, y, t)\big)r^m\left(1+r|\sigma|\left(f^{\frac{1}{n+1}}-\sqrt{1-\left|\nabla^{\Sigma} u\right|^2}-t\right)\right)^n d y d t d \mathrm{vol}_{\Sigma}\\
\le& \frac{m-1}{2}\left|\mathbb{B}^{m-1}\right|\left(1-\alpha^2\right) \int_{\Omega} \int_{-\sqrt{1-\left|\nabla^{\Sigma} u\right|^2}}^{f^{\frac{1}{n+1}}-\sqrt{1-\left|\nabla^{\Sigma} u\right|^2}+\frac{1}{r|\sigma|}}
r^m\left(1+r|\sigma|\left(f^{\frac{1}{n+1}}-\sqrt{1-\left|\nabla^{\Sigma} u\right|^2}-t\right)\right)^n d t d \mathrm{vol}_{\Sigma}\\
& - \varepsilon \int_{\Omega} \int_{-\sqrt{1-\left|\nabla^{\Sigma} u\right|^2}}^{f^{\frac{1}{n+1}}-\sqrt{1-\left|\nabla^{\Sigma} u\right|^2}+\frac{1}{r|\sigma|}} \int_{Y_\alpha}
1_{V}(x, y, t) r^m\left(1+r|\sigma|\left(f^{\frac{1}{n+1}}-\sqrt{1-\left|\nabla^{\Sigma} u\right|^2}-t\right)\right)^n d y d t d \mathrm{vol}_{\Sigma}\\
=&:J(\alpha, r)-\varepsilon I(\alpha, r)
\end{aligned}
\end{equation}
for all $r>r_{0}$.

We know from \eqref{(2.11)} that
\begin{equation}\label{ineq J}
\lim_{\alpha\to1}\lim_{r\to\infty}\frac{1}{1-\alpha}\cdot\frac{1}{r^{n+m}}J(\alpha, r)\le \frac{(m-1)\left|\mathbb{B}^{m-1}\right|}{n+1} \int_{\Sigma} f|\sigma|^n.
\end{equation}
We now estimate $\lim_{\alpha \rightarrow 1} \lim_{r \rightarrow \infty} \frac{1}{1-\alpha} \cdot \frac{1}{r^{n+m}}I(\alpha, r)$ from below. As $V$ is open, for $\alpha$ close enough to $1$,
$V\cap Y_{\alpha, x_0, t_0}$ contains an open set
\begin{align*}
\{(\rho, \theta): \alpha^2-|\nabla^{\Sigma}u(x_0)|^2-t_0^2 <\rho^2<1-|\nabla^{\Sigma}u(x_0)|^2-t_0^2, \theta \in \mathcal O \}
\end{align*}
in polar coordinates of $\widetilde T_{x_0}^\perp \Sigma$, where $\mathcal O\subset \mathbb{S}^{m-2}$ is an open set containing $\frac{y_0}{|y_0|}$.

It then follows that for $\alpha$ sufficiently close to $1$,
$$
|V\cap Y_{\alpha, x_0, t_0}|\ge\frac{|\mathcal O|}{m-1}\left[\left(1-|\nabla^{\Sigma} u(x_0)|^2-t_0^2\right)^{\frac{m-1}{2}}-\left(\alpha^2-|\nabla^{\Sigma} u(x_0)|^2-t_0^2\right)^{\frac{m-1}{2}}\right]
$$
The function $\varphi(s):=\frac{1}{m-1}\left(s^2-\left|\nabla^{\Sigma} u\left(x_0\right)\right|^2-t_0^2\right)^{\frac{m-1}{2}}$ satisfies
$\varphi^{\prime}(1)>0$. It follows that for $\alpha$ close enough to $1$, $|V\cap Y_{\alpha, x_0, t_0}|\ge \frac{1}{2}|\mathcal O|\varphi'(1)(1-\alpha)=:2\delta_1(1-\alpha)$ for some $\delta_1>0$ which is independent of $\alpha$.
We may shrink $V$ if necessary to ensure that for all $(x, y, t) \in V$, $|V \cap Y_{\alpha, x, t}| \ge \delta_1 (1 - \alpha)$ holds for all $\alpha$ close to $1$.


From this, we deduce that for $\alpha$ close to $1$,
\begin{align}\label{ineq lower bound 2}
\frac{1}{1-\alpha} \cdot \frac{1}{r^{n+m}} I(\alpha, r)\ge \frac{\delta_1}{r^n}\int_{B_\rho (x_0) }\int_{T_{x, r}\cap V}
\left(1+r|\sigma|\left(f^{\frac{1}{n+1}}-\sqrt{1-\left|\nabla^{\Sigma} u\right|^2}-t\right)\right)^n dt d\operatorname{vol}_{\Sigma}
\end{align}
for some $\rho>0$, where $T_{x, r}=
\left\{t\in \mathbb R:-\sqrt{1-\left|\nabla^{\Sigma} u(x)\right|^2}<t<f(x)^{\frac{1}{n+1}}-\sqrt{1-\left|\nabla^{\Sigma} u(x)\right|^2}+\frac{1}{r|\sigma(x)|}\right\}$.

Note that the integrand in \eqref{ineq lower bound 2} is non-negative and is decreasing in $t$, and so if we let $\tau(x)= f(x)^{\frac{1}{n+1}}-\sqrt{1-\left|\nabla^{\Sigma} u(x)\right|^2}$, we have
\begin{align*}
& \int_{T_{x_0, r} \cap V} \left(1 + r|\sigma(x_0)|\left(f(x_0)^{\frac{1}{n+1}} - \sqrt{1 - \left|\nabla^{\Sigma} u(x_0)\right|^2} - t\right)\right)^n dt \\
\ge & \int_{\tau(x_0) + \frac{1}{r|\sigma(x_0)|} - \delta_2}^{\tau(x_0) + \frac{1}{r|\sigma(x_0)|}} \left(1 + r|\sigma(x_0)|\left(f(x_0)^{\frac{1}{n+1}} - \sqrt{1 - \left|\nabla^{\Sigma} u(x_0)\right|^2} - t\right)\right)^n dt \\
=& \frac{1}{r|\sigma(x_0)|(n+1)}\left(\left(1 + r|\sigma(x_0)| \delta_2\right)^{n+1} - 1\right),
\end{align*}
where $\delta_2>0$ is chosen such that $V\cap \{x=x_0, y=y_0\}$ contains the segment $(t_0-\delta_2, t_0+\delta_2)$.
So by continuity,
\begin{align*}
\int_{T_{x, r} \cap V}\left(1+r|\sigma|\left(f^{\frac{1}{n+1}}-\sqrt{1-\left|\nabla^{\Sigma} u\right|^2}-t\right)\right)^n d t
\ge& \frac{1}{2r\left|\sigma\left(x_0\right)\right|(n+1)}\left(\left(1+r\left|\sigma\left(x_0\right)\right| \delta_2\right)^{n+1}-1\right)\\
=&
\frac{ \delta_2^{n+1}r^n}{2(n+1)} |\sigma(x_0)|^n+o\left(r^n\right)
\end{align*}
for $x$ near $x_0$, where $o(r)$ is a quantity such that $\frac{o\left(r^n\right)}{r^n} \rightarrow 0$ as $r \rightarrow \infty$.

From this and \eqref{ineq lower bound 2}, it is then not hard to see that for some $C>0$,
\begin{align*}
\lim_{\alpha \rightarrow 1} \lim_{r \rightarrow \infty} \frac{1}{1-\alpha} \cdot \frac{1}{r^{n+m}}I(\alpha, r)\ge C >0.
\end{align*}

In view of \eqref{ineq est equality case} and \eqref{ineq J}, we obtain
\begin{align*}
\theta(n+m)\left|\mathbb{B}^{n+m}\right|\le \frac{(m-1)\left|\mathbb{B}^{m-1}\right|}{n+1} \int_{\Sigma} f|\sigma|^n-C\varepsilon.
\end{align*}
This means \eqref{(2.2')} is a strict inequality, a contradiction.
\end{proof}

Now we are ready to prove the equality case of Theorem \ref{thm log sob ineq}.

\begin{proof}[Proof of Theorem \ref{thm log sob intro equality}]
Fix $(x, y, t)$ such that $|\nabla^\Sigma u(x)|^{2}+|y|^{2}+t^{2}=1$.

Define $\mathrm{A}=D_{\Sigma}^2 u(x)-\langle\mathrm{II}(x), y\rangle-t\langle\mathrm{II}(x), \frac{\sigma}{|\sigma|}\rangle$. There exists small enough $s_{0}>0$, such that $g_{\Sigma}+sA > 0$ for all $0 < s < s_{0}$.
We may then define the vectors $\{e_1, \dots, e_n, \nu_1, \dots, \nu_m\}$ and hence the Jacobian matrix $P(s)$ in a manner analogous to their construction in the proof of Lemma \ref{lem 2.4}.

We have $|\det D\Phi_{s}(x, y, t)|=|\det P(s)|\ge s^m\left(1+s|\sigma(x)|\left(f(x)^{\frac{1}{n+1}}-\sqrt{1-\left|\nabla^{\Sigma} u(x)\right|^2}-t\right)\right)^n$ by Lemma \ref{lem xyt=1}, and $P(s)>0$ for small enough $s$.
Hence
\begin{align*}
\det P(s)\ge s^m\left(1+s|\sigma(x)|\left(f(x)^{\frac{1}{n+1}}-\sqrt{1-\left|\nabla^{\Sigma} u(x)\right|^2}-t\right)\right)^n>0
\end{align*}
for $s\in (0, s_0)$, by making $s_0$ smaller if necessary.

In particular, as in the proof of Lemma \ref{lem 2.4}, we can define $Q(s) = P(s)^{-1} P'(s)$ for $s\in (0, s_{0})$, which is symmetric.
Let $\lambda_i$ be the eigenvalues of $\mathrm{A}$. Note that $1+s\lambda_i>0$ for $s\in (0, s_0)$, and so the same computation as in \eqref{(2.8)} gives
\begin{equation*}
\begin{aligned}
\operatorname{tr} Q(s)
\le \frac{m}{s}+\sum_{i=1}^n \frac{\lambda_i}{1+s \lambda_i}
\le& \frac{m}{s}+\frac{\operatorname{tr} \mathrm{A}}{1+\frac{s}{n} \operatorname{tr} \mathrm{A}}\\
\le& \frac{m}{s}+\frac{n|\sigma|\left(f^{\frac{1}{n+1}}-\sqrt{1-\left|\nabla^{\Sigma} u\right|^2}-t\right)}{1+{s}|\sigma|\left(f^{\frac{1}{n+1}}-\sqrt{1-\left|\nabla^{\Sigma} u\right|^2}-t\right)},
\end{aligned}
\end{equation*}
which in turn gives
$$
\det P(s) \le s^m\left(1+s|\sigma|(f^{\frac{1}{n+1}}-\sqrt{1-\left|\nabla^{\Sigma} u\right|^2}-t)\right)^n,
$$
as in the proof of Lemma \ref{lem 2.4}.

Combining with Lemma \ref{lem xyt=1}, we obtain
$$
\det P(s) = s^m\left(1+s|\sigma|(f^{\frac{1}{n+1}}-\sqrt{1-\left|\nabla^{\Sigma} u\right|^2}-t)\right)^n
$$
for $s \in (0, s_{0})$. Therefore,
\begin{equation*}
\begin{aligned}
\operatorname{tr} Q(s) = \frac{m}{s}+\sum_{i=1}^n \frac{\lambda_i}{1+s \lambda_i} = \frac{m}{s}+\frac{\operatorname{tr} \mathrm{A}}{1+\frac{s}{n} \operatorname{tr} \mathrm{A}} = \frac{m}{s}+\frac{n|\sigma|\left(f^{\frac{1}{n+1}}-\sqrt{1-\left|\nabla^{\Sigma} u\right|^2}-t\right)}{1+{s}|\sigma|\left(f^{\frac{1}{n+1}}-\sqrt{1-\left|\nabla^{\Sigma} u\right|^2}-t\right)}.
\end{aligned}
\end{equation*}
From the second equality, we deduce that all eigenvalues of $\mathrm{A}$ have the same value $|\sigma|\left(f^{\frac{1}{n+1}}-\sqrt{1-\left|\nabla^{\Sigma} u\right|^2}-t\right)$, and the third equality implies that $f=1$ and $\nabla^{\Sigma}u=0$ from the proof of Lemma \ref{2.1}. In addition, $\Omega$ is a dense open set in $\Sigma$ from \eqref{(2.11)}. Therefore, $f \equiv 1$ and $u$ is constant in $\Sigma$. It follows that $\partial \Sigma=\emptyset$ as the boundary condition in \eqref{eq: u} cannot be satisfied otherwise. It also follows that $\Sigma$ is umbilical since the eigenvalues of $\mathrm{A}=-\langle\mathrm{II}(x), y\rangle-t\langle\mathrm{II}(x), \frac{\sigma(x)}{|\sigma(x)|}\rangle$ are $-t |\sigma(x)|$ for arbitrary $(y, t)$ satisfying $|y|^{2}+t^{2}=1$.
\end{proof}
\begin{remark}
From the above proof, we see that $f$ is constant and $\partial \Sigma=\emptyset$. Therefore, the equality case is reduced to
$$\int_{\Sigma} |\sigma|^n = \theta \, |\mathbb{S}^n|, $$
which means that the equality case of \eqref{ineq AFM} is attained. Moreover, by \cite[Theorem~1.2]{JiKwong2025}, one can further characterize the metric on the image under the normal exponential map of $\{(x, z) \in T^{\perp}\Sigma: \langle z, \sigma(x)\rangle \le 0 \}$.
For brevity, we do not reproduce the full statement here, and refer the interested reader to \cite{JiKwong2025}.
\end{remark}

\section{Proof of Theorem \ref{thm nonneg Sobolev}}\label{sec proof thm nonneg Sobolev}
In this section, we present the proof of Theorem \ref{thm nonneg Sobolev}. As the argument closely parallels that of Theorem \ref{thm log sob ineq}, we omit certain repetitive details for brevity.

Assume first $\Sigma$ is connected. Observe that the inequality is invariant under a rescaling of the Riemannian metric $g$. Therefore, we can rescale $g$ such that
\begin{equation}\label{normalization3}
\int_{\Sigma}\left(\left|\nabla^{\Sigma} f\right|+f|H|\right)+\int_{\partial \Sigma} f=\int_{\Sigma} n f^\beta.
\end{equation}

Denote by $\eta$ the co-normal to $\partial \Sigma$. For the given function $f$, let $u$ to be the solution of the problem
\begin{align*}
\operatorname{div}_{\Sigma}\left(f \nabla^{\Sigma} u\right)=n f^\beta-\left|\nabla^{\Sigma} f\right|-f|H| \text { on } \Sigma \\
\left\langle\nabla^{\Sigma} u, \eta\right\rangle=1 \quad \text { on } \partial \Sigma \text { if } \partial \Sigma \ne \emptyset.
\end{align*}




For each $x \in \Sigma$, we define $T_x^\perp \Sigma$ and $\widetilde{T}_x^\perp \Sigma$ as before. We also define the sets $\Omega$, $U$, and $A_r$, as well as the map $\Phi_r$, exactly as in the proof of Theorem \ref{thm log sob ineq}; see equations \eqref{def Omega U Ar} and \eqref{def Phi r}.

The following two lemmas, taken from \cite[Lemma 4.2, Lemma 4.4]{Brendle2023} respectively, continue to hold.
\begin{lemma}\label{lem nonneg A}
For every $0 \le \alpha<1$ and $r>0$, the set
$$
\{p \in M: \alpha r<d(x, p)<r \text { for all } x \in \Sigma\}
$$
is contained in
$$
\left\{\Phi_r(x, y, t):(x, y, t) \in A_r \text { and }\left|\nabla^{\Sigma} u(x)\right|^2+|y|^2+t^2>\alpha^2\right\}.
$$
\end{lemma}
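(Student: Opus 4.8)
The plan is to run the Alexandrov--Bakelman--Pucci-type argument used by Brendle, which is essentially identical to the proof of Lemma \ref{lem 2.2}: the only difference between the present situation and that one is the linear elliptic equation satisfied by $u$, and that equation plays no role in this particular statement. So fix a point $p$ in the set $\{p\in M:\alpha r<d(x,p)<r\text{ for all }x\in\Sigma\}$, and consider the function $h\colon\Sigma\to\mathbb R$, $h(z)=r\,u(z)+\tfrac12 d(z,p)^2$. Since $\Sigma$ is compact, $h$ attains its minimum at some $x\in\Sigma$, and the goal is to exhibit $y\in\widetilde{T}_x^{\perp}\Sigma$ and $t\in\mathbb R$ with $(x,y,t)\in A_r$, $|\nabla^{\Sigma} u(x)|^2+|y|^2+t^2>\alpha^2$, and $p=\Phi_r(x,y,t)$.

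First I would check that the minimizer $x$ is an interior point. If $x\in\partial\Sigma$, minimality forces $\langle\nabla^{\Sigma} h(x),\eta\rangle\le 0$; but the Neumann condition $\langle\nabla^{\Sigma} u,\eta\rangle=1$ together with the pointwise bound $|\nabla^{\Sigma}(\tfrac12 d(\cdot,p)^2)(x)|\le d(x,p)<r$ gives $\langle\nabla^{\Sigma} h(x),\eta\rangle=r+\langle\nabla^{\Sigma}(\tfrac12 d(\cdot,p)^2)(x),\eta\rangle\ge r-d(x,p)>0$, a contradiction. Hence $x\notin\partial\Sigma$, and since $\tfrac12 d(\cdot,p)^2$ is semiconcave and has $-r\nabla^{\Sigma} u(x)$ in its (Fr\'echet) subdifferential at $x$, it is in fact differentiable there; consequently $p$ is not in the cut locus of $x$, so $w:=\exp_x^{-1}(p)$ is well defined with $|w|=d(x,p)$.

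Next, the interior first-order condition $\nabla^{\Sigma} h(x)=0$ reads $r\,\nabla^{\Sigma} u(x)=w^\top$, the component of $w$ tangent to $\Sigma$. Decomposing the normal component as $w^\perp=r\,y+r\,t\,\tfrac{\sigma(x)}{|\sigma(x)|}$ with $y\in\widetilde{T}_x^{\perp}\Sigma$ and $t\in\mathbb R$ defines the pair $(y,t)$, and then $p=\exp_x\!\big(r\nabla^{\Sigma} u(x)+ry+rt\tfrac{\sigma(x)}{|\sigma(x)|}\big)=\Phi_r(x,y,t)$. The Pythagorean identity $d(x,p)^2=|w|^2=r^2\big(|\nabla^{\Sigma} u(x)|^2+|y|^2+t^2\big)$ combined with $\alpha r<d(x,p)<r$ yields $\alpha^2<|\nabla^{\Sigma} u(x)|^2+|y|^2+t^2<1$; the upper bound together with $x\notin\partial\Sigma$ shows $(x,y,t)\in U$, while the lower bound is exactly the extra requirement in the statement.

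It remains to verify $(x,y,t)\in A_r$. Substituting $\exp_x\!\big(r\nabla^{\Sigma} u(x)+ry+rt\tfrac{\sigma(x)}{|\sigma(x)|}\big)=p$ and $r^2\big(|\nabla^{\Sigma} u(x)|^2+|y|^2+t^2\big)=d(x,p)^2$ into the defining inequality of $A_r$ in \eqref{def Omega U Ar} turns it into $r\,u(z)+\tfrac12 d(z,p)^2\ge r\,u(x)+\tfrac12 d(x,p)^2$ for all $z\in\Sigma$, which is precisely the statement that $x$ minimizes $h$. Hence $(x,y,t)\in A_r$ and the asserted inclusion follows. The one genuinely delicate point is the regularity input used above — that at the minimizer $x$ the squared distance to $p$ is differentiable, so that $w=\exp_x^{-1}(p)$ makes sense and the first-order condition holds classically; this is handled by the standard semiconcavity argument, and everything else is elementary.
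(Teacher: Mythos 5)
Your argument is correct and reproduces the Alexandrov--Bakelman--Pucci-type proof from Brendle that the paper cites (the paper does not re-prove this lemma; it simply invokes \cite[Lemma~4.2]{Brendle2023}). One small imprecision: differentiability of the restriction of $\tfrac12 d(\cdot,p)^2$ to $\Sigma$ at the minimizer $x$ does \emph{not} by itself imply that $p$ avoids the cut locus of $x$ in $M$ (two minimizing geodesics whose initial velocities differ only in the normal direction would still give a differentiable restriction). However, this does not affect the conclusion: by Hopf--Rinow you may simply pick any $w$ with $\exp_x(w)=p$ and $|w|=d(x,p)$; the superdifferential of $\tfrac12 d(\cdot,p)^2\big|_\Sigma$ at $x$ contains $-w^{\top}$ for every such $w$, and since you have shown this restriction is differentiable at $x$ with derivative $-r\nabla^\Sigma u(x)$, every such lift has $w^{\top}=r\nabla^\Sigma u(x)$, after which the rest of your argument (definition of $(y,t)$, the Pythagorean identity, and the verification that $(x,y,t)\in A_r$) goes through verbatim.
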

\begin{lemma}\label{lem 2.3'}
For every $(x, y, t) \in A_r$, we have
$$
g_{\Sigma}(x)+r\left(D_{\Sigma}^2 u(x)-\langle\mathrm{II}(x), y\rangle-t\left\langle\mathrm{II}(x), \frac{H}{|H|}\right\rangle\right) \ge 0
$$
\end{lemma}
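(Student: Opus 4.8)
The plan is to read the conclusion off the optimality that is built into the definition of $A_r$, following the proof of Lemma~\ref{lem 2.3} (equivalently \cite[Lemma~4.4]{Brendle2023}) essentially verbatim; note that $\sigma=\tfrac1n H$ gives $\tfrac{\sigma}{|\sigma|}=\tfrac{H}{|H|}$, so the sets $\Omega,U,A_r$ and the map $\Phi_r$ used here coincide with those of Section~\ref{sec proof thm log sob ineq} and the assertion is literally Lemma~\ref{lem 2.3} rewritten with $H/|H|$ in place of $\sigma/|\sigma|$. Fix $(x,y,t)\in A_r$, set $v=r\big(\nabla^{\Sigma}u(x)+y+t\tfrac{H(x)}{|H(x)|}\big)$, $p=\Phi_r(x,y,t)=\exp_x v$, and $\Psi(z)=ru(z)+\tfrac12 d(z,p)^2$ on $\Sigma$. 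Since the three summands in $v$ are mutually orthogonal, $|v|^2=r^2\big(|\nabla^{\Sigma}u(x)|^2+|y|^2+t^2\big)$; testing the inequality defining $A_r$ at $z=x$ then forces $d(x,p)=|v|$, so $s\mapsto\exp_x(sv)$ is a minimizing geodesic on $[0,1]$, the $A_r$-condition says exactly that $\Psi\ge\Psi(x)$ on $\Sigma$, and $x$ is interior because $A_r\subset U$. Thus I would start from the single fact that $x$ is an interior global minimizer of $\Psi$.

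The next step is to turn this into a pointwise inequality of bilinear forms on $T_x\Sigma$. If $p$ is not in the cut locus of $x$ this is immediate: $d(\cdot,p)^2$ is smooth near $x$, $\bar\nabla\big(\tfrac12 d(\cdot,p)^2\big)(x)=-v$, and the Hessian comparison in nonnegative sectional curvature gives $\bar\nabla^2\big(\tfrac12 d(\cdot,p)^2\big)(x)\le g$, so $D_\Sigma^2\Psi(x)\ge0$ unwinds directly. To cover the general case I would introduce the smooth upper barrier $\phi_\delta(z)=\big(d(z,\gamma(1-\delta))+\delta|v|\big)^2$, where $\gamma(s)=\exp_x(sv)$ and $\delta>0$ is small: because $\gamma|_{[0,1]}$ is minimizing, $\gamma(1-\delta)$ lies strictly before the cut point of $x$, so $\phi_\delta$ is smooth near $x$; by the triangle inequality $\phi_\delta\ge d(\cdot,p)^2$ with equality at $x$; $\bar\nabla\phi_\delta(x)=-2v$; and the Hessian comparison for $d(\cdot,\gamma(1-\delta))$ yields $\bar\nabla^2\phi_\delta(x)\le\tfrac{2}{1-\delta}\,g$. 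Since $\tilde\Psi:=ru+\tfrac12\phi_\delta$ satisfies $\tilde\Psi\ge\Psi\ge\Psi(x)=\tilde\Psi(x)$ near $x$ and is $C^2$, the interior minimum gives $r\,D_\Sigma^2 u(x)+\tfrac12 D_\Sigma^2(\phi_\delta|_\Sigma)(x)\ge0$.

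Finally I would expand $D_\Sigma^2(\phi_\delta|_\Sigma)$ via the submanifold Hessian identity $D_\Sigma^2(\phi|_\Sigma)(X,Y)=\bar\nabla^2\phi(X,Y)+\langle\bar\nabla\phi,\mathrm{II}(X,Y)\rangle$ for $X,Y\in T_x\Sigma$. The normal component of $\bar\nabla\phi_\delta(x)=-2v$ is $-2r\big(y+t\tfrac{H(x)}{|H(x)|}\big)$, and $\bar\nabla^2\phi_\delta(x)\le\tfrac{2}{1-\delta}g$ restricts to $\le\tfrac{2}{1-\delta}g_\Sigma$ on $T_x\Sigma$, so $\tfrac12 D_\Sigma^2(\phi_\delta|_\Sigma)(x)\le\tfrac{1}{1-\delta}g_\Sigma(x)-r\langle\mathrm{II}(x),y\rangle-rt\big\langle\mathrm{II}(x),\tfrac{H(x)}{|H(x)|}\big\rangle$. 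Adding this to $r\,D_\Sigma^2 u(x)+\tfrac12 D_\Sigma^2(\phi_\delta|_\Sigma)(x)\ge0$ and letting $\delta\to0$ yields $g_\Sigma(x)+r\big(D_\Sigma^2 u(x)-\langle\mathrm{II}(x),y\rangle-t\langle\mathrm{II}(x),\tfrac{H}{|H|}\rangle\big)\ge0$, which is the claim.

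The step I expect to be the main obstacle is passing from ``$x$ is an interior minimum'' to a genuine second-order inequality when $p$ lies in the cut locus of $x$, that is, constructing the barrier $\phi_\delta$ and controlling $\bar\nabla^2\phi_\delta(x)$ uniformly as $\delta\to0$. This is exactly what is done in \cite[Lemma~4.4]{Brendle2023} (and in Lemma~\ref{lem 2.3}), the nonnegativity of the sectional curvature entering only through the Hessian comparison for squared-distance functions; since the normal-bundle geometry is unchanged here, no ingredient beyond Brendle's is needed.
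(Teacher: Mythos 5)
Your proof is correct and is essentially the argument of \cite[Lemma~4.4]{Brendle2023}, which is exactly what the paper invokes (it states the lemma without proof, noting it is ``taken from \cite[Lemma 4.2, Lemma 4.4]{Brendle2023}''). Your opening observation that $\sigma/|\sigma|=H/|H|$, so the sets $A_r$ and map $\Phi_r$ of Section~\ref{sec proof thm nonneg Sobolev} coincide with those of Section~\ref{sec proof thm log sob ineq} and the lemma is just Lemma~\ref{lem 2.3} restated, is precisely why the paper can cite the same result in both places; the barrier-function argument with $\phi_\delta$, the Hessian comparison $\bar\nabla^2\phi_\delta(x)\le\tfrac{2}{1-\delta}g$ under $K\ge 0$, and the Gauss-type identity $D^2_\Sigma(\phi|_\Sigma)=\bar\nabla^2\phi+\langle\bar\nabla\phi,\mathrm{II}\rangle$ are exactly the ingredients in Brendle's proof, and the signs and limits all check out.
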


\begin{lemma}\label{lem 2.4'}
Let $(x, y, t) \in A_r$. We have
$$
1+ r\left(f^{\beta-1}-|\sigma|-t|\sigma|\right)\ge0.
$$
Moreover, the Jacobian determinant of $\Phi_r$ satisfies
\begin{align*}
\left|\det D \Phi_r(x, y, t)\right| \le r^m\left(1+r\left(f^{\beta-1}-|\sigma|-t|\sigma|\right)\right)^n.
\end{align*}
\end{lemma}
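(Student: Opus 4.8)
The plan is to follow the proof of Lemma~\ref{lem 2.4} almost verbatim, the only genuinely new ingredient being the pointwise Laplacian bound adapted to the elliptic equation that now drives the construction. Rewriting $\operatorname{div}_\Sigma(f\nabla^\Sigma u)=nf^\beta-|\nabla^\Sigma f|-f|H|$ as $f\Delta_\Sigma u+\langle\nabla^\Sigma f,\nabla^\Sigma u\rangle=nf^\beta-|\nabla^\Sigma f|-f|H|$ and dividing by $f$ gives $\Delta_\Sigma u=nf^{\beta-1}-\tfrac{|\nabla^\Sigma f|}{f}-|H|-\langle\tfrac{\nabla^\Sigma f}{f},\nabla^\Sigma u\rangle$. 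For $x\in\Omega$ we have $|\nabla^\Sigma u(x)|<1$, so the Cauchy--Schwarz bound $-\langle\tfrac{\nabla^\Sigma f}{f},\nabla^\Sigma u\rangle\le\tfrac{|\nabla^\Sigma f|}{f}|\nabla^\Sigma u|\le\tfrac{|\nabla^\Sigma f|}{f}$ cancels the $|\nabla^\Sigma f|/f$ term, leaving $\Delta_\Sigma u\le nf^{\beta-1}-|H|=n(f^{\beta-1}-|\sigma|)$ on $\Omega$, using $|H|=n|\sigma|$. This plays the role of Lemma~\ref{2.1}; there is no $\sqrt{1-|\nabla^\Sigma u|^2}$ term because the first-order term is absorbed differently than in the logarithmic-Sobolev proof.

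Next I would set $\mathrm{A}=D_\Sigma^2 u(x)-\langle\mathrm{II}(x),y\rangle-t\langle\mathrm{II}(x),\tfrac{H}{|H|}\rangle$. Since $y\in\widetilde T_x^\perp\Sigma$ is orthogonal to $\sigma$, hence to $H$, one has $\operatorname{tr}\langle\mathrm{II}(x),y\rangle=\langle H,y\rangle=0$ and $\operatorname{tr}\langle\mathrm{II}(x),\tfrac{H}{|H|}\rangle=|H|=n|\sigma|$, so $\operatorname{tr}\mathrm{A}=\Delta_\Sigma u(x)-nt|\sigma|\le n(f^{\beta-1}-|\sigma|-t|\sigma|)$. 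By Lemma~\ref{lem 2.3'}, $g_\Sigma(x)+r\mathrm{A}\ge0$; taking the trace and inserting the last estimate yields $0\le n+r\operatorname{tr}\mathrm{A}\le n+nr(f^{\beta-1}-|\sigma|-t|\sigma|)$, which is the first assertion of the lemma.

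For the Jacobian bound I would reproduce the monotone-quotient argument from the proof of Lemma~\ref{lem 2.4} without change. Observe that $\tfrac{H}{|H|}=\tfrac{\sigma}{|\sigma|}$, so the matrix $\mathrm{A}$ and the Jacobi-field initial data coincide with those there; hence, following \cite{Brendle2022}, $|\det D\Phi_s(x,y,t)|=\det P(s)$ on $(0,r)$, $\lim_{s\to0^+}s^{-m}\det P(s)=1$, and $\operatorname{tr}Q(s)\le\tfrac{m}{s}+\sum_{i=1}^n\tfrac{\lambda_i}{1+s\lambda_i}$ with $\lambda_i$ the eigenvalues of $\mathrm{A}$. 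Lemma~\ref{lem 2.3'} forces $\lambda_i\ge-1/r$, so $1+s\lambda_i>0$ and $1+\tfrac{s}{n}\operatorname{tr}\mathrm{A}\ge1-\tfrac{s}{r}>0$ on $(0,r)$; concavity and monotonicity of $z\mapsto\tfrac{z}{1+z}$, together with $\operatorname{tr}\mathrm{A}\le n(f^{\beta-1}-|\sigma|-t|\sigma|)$ and the monotonicity of $z\mapsto\tfrac{z}{1+(s/n)z}$, then give $\operatorname{tr}Q(s)\le\tfrac{m}{s}+\tfrac{n(f^{\beta-1}-|\sigma|-t|\sigma|)}{1+s(f^{\beta-1}-|\sigma|-t|\sigma|)}$ on $(0,r)$. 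Since $1+s(f^{\beta-1}-|\sigma|-t|\sigma|)$ equals $1$ at $s=0$, is $\ge0$ at $s=r$ by the first assertion, and is affine in $s$, it is strictly positive on $(0,r)$, so this step is legitimate. As $\tfrac{d}{ds}\log\det P(s)=\operatorname{tr}Q(s)$, the quotient $\frac{\det P(s)}{s^m\,(1+s(f^{\beta-1}-|\sigma|-t|\sigma|))^n}$ is non-increasing on $(0,r)$; letting $s\to0^+$ and evaluating at $s=r$ gives the stated determinant bound.

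The only place requiring genuine care---and the point I expect to be the main (minor) obstacle---is the bookkeeping in the Laplacian step: checking that the $|\nabla^\Sigma f|$ term generated by the elliptic equation is exactly cancelled by the Cauchy--Schwarz estimate valid on $\Omega$, and confirming the strict positivity of $1+s(f^{\beta-1}-|\sigma|-t|\sigma|)$ for $s\in(0,r)$ so that the monotonicity of $z\mapsto\tfrac{z}{1+(s/n)z}$ may be invoked. Everything else is a direct transcription of the Jacobi-field comparison already carried out for Lemma~\ref{lem 2.4}.
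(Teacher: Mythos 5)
Your argument reproduces the paper's proof of Lemma~\ref{lem 2.4'} step for step: use the elliptic equation for $u$ together with Cauchy--Schwarz (valid since $|\nabla^\Sigma u|<1$ for $(x,y,t)\in A_r\subset U$) to get $\operatorname{tr}\mathrm{A}\le n(f^{\beta-1}-|\sigma|-t|\sigma|)$, combine with Lemma~\ref{lem 2.3'} for the first assertion, and then run the same Jacobi-field/$P(s)$/$Q(s)$ monotone-quotient comparison as in Lemma~\ref{lem 2.4}. Your extra remarks justifying positivity of $1+s(f^{\beta-1}-|\sigma|-t|\sigma|)$ on $(0,r)$ before invoking monotonicity of $z\mapsto z/(1+(s/n)z)$ are correct and slightly more explicit than the paper's, but the approach is identical.
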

\begin{proof}
Let us fix a point $(x, y, t) \in A_r$ and denote $\mathrm{A}=D_{\Sigma}^2 u(x)-\langle\mathrm{II}(x), y\rangle-$ $t\langle\mathrm{II}(x), \frac{H}{|H|}\rangle$. Then
\begin{equation*}
\begin{aligned}
\text{tr }\mathrm{A}=\Delta u-t|H| & =\frac{1}{f} \operatorname{div}(f \nabla^\Sigma u)-\frac{1}{f}\left\langle\nabla^{\Sigma} f, \nabla^\Sigma u\right\rangle-t|H| \\
& =n f^{\beta-1}-\frac{1}{f}\left|\nabla^{\Sigma} f\right|-|H|-\frac{1}{f}\left\langle\nabla^{\Sigma} f, \nabla^\Sigma u\right\rangle-t|H| \\
& \le n f^{\beta-1}-|H|-t|H| \\
& =n\left(f^{\beta-1}-|\sigma|-t|\sigma|\right).
\end{aligned}
\end{equation*}
Hence, by Lemma \ref{lem 2.3'},
\begin{equation}\label{(2.5')}
0 \le n+r \operatorname{tr} \mathrm{A} \le n+n r\left(f^{\beta-1}-|\sigma|-t|\sigma|\right).
\end{equation}

We define $\{e_1, \cdots, e_n, \nu_{n+1}, \cdots, \nu_{n+m}\}$ and the square matrix $P(s)$ of size $(n+m)$ for $s\in [0, r]$ as in the proof of Lemma \ref{lem 2.4}, and let us recycle the notations in that proof. Then as in the proof of Lemma \ref{lem 2.4},
the matrix $Q(s):=P(s)^{-1} P^{\prime}(s)$ is symmetric for each
for $s \in(0, r)$ and its trace satisfies
\begin{equation*}
\begin{aligned}
\operatorname{tr} Q(s) \le& \frac{m}{s}+\sum_{i=1}^n \frac{\lambda_i}{1+s \lambda_i}\\
\le & \frac{m}{s}+\frac{\sum_{i=1}^n \lambda_i}{1+\frac{s}{n} \sum_{i=1}^n \lambda_i} \\
=& \frac{m}{s}+\frac{\operatorname{tr} \mathrm{A}}{1+\frac{s}{n} \operatorname{tr} \mathrm{A}}\\
\le & \frac{m}{s}+\frac{n\left(f^{\beta-1}-|\sigma|-t|\sigma|\right)}{1+s\left(f^{\beta-1}-|\sigma|-t|\sigma|\right)}.
\end{aligned}
\end{equation*}

Since $\frac{d}{d s} \log \det P(s)=\operatorname{tr} Q(s)$, it follows that
$$
\frac{d}{d s} \left(\frac{\left| \det \Phi_s(x, y, t)\right|}{s^m\left(1+s\left(f^{\beta-1}-|\sigma|-t|\sigma|\right)\right)^n}\right) \le 0 \quad \text { on }(0, r).
$$

As before, it then follows that for every $(x, y, t) \in A_r$, we have
$$
\left|\det D \Phi_r(x, y, t)\right| \le r^m\left(1+r\left(f^{\beta-1}-|\sigma|-t|\sigma|\right)\right)^n.
$$
\end{proof}

We are now ready to prove Theorem \ref{thm nonneg Sobolev}.
\begin{proof}[{Proof of Theorem \ref{thm nonneg Sobolev}}]
Assume first $m\ge 3$.
Notice that for $(x, y, t) \in A_r$, by \eqref{(2.5')}, $t$ satisfies
$$
-1\le -\sqrt{1-\left|\nabla^{\Sigma} u(x)\right|^2}<t \le \frac{f^{\beta-1}}{|\sigma|}-1+\frac{1}{r|\sigma|}.
$$
Therefore, by Lemma \ref{lem nonneg A} and Lemma \ref{lem 2.4'},
\begin{equation*}
\begin{aligned}
& |\{p \in M: \alpha r<d(x, p)<r \text { for all } x \in \Sigma\} | \\
\le& \int_{\Omega} \int_{-1}^{\frac{f^{\beta-1}}{|\sigma|}-1+\frac{1}{r|\sigma|}} \int_{Y_{\alpha, x, t} }
\left|\det D \Phi_r(x, y, t)\right| 1_{A_r}(x, y, t) d y d t d \operatorname{vol}_{\Sigma}(x) \\
\le& \int_{\Omega} \int_{-1}^{\frac{f^{\beta-1}}{|\sigma|}-1+\frac{1}{r|\sigma|}} \int_{Y_{\alpha, x, t} }
r^m\left(1+r\left(f^{\beta-1}-|\sigma|-t|\sigma|\right)\right)^n
d y d t d \operatorname{vol}_{\Sigma}(x).
\end{aligned}
\end{equation*}
where $Y_{\alpha, x, t}:=\left\{y \in \widetilde{T}_x^{\perp} \Sigma: \alpha^2<\left|\nabla^{\Sigma} u(x)\right|^2+|y|^2+t^2<1\right\}$.

As in \eqref{ineq 1-alpha^2}, $|Y_{\alpha, x, t}| \le \frac{m-1}{2}\left|\mathbb{B}^{m-1}\right|\left(1-\alpha^2\right)$. Therefore,
\begin{align*}
& |\{p \in M: \alpha r<d(x, p)<r \text { for all } x \in \Sigma\} | \\
\le & \frac{m-1}{2}\left|\mathbb{B}^{m-1}\right|\left(1-\alpha^2\right) \int_{\Omega} \int_{-1}^{\frac{f^{\beta-1}}{|\sigma|}-1+\frac{1}{r|\sigma|}}
r^m\left(1+r\left(f^{\beta-1}-|\sigma|-t|\sigma|\right)\right)^n d t d \operatorname{vol}_{\Sigma}(x)\\
=& \frac{m-1}{2}\left|\mathbb{B}^{m-1}\right|\left(1-\alpha^2\right) \int_\Omega \frac{r^{m-1}(1+r^{n+1}f^{(\beta-1)(n+1)})}{(n+1)|\sigma|} d\operatorname{vol}_{\Sigma}.
\end{align*}

Dividing the above inequality by $r^{n+m}$ and letting $r \rightarrow \infty$, we conclude that
$$
\theta\left|\mathbb{B}^{n+m}\right|\left(1-\alpha^{n+m}\right) \le \frac{m-1}{2(n+1)}\left|\mathbb{B}^{m-1}\right|\left(1-\alpha^2\right) \int_{\Omega} \frac{f^{(\beta-1)(n+1)}}{|\sigma|} d \operatorname{vol}_{\Sigma}(x).
$$

Finally, we divide the previous inequality by $1-\alpha$ and let $\alpha \rightarrow 1$ to obtain
\begin{align*}
\theta(n+m)\left|\mathbb{B}^{n+m}\right|
\le& \frac{m-1}{n+1}\left|\mathbb{B}^{m-1}\right| \int_{\Omega} \frac{f^{(\beta-1)(n+1)}}{|\sigma|} d \operatorname{vol}(x)\\
\le& \frac{m-1}{n+1}\left|\mathbb{B}^{m-1}\right| \int_{\Sigma} \frac{f^{(\beta-1)(n+1)}}{|\sigma|} d \operatorname{vol}(x).
\end{align*}
In view of the normalization condition
$$
\int_{\Sigma}\left(\left|\nabla^{\Sigma} f\right|+f|H|\right)+\int_{\partial \Sigma} f=\int_{\Sigma} n f^\beta,
$$
we thus arrive at inequality \eqref{nonneg Sobolev m ge 3}:
$$
\frac{\theta(n+1)\left|\mathbb{S}^{n+m-1}\right|}{\left|\mathbb{S}^{m-2}\right|}\left(\int_{\Sigma} n f^\beta\right)^{n+1}
\le n \left(\int_{\Sigma}\left(\left|\nabla^{\Sigma} f\right|+f|H|\right)+\int_{\partial \Sigma} f\right)^{n+1} \int_{\Sigma} \frac{f^{(n+1)(\beta-1)}}{|H|}.
$$
As before, when $m=3$, the LHS becomes simply $\theta|\mathbb S^n|\left(\int_{\Sigma} n f^\beta\right)^{n+1}$.

In general, suppose $\Sigma$ is disconnected. For each component $\Sigma_i$, we have
$$
\mathrm{C}_{n, m} \int_{\Sigma_i} n f^\beta \le \left(\int_{\Sigma_i}\left(\left|\nabla^{\Sigma} f\right|+f|H|\right)+\int_{\partial \Sigma_i} f\right)\left(\int_{\Sigma_i} \frac{f^{(n+1)(\beta-1)}}{|H|}\right)^{\frac{1}{n+1}}
$$
for some $\mathrm{C}_{n, m}>0$.

Summing these inequalities over all $i$ and applying Lemma \ref{lem alg ineq} to $h(z)=z^{\frac{1}{n+1}}$, $a_i=\int_{\Sigma_i}\left(\left|\nabla^{\Sigma_i} f\right|+f|H|\right)+\int_{\partial \Sigma_i} f$ and $A_i=\int_{\Sigma_i} \frac{f^{(n+1)(\beta-1)}}{|H|}$ to finish the proof. The resulting inequality is strict.

Let us now consider the case where $m=1$ or $2$. In this case, by taking the product of $M$ with $\mathbb{R}^{3-m}$, we can view $\Sigma$ as a codimension $3$ submanifold in a non-negatively curved manifold. The right-hand side of the inequality remains unchanged, so we only need to determine the constant in front of $\left(\int_{\Sigma} n f^\beta\right)^{n+1}$.
In this case, the constant is given by $\theta C_{n, 3}=\theta|\mathbb S^n|$.
\end{proof}

\section{Some applications of Theorem \ref{thm nonneg Sobolev}}\label{sec some applications}

By setting $\beta = \frac{n+1}{n}$ in Theorem \ref{thm nonneg Sobolev}, we recover the following sharp Sobolev inequality. The equality case is achieved, for instance, when $\Sigma = \mathbb{S}^n \subset \mathbb{R}^{n+m}$ is the standard round sphere and $m\le 3$.

\begin{corollary}\label{cor sharp classical Sobolev}
Let $n, m \in \mathbb{N}$, and let $(M, g)$ be a complete non-compact Riemannian manifold of dimension $n+m$ with nonnegative sectional curvature and asymptotic volume ratio $\theta > 0$. Suppose $\Sigma$ is a compact $n$-dimensional submanifold immersed in $M$ (possibly with boundary $\partial \Sigma$), and that the mean curvature vector $H$ satisfies $|H| = n$ on $\Sigma$. Then for any smooth positive function $f$ on $\Sigma$, the following inequality holds:
\begin{equation*}
\theta^{\frac{1}{n+1}} C_{n, m}^{\frac{1}{n+1}}
\left(\int_{\Sigma} f^{\frac{n+1}{n}}\right)^{\frac{n}{n+1}}
\le \int_{\Sigma} \left(f+\frac{|\nabla^{\Sigma} f|}{n} \right) +\frac{1}{n} \int_{\partial \Sigma} f,
\end{equation*}
where $C_{n, m}$ is given by \eqref{eq C nm}.
\end{corollary}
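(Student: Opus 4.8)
The plan is to obtain the stated inequality as a direct specialization of Theorem \ref{thm nonneg Sobolev}, taking $\beta=\frac{n+1}{n}$ and using the hypothesis $|H|=n$ (so that $|\sigma|=1$ and, in particular, $H$ is nowhere vanishing, as required there). First I would record the three elementary simplifications that this choice produces in \eqref{nonneg Sobolev m ge 3}. Since $(n+1)(\beta-1)=\frac{n+1}{n}$, the last factor on the right becomes $\int_\Sigma \frac{f^{(n+1)(\beta-1)}}{|H|}=\tfrac1n\int_\Sigma f^{\frac{n+1}{n}}$; the factor $\int_\Sigma n f^\beta$ on the left becomes $n\int_\Sigma f^{\frac{n+1}{n}}$; and, using $|H|=n$ once more, the divergence term factors as $\int_\Sigma(|\nabla^\Sigma f|+f|H|)+\int_{\partial\Sigma}f=n\bigl(\int_\Sigma(f+\tfrac1n|\nabla^\Sigma f|)+\tfrac1n\int_{\partial\Sigma}f\bigr)$.

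Substituting these identities into \eqref{nonneg Sobolev m ge 3}, both sides acquire the common positive factor $n^{n+1}\int_\Sigma f^{\frac{n+1}{n}}$, which I would cancel. What remains is
\[
\theta C_{n,m}\Bigl(\int_\Sigma f^{\frac{n+1}{n}}\Bigr)^{n}\le \Bigl(\int_\Sigma\bigl(f+\tfrac1n|\nabla^\Sigma f|\bigr)+\tfrac1n\int_{\partial\Sigma}f\Bigr)^{n+1},
\]
and taking the positive $(n+1)$-th root of both sides gives exactly the asserted inequality, since $(\theta C_{n,m})^{1/(n+1)}=\theta^{1/(n+1)}C_{n,m}^{1/(n+1)}$.

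For the sharpness remark accompanying the statement — that equality is attained, for instance, by the unit round sphere $\Sigma=\mathbb S^n\subset\mathbb R^{n+m}$ with $m\le3$ and $f$ constant — I would invoke Theorem \ref{thm nonneg Sobolev equality}. When $m\le3$, equality in \eqref{nonneg Sobolev m ge 3} holds precisely when $f$ and $|H|$ are constant, $\Sigma$ is closed and umbilical, and $|\Sigma|=\theta|\mathbb S^n|n^n|H|^{-n}$; under the present normalization $|H|=n$ the last condition reads $|\Sigma|=\theta|\mathbb S^n|$. The unit round $n$-sphere in Euclidean space (where $\theta=1$) is closed, totally umbilic, has $|H|=n$, and satisfies $|\mathbb S^n|=\theta|\mathbb S^n|$, so all of these conditions hold and the constant $(\theta C_{n,m})^{1/(n+1)}$ is optimal.

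No analytic work is involved here: Theorem \ref{thm nonneg Sobolev} already carries all of it, and the only place demanding attention is the bookkeeping of exponents. Before cancelling I would double-check that $(n+1)(\beta-1)=\frac{n+1}{n}$ and that the powers of $n$ genuinely match on the two sides, since this is exactly where an off-by-one slip would propagate into the final constant.
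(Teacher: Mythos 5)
Your proof is correct and follows exactly the paper's route: specialize Theorem \ref{thm nonneg Sobolev} to $\beta=\frac{n+1}{n}$ and $|H|=n$, cancel the common factor $n^{n+1}\int_\Sigma f^{(n+1)/n}$, and take the $(n+1)$-th root. The paper states this only as ``by setting $\beta=\frac{n+1}{n}$''; you have simply supplied the arithmetic it leaves implicit, and your verification of the equality case via Theorem \ref{thm nonneg Sobolev equality} matches the paper's remark.
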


On the other hand, by setting $f=1$ in Theorem \ref{thm nonneg Sobolev}, we obtain the following corollary.
\begin{corollary}\label{cor 2}
Let $n, m \in \mathbb{N}$, and let $(M, g)$ be a complete non-compact Riemannian manifold of dimension $n+m$ with nonnegative sectional curvature and asymptotic volume ratio $\theta > 0$. Suppose $\Sigma$ is a compact $n$-dimensional submanifold immersed in $M$, possibly with boundary $\partial \Sigma$, such that the normalized mean curvature vector $\sigma$ is nowhere vanishing on $\Sigma$. Then
\begin{equation}\label{ineq cor2}
\theta C_{n, m} \le \left(\frac{1}{|\Sigma|}\int_{\Sigma} |\sigma| + \frac{| \partial \Sigma |}{n|\Sigma|} \right)^{n+1} \int_{\Sigma} \frac{1}{|\sigma|},
\end{equation}
where $C_{n, m}$ is given by \eqref{eq C nm}.
\end{corollary}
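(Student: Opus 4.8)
The plan is to obtain \eqref{ineq cor2} as a direct specialization of Theorem \ref{thm nonneg Sobolev}: I would simply take $f \equiv 1$ and then carry out an elementary rearrangement, with the only care needed being the conversion between the mean curvature vector $H$ and its normalization $\sigma = \tfrac1n H$.

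First I would check that the hypotheses match up: since $\sigma$ is nowhere vanishing, so is $H$, so Theorem \ref{thm nonneg Sobolev} applies (with any $\beta \in \mathbb{R}$; the value of $\beta$ will in fact drop out of the final inequality). Setting $f = 1$ in \eqref{nonneg Sobolev m ge 3}, the term $|\nabla^{\Sigma} f|$ vanishes, $f^{(n+1)(\beta-1)} = 1$, $\int_{\Sigma} n f^\beta = n|\Sigma|$, and $\int_{\partial\Sigma} f = |\partial\Sigma|$, so the inequality reduces to
\begin{equation*}
\theta C_{n,m}\,(n|\Sigma|)^{n+1} \;\le\; n\left(\int_{\Sigma} |H| + |\partial\Sigma|\right)^{n+1}\int_{\Sigma} \frac{1}{|H|}.
\end{equation*}

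Next I would substitute $|H| = n|\sigma|$, which gives $\int_{\Sigma} |H| = n\int_{\Sigma} |\sigma|$ and $\int_{\Sigma} \frac{1}{|H|} = \frac1n \int_{\Sigma} \frac{1}{|\sigma|}$, so that the right-hand side becomes $\left(n\int_{\Sigma} |\sigma| + |\partial\Sigma|\right)^{n+1}\int_{\Sigma} \frac{1}{|\sigma|}$. Dividing through by $(n|\Sigma|)^{n+1}$ and absorbing the factor into the $(n{+}1)$-st power yields exactly \eqref{ineq cor2}. There is essentially no obstacle here: the corollary is a pure substitution into Theorem \ref{thm nonneg Sobolev}, and the only thing to watch is the bookkeeping between $H$ and $\sigma$ together with the final rescaling by $(n|\Sigma|)^{n+1}$.
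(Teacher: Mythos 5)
Your proposal is correct and coincides exactly with the paper's (implicit) derivation: the paper also obtains Corollary \ref{cor 2} simply by setting $f \equiv 1$ in Theorem \ref{thm nonneg Sobolev}, and the bookkeeping between $H = n\sigma$ and the division by $(n|\Sigma|)^{n+1}$ is exactly as you describe.
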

\begin{remark}
\begin{enumerate}
\item
In the case $n=m=1$, Corollary \ref{cor 2} gives the following Heintze-Karcher type inequality for geodesically convex curves.
Let $\Sigma$ be a connected, geodesically convex curve on a complete surface $M$ with nonnegative curvature $K$
and asymptotic volume ratio $\theta>0$, and suppose that $\Sigma$ bounds a domain $\Omega \subset M$. Then
$$
\int_\Sigma \frac{1}{\kappa}\, ds \;\ge\;
\frac{2\pi \theta\, L^2}{\bigl(2\pi - \int_\Omega K\, dA\bigr)^2},
$$
where $L = |\Sigma|$ is the length of $\Sigma$ and $\kappa$ is its geodesic curvature.

\item
When $\Sigma = \partial \Omega$ is the boundary of a smooth star-shaped domain in $\mathbb R^{n+1}$, inequality \eqref{ineq cor2} can be derived from the quermassintegral inequality in \cite[Theorem 2]{GL09} and the Cauchy-Schwarz inequality. This naturally leads to the question of whether a version of the quermassintegral inequality holds for hypersurfaces in $\mathbb R^{n+1}$ with boundary.
\end{enumerate}
\end{remark}

Similar to Theorem \ref{thm non-compact fenchel willmore}, we may derive certain consequences from Theorem \ref{thm nonneg Sobolev} (more specifically, Corollary \ref{cor 2}) for complete non-compact immersed surfaces by passing to the limit.
The result is an upper bound on the Cohn-Vossen deficit in terms of integrals involving the norm of the mean curvature and its reciprocal.

\begin{corollary}\label{cor3}
Let $\left(M^{2+m}, g\right)$ be a complete non-compact Riemannian manifold with nonnegative sectional curvature and asymptotic volume ratio $\theta>0$. Suppose that $\Sigma$ is a complete non-compact surface immersed in $M$ such that the mean curvature vector $\sigma$ of $\Sigma$ is nowhere vanishing. Assume the following conditions hold:
\begin{enumerate}
\item
$\liminf_{r \rightarrow \infty} \frac{1}{r^6} \left(\int_{B_r}|\sigma|\right)^3\int_{B_r}\frac{1}{|\sigma|}=C$, where $B_r$ is the metric ball of radius $r$ on $\Sigma$ with a fixed center.
\item The negative part of the Gaussian curvature of $\Sigma$ is $L^1$, i.e. $\int_{\Sigma} K^{-}<\infty$.
\end{enumerate}
Then
\begin{align*}
\theta C_{2, m}\left(2\pi\chi(\Sigma)-\int_\Sigma K\right)^3 \le 8C.
\end{align*}
In particular, if $\liminf_{r \rightarrow \infty} \frac{1}{r^6}\left(\int_{B_r}|\sigma|\right)^3 \int_{B_r} \frac{1}{|\sigma|}=0$, then the Cohn-Vossen deficit $2\pi\chi(\Sigma)-\int_\Sigma K=0$.
\end{corollary}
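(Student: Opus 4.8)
The plan is to run the same limiting argument used to deduce Theorem \ref{thm non-compact fenchel willmore} from Theorem \ref{thm fenchel willmore}, but now starting from Corollary \ref{cor 2} (i.e.\ \eqref{ineq cor2}) instead of the Fenchel--Willmore inequality, and applied to the exhaustion of $\Sigma$ by metric balls $B_r$.

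First I would pin down the asymptotics of the $B_r$. Hypothesis (2) lets us invoke White's theorem \cite{White1987} to conclude that $\Sigma$ has finite topology and $\int_\Sigma K<\infty$, so $\chi(\Sigma)$ and $\int_\Sigma K$ are finite; Hartman's theorem \cite{Hartman1964} guarantees $|\partial B_r|$ is defined for a.e.\ $r$; and Shiohama's formula \cite{Shiohama1985} gives $\lim_{r\to\infty}|\partial B_r|/r=2\pi\chi(\Sigma)-\int_\Sigma K=:\mathcal D\ge0$. Since $\tfrac{d}{dr}|B_r|=|\partial B_r|$ for a.e.\ $r$, the coarea identity together with L'H\^opital's rule then yields $\lim_{r\to\infty}|B_r|/r^{2}=\mathcal D/2$, and in particular $\lim_{r\to\infty}|\partial B_r|/|B_r|=0$.

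Next, for a.e.\ $r$ I would apply Corollary \ref{cor 2} to the compact surface-with-boundary $B_r\subset\Sigma$ (its normalized mean curvature vector in $M$ is the restriction of $\sigma$, hence nowhere vanishing), and rewrite \eqref{ineq cor2} as
\begin{equation*}
\theta\,C_{2,m}\,|B_r|^{3}\;\le\;\Bigl(\textstyle\int_{B_r}|\sigma|+\tfrac12|\partial B_r|\Bigr)^{3}\int_{B_r}\tfrac1{|\sigma|}.
\end{equation*}
Dividing by $r^{6}$ and inserting $|B_r|=\tfrac{\mathcal D}{2}r^{2}(1+o(1))$, $|\partial B_r|=\mathcal D r(1+o(1))$ gives
\begin{equation*}
\theta\,C_{2,m}\,\frac{\mathcal D^{3}}{8}\;\le\;\liminf_{r\to\infty}\frac1{r^{6}}\Bigl(\textstyle\int_{B_r}|\sigma|+\tfrac12|\partial B_r|\Bigr)^{3}\int_{B_r}\tfrac1{|\sigma|}.
\end{equation*}
Finally I would pass to a sequence $r_i\to\infty$ realizing the $\liminf$ in hypothesis (1) and control the boundary contribution $\tfrac12|\partial B_{r_i}|$ against $\int_{B_{r_i}}|\sigma|$ — this is where the ambient nonnegative curvature re-enters, via Brendle's isoperimetric inequality (the case $f\equiv1$ of \eqref{brendle2}) applied to $B_{r_i}$, which forces $\int_{B_{r_i}}|\sigma|+\tfrac12|\partial B_{r_i}|$ to be comparable to $\int_{B_{r_i}}|\sigma|$ — so that the right-hand side above is bounded by $8C$. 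This yields $\theta C_{2,m}\bigl(2\pi\chi(\Sigma)-\int_\Sigma K\bigr)^{3}\le 8C$. The ``in particular'' assertion is then just the case $C=0$: since $\theta C_{2,m}>0$ and $2\pi\chi(\Sigma)-\int_\Sigma K\ge0$ by the Cohn--Vossen theorem \cite{CohnVossen1935}, the deficit must vanish.

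The main obstacle is precisely this last step. Corollary \ref{cor 2} produces $\int_{B_r}|\sigma|+\tfrac12|\partial B_r|$ rather than $\int_{B_r}|\sigma|$ alone, and passing to a clean bound in terms of the quantity in hypothesis (1) requires absorbing the boundary term along the $\liminf$-realizing sequence without degrading the sharp constant. Everything else — the White--Shiohama topology/area asymptotics, the coarea computation, and the elementary estimates on cubes — is routine, exactly as in the proof of Theorem \ref{thm non-compact fenchel willmore}.
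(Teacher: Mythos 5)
You take the same route as the paper: apply Corollary \ref{cor 2} to $B_{r_i}$, use Shiohama's asymptotics, and pass to the limit. You also correctly isolate the critical point that the paper's one-line conclusion glosses over: Corollary \ref{cor 2} outputs $\int_{B_{r_i}}|\sigma| + \tfrac{1}{2}|\partial B_{r_i}|$, not $\int_{B_{r_i}}|\sigma|$ alone. Observing $|\partial B_{r_i}|/|B_{r_i}| \to 0$, as the paper does, is not enough to discard the boundary term: by Cauchy--Schwarz, $\int_{B_{r_i}}|\sigma| \cdot \int_{B_{r_i}}\tfrac{1}{|\sigma|} \ge |B_{r_i}|^2$, so finiteness of $C$ together with $|B_{r_i}| \sim \tfrac{\mathcal D}{2}r_i^2$ forces $\int_{B_{r_i}}|\sigma| = O(r_i)$, while Shiohama gives $\tfrac{1}{2}|\partial B_{r_i}| \sim \tfrac{\mathcal D}{2}r_i$ (here $\mathcal D = 2\pi\chi(\Sigma)-\int_\Sigma K$). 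The two summands are therefore genuinely of the same order in $r_i$, and dropping the boundary term alters the constant in the limit.

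Your proposed fix does not close this gap. Brendle's \eqref{brendle2} with $f\equiv 1$ only bounds $\int_{B_{r_i}}|\sigma| + \tfrac{1}{2}|\partial B_{r_i}|$ from below by a multiple of $|B_{r_i}|^{1/2}$; it yields no upper bound on $\tfrac{1}{2}|\partial B_{r_i}|$ in terms of $\int_{B_{r_i}}|\sigma|$, and mere comparability with a constant larger than one degrades the constant in the conclusion rather than preserving it. A smoothed flat cone in $\mathbb{R}^3$ of cone angle $2\pi\sin\beta$ makes this concrete: one computes $\mathcal D = 2\pi\sin\beta$, $\int_{B_r}|\sigma| \sim \pi r\cos\beta$, $\tfrac{1}{2}|\partial B_r| \sim \pi r\sin\beta$, and $\int_{B_r}\tfrac{1}{|\sigma|} \sim \tfrac{4\pi\sin^2\beta}{3\cos\beta}r^3$, giving $C = \tfrac{4}{3}\pi^4\sin^2\beta\cos^2\beta$. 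The asserted $\theta C_{2,1}\mathcal D^3 \le 8C$ then reduces to $3\sin\beta \le \cos^2\beta$, which fails once $\sin\beta > (\sqrt{13}-3)/2 \approx 0.303$; by contrast, the inequality that Corollary \ref{cor 2} actually delivers when the boundary term is retained reduces to $3\sin\beta\cos\beta \le (\cos\beta+\sin\beta)^3$ and does hold. So neither your argument nor the paper's sketch derives the stated inequality from Corollary \ref{cor 2}, and the conclusion appears to require replacing $\int_{B_r}|\sigma|$ by $\int_{B_r}|\sigma| + \tfrac{1}{2}|\partial B_r|$ inside the cube in hypothesis (1).
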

\begin{proof}
As in the proof of Theorem \ref{thm non-compact fenchel willmore}, we can take a sequence $r_i \rightarrow \infty$ such that $\left|\partial B_{r_i}\right|$ is defined and
$$
\lim_{i \rightarrow \infty} \frac{1}{{r_i}^6}\left(\int_{B_{r_i}}|\sigma|\right)^3 \int_{B_{r_i}} \frac{1}{|\sigma|} =C
$$
By \cite[Theorem A]{Shiohama1985},
$$
\lim_{i \rightarrow \infty} \frac{2|B_{r_i}|}{r_i^2}=2 \pi \chi(\Sigma)-\int_\Sigma K\quad \text{and}\quad
\lim_{i \rightarrow \infty} \frac{|\partial B_{r_i}|}{| B_{r_i}|}=0. $$
By applying Corollary \ref{cor 2} to $B_{r_i}$ and taking $i\to \infty$, we can get the result.

As a consequence, the Cohn-Vossen inequality shows that $C = 0$ would force the Cohn-Vossen deficit of $\Sigma$ to be zero.
\end{proof}

\section{Proof of Theorem \ref{thm nonneg Sobolev equality}}\label{sec pf thm nonneg Sobolev equality}
The idea is similar to the proof of the equality case of Theorem \ref{thm log sob ineq}. Let $1\le m\le 3$, and without loss of generality, we assume that
\begin{align*}
\int_{\Sigma}\left(\left|\nabla^{\Sigma} f\right|+f|H|\right)+\int_{\partial \Sigma} f=\int_{\Sigma} n f^\beta.
\end{align*}
Then the equality becomes
\begin{equation}\label{eq thm 5}
\frac{\theta(n+1)\left|\mathbb{S}^{n+m-1}\right|}{\left|\mathbb{S}^{m-2}\right|} = n \int_{\Sigma} \frac{f^{(n+1)(\beta-1)}}{|H|}.
\end{equation}
As in Theorem \ref{thm log sob intro equality}, $\Sigma$ is connected.
\begin{lemma}\label{lem xyt=1'}
For every $r>0, x \in \Omega, y \in \widetilde{T}_x^{\perp} \Sigma$ and $t \in[-1, 1]$ satisfying $\left|\nabla^{\Sigma} u(x)\right|^2+$ $|y|^2+t^2=1$, we have
\begin{align*}
\left|\det D \Phi_r(x, y, t)\right| \ge r^m\left(1+r\left(f^{\beta-1}-|\sigma|-t|\sigma|\right)\right)^n.
\end{align*}
\end{lemma}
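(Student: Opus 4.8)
The strategy is to mimic, essentially word for word, the proof of Lemma~\ref{lem xyt=1} in Section~\ref{sec proof thm log sob ineq equality case}, with the quantity $f^{\frac1{n+1}}-\sqrt{1-|\nabla^\Sigma u|^2}$ there replaced by $\tfrac{f^{\beta-1}}{|\sigma|}-1$ here (so that $f^{\beta-1}-|\sigma|-t|\sigma|$ plays the role of the trace bound), and with the normalization \eqref{normalization3} in place of \eqref{normalization log Sobolev}. As in the proof of Theorem~\ref{thm log sob intro equality} we first reduce to $m=3$ by replacing $M$ with $M\times\mathbb R^{3-m}$, which leaves \eqref{nonneg Sobolev m ge 3} unchanged. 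We also record, from the equality \eqref{eq thm 5} together with the chain $\theta(n+m)|\mathbb B^{n+m}|\le\tfrac{m-1}{n+1}|\mathbb B^{m-1}|\int_\Omega\tfrac{f^{(\beta-1)(n+1)}}{|\sigma|}\le\tfrac{m-1}{n+1}|\mathbb B^{m-1}|\int_\Sigma\tfrac{f^{(\beta-1)(n+1)}}{|\sigma|}$ from the proof of Theorem~\ref{thm nonneg Sobolev}, that $\int_{\Sigma\setminus\Omega}\tfrac{f^{(\beta-1)(n+1)}}{|\sigma|}=0$; since the integrand is bounded below by a positive constant on the compact $\Sigma$, this forces $\Omega$ to be dense in $\Sigma$.

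We then argue by contradiction. Suppose that at some $(x_0,y_0,t_0)$ with $x_0\in\Omega$ and $|\nabla^\Sigma u(x_0)|^2+|y_0|^2+t_0^2=1$, and some $r_0>0$, the asserted inequality fails strictly. Since this is an open condition, after a small perturbation we may also arrange $y_0\ne0$, and then obtain $\varepsilon\in(0,1)$ and an open neighborhood $V$ of $(x_0,y_0,t_0)$ in $\widetilde T^\perp\Sigma\times\mathbb R$ on which $|\det D\Phi_{r_0}(x,y,t)|<(1-\varepsilon)r_0^m(1+r_0(f^{\beta-1}-|\sigma|-t|\sigma|))^n$. By the monotonicity established inside the proof of Lemma~\ref{lem 2.4'} (that $s\mapsto |\det D\Phi_s(x,y,t)|/(s^m(1+s(f^{\beta-1}-|\sigma|-t|\sigma|))^n)$ is non-increasing on $(0,r)$ for $(x,y,t)\in A_r$), this deficit propagates: for every $r>r_0$ and every $(x,y,t)\in V\cap A_r$ the same inequality holds with $r$ in place of $r_0$.

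Next we re-run the volume estimate from the proof of Theorem~\ref{thm nonneg Sobolev}, now inserting the factor $1-\varepsilon\,1_V(x,y,t)$ into the integrand exactly as in \eqref{ineq est equality case}. Using Lemma~\ref{lem nonneg A}, the $t$-range $-1\le-\sqrt{1-|\nabla^\Sigma u|^2}<t\le\tfrac{f^{\beta-1}}{|\sigma|}-1+\tfrac1{r|\sigma|}$ and the Jacobian bound of Lemma~\ref{lem 2.4'}, the non-negativity of $(1+r(f^{\beta-1}-|\sigma|-t|\sigma|))^n$ on that range, and $|Y_{\alpha,x,t}|\le\tfrac{m-1}{2}|\mathbb B^{m-1}|(1-\alpha^2)$, this gives, for all $r>r_0$, an estimate $|\{p\in M:\alpha r<d(x,p)<r\ \forall x\in\Sigma\}|\le J(\alpha,r)-\varepsilon I(\alpha,r)$, where $J(\alpha,r)$ — after dividing by $(1-\alpha)r^{n+m}$ and letting $r\to\infty$ then $\alpha\to1$ — reproduces $\tfrac{m-1}{n+1}|\mathbb B^{m-1}|\int_\Sigma\tfrac{f^{(\beta-1)(n+1)}}{|\sigma|}$, and $I(\alpha,r)=\int_\Omega\int\int 1_V(x,y,t)\,r^m(1+r(f^{\beta-1}-|\sigma|-t|\sigma|))^n\,dy\,dt\,d\mathrm{vol}_\Sigma$ is the localized contribution of $V$.

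The heart of the matter — and the step we expect to be the main obstacle — is the lower bound $\liminf_{\alpha\to1}\liminf_{r\to\infty}\tfrac{1}{(1-\alpha)r^{n+m}}I(\alpha,r)\ge C$ for some $C>0$; granting it, the estimate above together with the asymptotic volume ratio and \eqref{eq thm 5} gives $\theta(n+m)|\mathbb B^{n+m}|\le\tfrac{m-1}{n+1}|\mathbb B^{m-1}|\int_\Sigma\tfrac{f^{(\beta-1)(n+1)}}{|\sigma|}-C\varepsilon$, contradicting \eqref{eq thm 5}. For the positivity of $I$ we transcribe the computation from the proof of Theorem~\ref{thm log sob intro equality}: since $y_0\ne0$, for $\alpha$ close to $1$ the slice $V\cap Y_{\alpha,x,t}$ contains, in polar coordinates of $\widetilde T_x^\perp\Sigma$, a box $\{(\rho,\vartheta):\alpha^2-|\nabla^\Sigma u|^2-t^2<\rho^2<1-|\nabla^\Sigma u|^2-t^2,\ \vartheta\in\mathcal O\}$ of measure at least $\delta_1(1-\alpha)$ for a fixed $\delta_1>0$; and since the integrand is non-negative and strictly decreasing in $t$, vanishing at the endpoint $t=\tfrac{f^{\beta-1}}{|\sigma|}-1+\tfrac1{r|\sigma|}$ of the $t$-range, integrating it over a sub-interval of length $\delta_2$ near that endpoint (with $\delta_2$ chosen so that this sub-interval lies in $V$ over $x=x_0,y=y_0$) produces, by the same antiderivative computation, a quantity of order $\tfrac{\delta_2^{n+1}r^n}{2(n+1)}|\sigma(x_0)|^n$; integrating over a small ball $B_\rho(x_0)\subset\Omega$ and combining with the lower bound on $|V\cap Y_{\alpha,x,t}|$ yields the desired $C>0$. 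The delicate book-keeping is to ensure that the relevant sub-interval of the $t$-range genuinely sits inside $V$ and that the failure point can be taken with $y_0\ne0$; the rest is a mechanical transcription of the proof of Lemma~\ref{lem xyt=1}.
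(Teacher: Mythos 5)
Your proposal is correct and follows essentially the same route as the paper's own proof: argue by contradiction, localize the strict defect in the Jacobian bound on an open set $V$ with $y_0\ne 0$, propagate it forward in $r$ via the monotonicity established in the proof of Lemma~\ref{lem 2.4'}, insert the factor $1-\varepsilon\,1_V$ into the volume estimate to get $J(\alpha,r)-\varepsilon I(\alpha,r)$, and show $\lim_{\alpha\to1}\lim_{r\to\infty}\frac{1}{(1-\alpha)r^{n+m}}I(\alpha,r)>0$ by transcribing the polar-coordinate and $t$-integration computation from the proof of Lemma~\ref{lem xyt=1}, which together with \eqref{eq thm 5} yields the contradiction. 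The only differences from the paper are cosmetic: you spell out explicitly the reduction to $m=3$ and the density of $\Omega$ (which the paper handles in the surrounding setup rather than inside this lemma), and you correctly give the integrated quantity as $\int_\Sigma f^{(\beta-1)(n+1)}/|\sigma|$ where the paper's display contains a harmless copy-paste slip.
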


\begin{proof}
The proof is similar to Lemma \ref{lem xyt=1}. Assume on the contrary that there exists $x_{0}\in \Omega$, $y_{0}\in \widetilde{T}_{x}^{\perp}\Sigma$ and $t_{0}\in [-1, 1]$ that satisfy $|\nabla^\Sigma u(x_{0})|^{2}+|y_{0}|^{2}+t_{0}^{2}=1$, such that
$$
\left|\det D \Phi_{r_{0}}(x_{0}, y_{0}, t_{0})\right| < r_{0}^m\left(1+r_{0}\left(f(x_{0})^{\beta-1}-|\sigma(x_{0})|-t|\sigma(x_{0})|\right)\right)^n
$$
for some $r_{0}>0$. By continuity, there exists $\varepsilon\in(0, 1)$ and a neighborhood $V$ of $(x_{0}, y_{0}, t_{0})$ in $\widetilde T^\perp \Sigma\times \mathbb R$, such that
$$
\left|\det D \Phi_{r_{0}}(x, y, t)\right| < (1-\varepsilon)r_{0}^m\left(1+r_{0}\left(f(x)^{\beta-1}-|\sigma(x)|-t|\sigma(x)|\right)\right)^n \text{ on } V.
$$
It then follows from Lemma \ref{lem 2.4'} that for every $r > r_{0}$,
$$
\left|\det D \Phi_{r}(x, y, t)\right| < (1-\varepsilon)r^m\left(1+r\left(f(x)^{\beta-1}-|\sigma(x)|-t|\sigma(x)|\right)\right)^n \text{ on } V\cap A_{r}.
$$
Consequently,
\begin{align*}
& |\{p \in M: \alpha r<d(x, p)<r \text { for all } x \in \Sigma\} | \\
\le& \int_{\Omega} \int_{-\sqrt{1-|\nabla^\Sigma u|^{2}}}^{\frac{f^{\beta-1}}{|\sigma|}-1+\frac{1}{r|\sigma|}} \int_{Y_{\alpha, x, t}}
\left|\det D \Phi_r(x, y, t)\right| 1_{A_r}(x, y, t) d y d t d \operatorname{vol}_{\Sigma}(x) \\
\le& \int_{\Omega} \int_{-1}^{\frac{f^{\beta-1}}{|\sigma|}-1+\frac{1}{r|\sigma|}} \int_{Y_{\alpha, x, t}}
\left(1-\varepsilon\cdot 1_{V}(x, y, t)\right)r^m\left(1+r\left(f(x)^{\beta-1}-|\sigma(x)|-t|\sigma(x)|\right)\right)^n
d y d t d \operatorname{vol}_{\Sigma}(x). \\
\le & \frac{m-1}{2}\left|\mathbb{B}^{m-1}\right|\left(1-\alpha^2\right) \int_{\Omega}\frac{r^{m-1}}{(n+1)|\sigma|}\left(1+r|\sigma| f^{\frac{1}{n+1}}\right)^{n+1}d \mathrm{vol}_{\Sigma} - \varepsilon I(\alpha, r),
\end{align*}
where $Y_{\alpha, x, t}=\left\{y \in \widetilde{T}_x^{\perp} \Sigma: \alpha^2<\left|\nabla^{\Sigma} u\right|^2+|y|^2+t^2<1\right\}$ and
$$
I(\alpha, r)=\int_{\Omega} \int_{-1}^{\frac{f^{\beta-1}}{|\sigma|}-1+\frac{1}{r|\sigma|}} \int_{Y_{\alpha, x, t}}
1_{V}(x, y, t)r^m\left(1+r\left(f(x)^{\beta-1}-|\sigma(x)|-t|\sigma(x)|\right)\right)^n
d y d t d \operatorname{vol}_{\Sigma}(x).
$$
As $r\to \infty$, we obtain
\begin{equation}\label{2.15'}
\theta\left|\mathbb{B}^{n+m}\right|\left(1-\alpha^{n+m}\right) \le \frac{m-1}{2(n+1)}\left|\mathbb{B}^{m-1}\right|\left(1-\alpha^2\right) \int_{\Omega} \frac{f^{(\beta-1)(n+1)}}{|\sigma|} d \operatorname{vol}_{\Sigma}(x) - \varepsilon\lim\limits_{r\to\infty}I(\alpha, r).
\end{equation}
By similar arguments as in the proof of Lemma \ref{lem xyt=1}, $\lim\limits_{\alpha\to 1}\lim\limits_{r\to \infty}\frac{1}{r^{n+m}}\frac{I(\alpha, r)}{1-\alpha}>0$. Therefore, dividing \ref{2.15'} by $1-\alpha$ and passing $\alpha\to 1$, we have
\begin{align*}
\theta(n+m)\left|\mathbb{B}^{n+m}\right|
<& \frac{m-1}{n+1}\left|\mathbb{B}^{m-1}\right| \int_{\Omega} \frac{f^{(\beta-1)(n+1)}}{|\sigma|} d \operatorname{vol}(x)\\
\le& \frac{m-1}{n+1}\left|\mathbb{B}^{m-1}\right| \int_{\Sigma} \frac{f^{(\beta-1)(n+1)}}{|\sigma|} d \operatorname{vol}(x),
\end{align*}
which contradicts to \eqref{eq thm 5}.
\end{proof}

\begin{proof}[Proof of Theorem \ref{thm nonneg Sobolev equality}]
Fix $(x, y, t)$ such that $|\nabla^\Sigma u(x)|^{2}+|y|^{2}+t^{2}=1$, by Lemma \ref{lem xyt=1'}, we have
$$
\left|\det D \Phi_r(x, y, t)\right| \ge r^m\left(1+r\left(f^{\beta-1}-|\sigma|-t|\sigma|\right)\right)^n.
$$
Let $s_{0}>0$ be small enough such that $g_{\Sigma}+sA>0$ for all $s\in (0, s_{0})$. Then,
\begin{align*}
\operatorname{tr} Q(s) \le \frac{m}{s}+\sum_{i=1}^n \frac{\lambda_i}{1+s \lambda_i} \le \frac{m}{s}+\frac{\operatorname{tr} \mathrm{A}}{1+\frac{s}{n} \operatorname{tr} \mathrm{A}} \le \frac{m}{s}+\frac{n\left(f^{\beta-1}-|\sigma|-t|\sigma|\right)}{1+s\left(f^{\beta-1}-|\sigma|-t|\sigma|\right)},
\end{align*}
which gives
\begin{align*}
\det P(s) = \det D\Phi_{s}(x, y, t) \le r^m\left(1+r\left(f^{\beta-1}-|\sigma|-t|\sigma|\right)\right)^n.
\end{align*}
Therefore,
\begin{align*}
\det P(s) = \det D\Phi_{s}(x, y, t) = r^m\left(1+r\left(f^{\beta-1}-|\sigma|-t|\sigma|\right)\right)^n,
\end{align*}
and hence
\begin{align*}
\operatorname{tr} Q(s) = \frac{m}{s}+\sum_{i=1}^n \frac{\lambda_i}{1+s \lambda_i} = \frac{m}{s}+\frac{\operatorname{tr} \mathrm{A}}{1+\frac{s}{n} \operatorname{tr} \mathrm{A}} = \frac{m}{s}+\frac{n\left(f^{\beta-1}-|\sigma|-t|\sigma|\right)}{1+s\left(f^{\beta-1}-|\sigma|-t|\sigma|\right)}.
\end{align*}
These equalities imply that $-\frac{1}{f}\left|\nabla^{\Sigma} f\right|-\frac{1}{f}\left\langle\nabla^{\Sigma} f, \nabla^\Sigma u\right\rangle=0$ from the proof of Lemma \ref{lem 2.4'} and all eigenvalues of $A$ are $f^{\beta-1}-|\sigma|-t|\sigma|$. Note that $|\nabla^\Sigma u|<1$ in $\Omega$, we have that $f$ is constant in $\Omega$, and therefore $f$ is constant in $\Sigma$ for $\Omega$ is dense in $\Sigma$. In addition, from the proof of Lemma \ref{2.4'}, the equality \eqref{2.15'} implies that $-1=-\sqrt{1-|\nabla^\Sigma u|^{2}}$ for the lower bound of $t$. Therefore, $\nabla^\Sigma u(x)=0$ and $\Sigma$ has no boundary, otherwise it contradicts the boundary condition $\left\langle\nabla^{\Sigma} u, \eta\right\rangle=1$ on $\partial \Sigma$. Recall that $u$ is the solution to
\begin{align*}
\operatorname{div}_{\Sigma}\left(f \nabla^{\Sigma} u\right)=n f^\beta-\left|\nabla^{\Sigma} f\right|-f|H| \text { on } \Sigma,
\end{align*}
it follows that $|H|=nf^{\beta-1}$ is also constant. Since
$$
A=D_{\Sigma}^2 u(x)-\langle\mathrm{II}(x), y\rangle-t\left\langle\mathrm{II}(x), \frac{H}{|H|}\right\rangle = \left(f(x)^{\beta-1}-|\sigma(x)|-t|\sigma(x)|\right)g_{\Sigma},
$$
for arbitrary $x, y, t$ satisfying $|\nabla^\Sigma u(x)|^{2}+|y|^{2}+t^{2}=1$,
we conclude that $\Sigma$ is umbilical. From this we obtain
\begin{align*}
\frac{|H|}{n}=\left(\frac{\theta|\mathbb S^n|}{|\Sigma|}\right)^{\frac{1}{n}}.
\end{align*}
\end{proof}

\end{document}